\newcommand{\leqs}{\leqslant}
\newcommand{\geqs}{\geqslant}
\newcommand{\soc}{\operatorname{soc}}
\newcommand{\Aut}{\operatorname{Aut}}
\newcommand{\vs}{\vspace{3mm}}
\newcommand{\Syl}{{\mathrm {Syl}}}
\newcommand{\LL}{\mathrm {L}}
\newcommand{\PGU}{\mathrm {PGU}}
\newcommand{\PSL}{\mathrm {L}}
\newcommand{\SL}{\mathrm {SL}}
\newcommand{\GL}{\mathrm {GL}}
\newcommand{\PGL}{\mathrm {PGL}}
\newcommand{\Irr}{\mathrm {Irr}}
\newcommand{\Alt}{\mathrm {A}}
\newcommand{\UU}{\mathrm {U}}
\newcommand{\PP}{\mathcal {P}}
\newcommand{\imod}[1]{\allowbreak\mkern4mu({\operator@font mod}\,\,#1)}
\renewcommand{\mod}[1]{\imod{#1}}
\renewcommand{\leq}{\leqs}
\renewcommand{\geq}{\geqs}
\newtheorem{theorem}{Theorem} 
\newtheorem{conjecture}{Conjecture}
\newtheorem{corol}[theorem]{Corollary}
\newtheorem{thm}{Theorem}[section] 
\newtheorem{prop}[thm]{Proposition} 
\newtheorem{lem}[thm]{Lemma}
\newtheorem{cor}[thm]{Corollary}
\theoremstyle{definition}
\newtheorem{rem}[thm]{Remark}
\newtheorem{ex}[thm]{Example}
\newtheorem*{problem}{Problem}
\begin{document}
\title[Weakly subnormal subgroups and the Baer-Suzuki theorem]{Weakly subnormal subgroups and variations of the Baer-Suzuki theorem}

\author{Robert M. Guralnick}
\address{R.M. Guralnick, Department of Mathematics, University of Southern California, Los Angeles, CA 90089-2532, USA}
\email{guralnic@usc.edu}

\author{Hung P. Tong-Viet}
\address{H.P. Tong-Viet, Department of Mathematics and Statistics, Binghamton University, Binghamton, NY 13902-6000, USA}
\email{htongvie@binghamton.edu}

\author[G. Tracey]{Gareth Tracey}
\address{G. Tracey, Mathematics Institute, University of Warwick, Coventry, CV4 7AL, UK}
\email{gareth.tracey@warwick.ac.uk}

\renewcommand{\shortauthors}{Guralnick, Tong-Viet, Tracey}

\begin{abstract}
A subgroup $R$ of a finite group $G$ is weakly subnormal in $G$ if $R$ is  not subnormal in $G$ but it is subnormal in every proper overgroup of $R$ in $G$. In this paper, we first classify all finite groups $G$  which contains a weakly subnormal $p$-subgroup for some prime $p$. We then determine all  finite groups containing a cyclic weakly subnormal  $p$-subgroup. As applications, we prove a number of variations of the Baer-Suzuki theorem using the orders of certain group elements. 
\end{abstract}

\date{\today}

\maketitle

\setcounter{tocdepth}{1}
\tableofcontents

\section{Introduction}\label{s:intro}

Let $G$ be a finite group and let $p$ be a prime divisor of the order of $G$. A subgroup $R$ of $G$ is weakly subnormal in $G$ if $R$ is not subnormal in $G$ but $R$ is subnormal in every proper overgroup of $R$ in $G$. The first main goal of this paper is to determine the structure of all finite groups $G$ containing a weakly subnormal $p$-subgroup $R$. 
Note that if $R$ is a $p$-group, then $R$ is weakly subnormal in $G$ if and only if $RO_p(G)$ is weakly subnormal in $G$ if and only if $RO_p(G)/O_p(G)$ is weakly subnormal in $G/O_p(G)$.  So we will generally
assume that $O_p(G)=1$.  Wielandt's Zipper Lemma  implies that if $R$ is weakly subnormal in $G$, then $R$ is contained in a unique maximal subgroup $M$ and if $R$ is a $p$-group,  then $R \le O_p(M)$.  Moreover,
$M$ must be self-normalizing or $O_p(M)$ would be normal in $G$.  

We will essentially classify all possibilities of weakly subnormal $p$-subgroups of finite groups, showing that there are very significant restrictions on them.   Our results depend on  recent papers  \cite{BBGH} and \cite{GT} considering when a Sylow subgroup is contained in a unique maximal subgroup or a cyclic subgroup
is contained in a unique maximal subgroup.   

Before stating our main theorems, we fix some standard notation. For an element $g$ of a group $G$, we will write $o(g)$ for the order of $g$. We will write $\Phi(G)$, $F(G)$, $E(G)$, and $F^*(G)$ for the Frattini subgroup, Fitting subgroup, layer, and generalized Fitting subgroup of $G$, respectively. 

Our first theorem classifies the easy case of weakly subnormal $p$-subgroups: the case where $G$ is $p$-solvable. Recall that a $p$-group $P$ is \emph{special} if it is either elementary abelian, or satisfies $\Phi(P)=[P,P]=Z(P)$.
\begin{theorem} \label{t:psolvable}   Let $p$ be a prime and let $G$ be a finite $p$-solvable group with $O_p(G)=1$.  If $R$ is a weakly subnormal $p$-subgroup of $G$,  
then $G=QR$ where $Q$ is a special normal $q$-subgroup of $G$ for some prime $q \ne p$,  $R$ centralizes $\Phi(Q)$, and $R$ acts irreducibly on  $Q/\Phi(Q)$. In particular, $G$ is solvable
and $R$ is a Sylow $p$-subgroup of $G$.
\end{theorem}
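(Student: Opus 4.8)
The plan is to exploit the Zipper-Lemma data: the unique maximal overgroup $M \ge R$ with $R \le O_p(M)$ and $M = N_G(M)$. First I would record two consequences of weak subnormality. Since $R$ is subnormal in every proper overgroup but not in $G$, its normal closure cannot be proper: if $R^G < G$ then $R^G \le M$ is a proper overgroup, so $R \trianglelefteq\trianglelefteq R^G \trianglelefteq G$ and $R$ is subnormal, a contradiction; hence $R^G = G$. Next, as $O_p(G)=1$ and $G$ is $p$-solvable, every minimal normal subgroup is a $p'$-group, so $N := O_{p'}(G) \neq 1$. To get $G = NR$, suppose $NR$ is proper; then $R \le O_p(NR)$, and since $O_p(NR) \cap N = 1$ we get $[O_p(NR),N]=1$, so $R \le C_G(N)$. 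As $C_G(N) \trianglelefteq G$ contains $R^G = G$, this forces $N \le Z(G)$; but then in $G/N$ (which is $p$-solvable with $O_{p'}(G/N)=1$, hence $O_p(G/N)\neq 1$) the preimage of $O_p(G/N)$ splits as $N \times P_0$ with $1\neq P_0$ characteristic, giving $1\neq P_0 \le O_p(G)$, a contradiction. Thus $G=NR$, and since $R\cap N=1$, $R$ is a Sylow $p$-subgroup.

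The same mechanism yields the structural backbone. Because $R$ is Sylow and $R \le O_p(M)$, while $O_p(M)$ lies in every Sylow $p$-subgroup of $M$, we get $R = O_p(M)$, so $R \trianglelefteq M$ and $M = N_G(R)$. Writing $C = C_G(N)$, the quotient $C/Z(N)$ embeds in the $p$-group $G/N$ and $Z(N)\le Z(C)$, so $C = Z(N)\times O_p(C)$; then $O_p(C)\trianglelefteq G$ gives $O_p(C)=1$, i.e.\ $C_G(N)=Z(N)$, so $F^*(G)=N$ and $R$ acts faithfully on $N$. I would then prove the key \emph{dichotomy}: every $G$-invariant $K$ with $1\neq K \le N$ satisfies $K=N$ or $K\le Z(G)$. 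Indeed, if $KR$ is proper then $KR \le M = N_G(R)$, so $K$ normalizes $R$, whence $[K,R]\le K\cap R = 1$ and $K \le C_G(R)$ forces $K \le Z(G)$ (using $R^G=G$); and $KR=G$ forces $K=N$ by Dedekind.

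The main obstacle is eliminating the layer. Applying the dichotomy to $E(G)$: being nonabelian when nontrivial, $E(G)$ cannot lie in $Z(G)$, so $E(G)\neq 1$ would force $N = E(G)$, a central product of quasisimple $p'$-groups with $Z(N)=Z(G)$ the largest proper $G$-invariant subgroup, so $N/Z(N)\cong S^t$ is a $G$-chief factor for a nonabelian simple $p'$-group $S$. Since $R$ is not subnormal, Wielandt's criterion supplies $g$ with $\langle R, R^g\rangle = G$, so $G/Z(N)$ is generated by two Sylow $p$-subgroups and, via the unique maximal subgroup, a Sylow $p$-subgroup of this almost-simple-type group lies in a unique maximal subgroup. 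I would obtain a contradiction from this configuration using Schreier's conjecture (so that $R$ is governed by the field/graph/diagonal structure of $\Out(S)$ and by the transitive action on the $t=p^a$ factors) together with the classification of Sylow subgroups contained in a unique maximal subgroup in \cite{BBGH} and \cite{GT}. This is the hardest step and the one genuinely requiring the cited input; its conclusion is $E(G)=1$.

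With $E(G)=1$, $N = F^*(G)=F(G)=\prod_q O_q(G)$ is nilpotent. By the dichotomy each $O_q(G)$ is $N$ or central; if none equals $N$ then $N=F(G)\le Z(G)$, whence $G = N\times R$ and $R\trianglelefteq G$, contradicting non-subnormality. So $N = O_q(G)=:Q$ is a $q$-group, $G=QR$ is solvable, and $R$ is Sylow. For the special structure I would use coprime action: $[Q,Q], \Phi(Q)$ (and $Z(Q)$ when $Q$ is nonabelian) are proper $G$-invariant, hence lie in $Z(G)$, and with $Z(G)\le C_G(Q)=Z(Q)$ this gives $Z(Q)=Z(G)$ and that $R$ centralizes $\Phi(Q)\le Z(Q)$. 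The $R$-invariant subgroups of $V:=Q/\Phi(Q)$ are images of $G$-invariant subgroups between $\Phi(Q)$ and $Q$, so by the dichotomy and Maschke's theorem (as $q\neq p$) the only candidate proper submodule is $Z(Q)/\Phi(Q)$; since $[Q,R]=Q$ (otherwise $R$ centralizes $Q$ and is trivial), $C_V(R)=0$, which forces $Z(Q)=\Phi(Q)$ and irreducibility of $V$. Finally the $R$-equivariant $q$-power map $Q/[Q,Q]\to \Phi(Q)/[Q,Q]$ lands in a trivial module and $Q/[Q,Q]=[Q/[Q,Q],R]$, so it vanishes, giving $Q^q\le[Q,Q]$ and hence $[Q,Q]=\Phi(Q)=Z(Q)$. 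Thus $Q$ is special (elementary abelian when $Q$ is abelian), $R$ centralizes $\Phi(Q)$ and acts irreducibly on $Q/\Phi(Q)$, completing the proof.
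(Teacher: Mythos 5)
There is one genuine gap, and it sits exactly where you flag ``the hardest step'': the elimination of $E(G)$. Having reduced to $N=E(G)$ with $N/Z(N)\cong S^t$, you do not derive a contradiction; you only announce that one ``would'' follow from Schreier's conjecture together with the classifications in \cite{BBGH} and \cite{GT}. Those results, however, concern Sylow (or cyclic) subgroups of almost simple groups contained in a unique maximal subgroup; your configuration has $t$ possibly greater than $1$ (so $G/Z(N)$ is not almost simple), $R$ is an arbitrary weakly subnormal $p$-subgroup, and the reduction to the cited settings is precisely what is missing. Worse, the appeal is disproportionate: Theorem \ref{t:psolvable} is elementary in the paper, proved with no CFSG-type input at all, so resting its key step on \cite{BBGH} and \cite{GT} (which drive the much harder Theorem \ref{t:main1}) gets the logical architecture wrong even if the reduction could be pushed through.

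The gap closes with the same coprime-commutator trick you use elsewhere, and this is in essence the paper's entire proof (Lemma \ref{l:r-subs}(iv) plus Bender's theorem). In your setup $R$ acts coprimely and faithfully on $N$, so there is a prime $q$ and an $R$-invariant Sylow $q$-subgroup $D$ of $N$ with $K:=[D,R]\neq 1$; since $D$ is $R$-invariant, $K\le D$ is a $q$-group. Coprime action gives $[K,R]=[D,R,R]=[D,R]=K\neq 1$, so $R$ is not subnormal in $KR$ (a subnormal $p$-subgroup would lie in $O_p(KR)$, which centralizes the normal $q$-subgroup $K$), and weak subnormality forces $G=KR$. Then $N=K$ is a $q$-group, which is incompatible with $N=E(G)\neq 1$ -- and simultaneously it hands you $G=QR$ with $Q$ a normal $q$-group directly, making the separate treatment of $F(G)$ and much of your preparation (the $G=NR$ step, $M=N_G(R)$, the dichotomy lemma) unnecessary. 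The paper runs exactly this: Bender's theorem shows $O_{p'}(G)\supseteq F^*(G)$ is noncentral, the commutator argument yields $G=QR$ in one stroke, and weak subnormality makes $R$ centralize every proper $R$-invariant subgroup of $Q$, giving irreducibility on $Q/\Phi(Q)$. Your remaining steps -- the normal closure, $G=NR$ with $R$ Sylow, the dichotomy, and the closing analysis of $Q$ (the complete-reducibility argument for irreducibility and the $R$-equivariant $q$-power map giving $Q^q\le[Q,Q]$, hence $Q$ elementary abelian or special) -- are all correct, just longer than the paper's route.
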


The analysis of the case when $G$ is not $p$-solvable is more intricate.  Here is the main theorem.   

 \begin{theorem} \label{t:main1}  Let $p$ be a prime, let $G$ be a finite group  with $O_p(G)=1$, and assume that $G$ is not $p$-solvable.
Let $R$ be a weakly subnormal $p$-subgroup of $G$.    Then either $E:=F^*(G)$ is quasisimple; or $p=2$ and either $E$ is a minimal
normal subgroup \emph{(}and so $Z(E)=\Phi(G)=1$\emph{)}, or $E(G)$ has a center of order $3$ and is a central product of copies of $3\cdot A_6$.  Moreover, one of the following holds:

\begin{itemize}  
\item[{\rm (i)}]     $G$ is  quasisimple and $G/Z(G)$ is recorded in  \cite[Table E]{BBGH}. 
\item[{\rm (ii)}] $p=5$ and $G={^2}{\rm B}_2(32).5$.
\item [{\rm(iii)}]  $p=3$,  and $G$ is one of  $\LL_2(8).3$ or $\UU_3(8) < G \le {\PGU}_3(8).3$; or 
\item[{\rm (iv)}] $p=2$ and  $G= \PGL_2(q)$ with $q \ge 7$ a Fermat or Mersenne prime or $q=9$.
\item[{\rm(v)}]  $p=2$ and $G= \LL_3(4).2_3$, ${\rm M}_{10}$ or $\mathrm{Aut}(\Alt_6)$.
\item[{\rm(vi}]   $p=2$ and $G = \LL_2(q).2^2$ or $\LL_2(q).2_3$ with $q = 81$ or $q=r^2$ with $r \ge 5$ a Fermat prime \emph{(}the nonsplit extension\emph{)}.
\item[{\rm(vii)}]  $p=2$ and  $G=\LL_2(q)$ or $\PGL_2(q)$ with $q$ a prime and $q \equiv -1 \pmod 8$ and $|R| \ge 8$.
\item[{\rm(viii)}] $p=2$ and   $G=\LL_3(3).2$.
\item [{\rm(ix)}]  $p=2$,  $G=E(G)R$ and $E(G)  = T_1 \times \ldots \times T_t, t > 1$ is a minimal normal subgroup 
and if $T = T_1$, then $N_G(T)/C_G(T)$ has a maximal Sylow $2$-subgroup and  $N_G(T)/C_G(T)$ is isomorphic to one of
\[
{\rm PGL}_2(7), \; {\rm PGL}_2(9), \; {\rm M}_{10}, \; {\rm L}_{2}(9).2^2, \; {\rm L}_2(q),\; {\rm PGL}_2(q),
\]
where $q >7$ is a Fermat or Mersenne prime.
\item[{\rm(x)}]  $p=2$, $G=E(G)R$ and $E(G)$ is a central product of triple covers of $\Alt_6 = \LL_2(9)$,  $E(G)$ has a center
of order $3$ and if $T$ is a component of $G$, then $N_G(T)/C_G(T) = {\rm {M}}_{10}$.  
\end{itemize} 
\end{theorem}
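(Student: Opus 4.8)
The plan is to squeeze out of weak subnormality the two facts that drive everything---namely that $G$ is generated by conjugates of $R$ and that a full Sylow $p$-subgroup lies in a unique maximal subgroup---then reduce to an almost simple (or product-of-almost-simple) configuration, and finally feed this into the classification of \cite{BBGH} and \cite{GT}. First I would record the elementary consequences. If $N:=\langle R^G\rangle$ were proper, then $R$ would be subnormal in $N$ (a proper overgroup) with $N\trianglelefteq G$, hence subnormal in $G$, a contradiction; so $G=\langle R^G\rangle$. Since $O_p(G)=1$, the group $F(G)$ is a $p'$-group, and (as $G$ is not $p$-solvable) $RF(G)<G$ whenever $F(G)\neq 1$; weak subnormality then gives $R\le O_p(RF(G))$, whence $[R,F(G)]\le[O_p(RF(G)),O_{p'}(RF(G))]=1$. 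Thus $R\le C_G(F(G))$, and because $C_G(F(G))\trianglelefteq G$ and $G=\langle R^G\rangle$ we conclude $F(G)\le Z(G)$; as $Z(G)$ is a $p'$-group this forces $F(G)=Z(G)=C_G(F^*(G))$.

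Next I would apply the same $O_p$-trick to $E(G)R$. If this were proper, then $R\le O_p(E(G)R)$ would give $[E(G),R]\le O_p(E(G)R)\cap E(G)\le O_p(E(G))\le Z(E(G))$, so $R$ acts trivially on $E(G)/Z(E(G))$; since the components are perfect this makes $R$ centralise $E(G)$, placing $R$ in the $p'$-group $C_G(E(G))=Z(G)$ and forcing $R=1$, which is absurd. Hence $G=E(G)R$. Consequently $G/E(G)$ is a $p$-group, so the $p'$-group $Z(G)$ lies in $E(G)$; this yields $F^*(G)=E(G)$ and shows $R$ acts faithfully on $E(G)$. To realise the hypothesis of \cite{BBGH}, I would pick $P\in\Syl_p(G)$ with $R\le P$: then $N_G(P)$ is proper (otherwise $P\le O_p(G)=1$) and contains $R$, so lies in the unique maximal overgroup $M$ of $R$; as every maximal subgroup containing $P$ contains $R$, it follows that $P$ lies in a unique maximal subgroup of $G$, exactly the situation classified in \cite{BBGH}. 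A further run of the $O_p$-argument, applied to $RE_1$ and $RE_2$ where $E_1,E_2$ are the products of components over two $G$-orbits, shows that intransitivity would again force $[E(G),R]\le Z(E(G))$ and hence $R=1$; so $G$ permutes the components $T_1*\cdots*T_t$ transitively. For $t=1$ this gives $F^*(G)=E(G)=T_1$ quasisimple with $C_G(T_1)=Z(G)=Z(T_1)$, so $G/Z(G)$ is almost simple, the setting of (i)--(viii).

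With $P$ in a unique maximal subgroup I would then invoke \cite[Table E]{BBGH} (and \cite{GT} at the points where $R$ may be taken cyclic, e.g. the $\PGL_2(q)$ and $\LL_2(q)$ families) to pin down the type of $T_1/Z(T_1)$ and of the almost simple group $L:=N_G(T_1)/C_G(T_1)$, and in each surviving group verify directly both that a suitable $R\le O_p(M)$ is weakly subnormal \emph{and}, crucially, that it is genuinely not subnormal. For $t=1$ this produces (i)--(viii), including the exceptional entries ${}^2\mathrm{B}_2(32).5$, $\LL_2(8).3$, the groups over $\mathbb{F}_8$ with $\UU_3(8)<G\le\PGU_3(8).3$, $\LL_3(3).2$, and the $\PGL_2(q)$ and $\LL_2(q).2^2$ families. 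For $t\ge2$, the possibilities for $L$ read off from Table E, combined with the requirement that the single $p$-group $R$ both induce the prescribed outer action on each component and transitively permute the $t$ blocks while remaining non-subnormal, force $p=2$ and the short list of $L$ in (ix); the triple cover $3\cdot\Alt_6$ (so $Z(E(G))=C_3$) arises precisely when $L=\mathrm{M}_{10}$, giving (x), while $\Phi(G)=1$ in the minimal-normal case follows from $\Phi(G)\le F(G)=Z(G)=Z(E(G))=1$.

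The main obstacle is the last paragraph: the reduction machinery above is short, but the real content is a long, CFSG-dependent case analysis resting on \cite{BBGH} and \cite{GT}. Two points are most delicate. First, one must track Schur multipliers and graph/field automorphisms carefully enough to separate the quasisimple, trivial-centre, and $3\cdot\Alt_6$ cases and to obtain the precise split/non-split extensions in (vi); a mismatch there would mis-sort entries between (i), (ix), and (x). Second, in the multi-component case the decisive and most laborious verification is that a $2$-group $R$ arranged to act as one of the listed $L$ on each block and to permute the blocks transitively is weakly subnormal \emph{exactly} for $p=2$ and these $L$: here one must confirm not only subnormality in every proper overgroup but also the \emph{failure} of subnormality in $G$, and it is this failure that eliminates every odd prime and every almost simple $L$ outside the stated list.
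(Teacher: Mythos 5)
Your reduction is sound and runs parallel to the paper's Lemmas \ref{l:r-subs} and \ref{l:E(G)}: you obtain $G=\langle R^G\rangle$, $F(G)=Z(G)$, $G=E(G)R$ with $R$ transitive on the components, and a Sylow $p$-subgroup lying in a unique maximal subgroup. (Your ``$R\le O_p(H)$ for proper overgroups $H$'' argument combined with the three-subgroups lemma is a clean alternative to the paper's Frattini argument, and the quasisimple case is then handled exactly as in the paper, by passing to $G/\Phi(G)$ and quoting \cite{BBGH}.) The gap is in the multi-component case, and it is a genuine missing idea, not a matter of laborious verification. First, ``reading the possibilities for $L=N_G(T)/C_G(T)$ off \cite[Table E]{BBGH}'' cannot work: that table records simple groups $X/Z(X)$ for $X$ quasisimple, whereas the groups occurring in (ix) --- ${\rm PGL}_2(7)$, ${\rm PGL}_2(9)$, ${\rm M}_{10}$, ${\rm L}_2(9).2^2$ --- are almost simple but not simple, so they cannot literally appear there; the condition that actually characterizes them in (ix) is that their Sylow $2$-subgroup is a \emph{maximal} subgroup, which is not what Table E classifies.

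Second, and more importantly, you give no mechanism for excluding odd primes when $t\ge 2$ or for pinning the components down to (covers of) $\LL_2(9)$ and $\LL_2(r)$, and the mechanism you gesture at is backwards. You write that it is the \emph{failure} of subnormality in $G$ that eliminates every odd prime; but non-subnormality of $R$ in $G=E(G)R$ holds for \emph{any} multi-component configuration (this is the paper's Lemma \ref{l:wreath}: a $p$-group transitively permuting $t>1$ factors that are not $p$-groups is never subnormal in the product), so it cannot distinguish $p=2$ from odd $p$. What does the work is the positive half of weak subnormality: $R$ must be subnormal in every proper overgroup. The paper exploits this by taking an $R$-invariant Sylow $p$-subgroup $D$ of $E(G)$ and considering the proper overgroup $N_{E(G)}(D)R$ (proper because $N_G(D)<G$): Lemma \ref{l:wreath} then forces $N_{E(G)}(D)=D$, i.e.\ every component must have a self-normalizing Sylow $p$-subgroup. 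For odd $p$ this is impossible in any finite simple group by \cite[Corollary 1.2]{GMN} (Lemma \ref{l:selfnormal}), which is precisely what kills the odd primes; for $p=2$ the same lemma, refined by the existence (outside the exceptional groups) of an involution $z$ with $C_{E(G)}(z)$ not a $2$-group and the resulting proper overgroup $C_{E(G)}(z)R$, forces the components to be $\LL_2(9)$, $\LL_2(r)$ with $r>5$ a Fermat or Mersenne prime, or the triple cover $3\cdot\Alt_6$, yielding (ix) and (x). Without this input --- the Guralnick--Malle--Navarro theorem on Sylow normalizers together with the wreath lemma --- your case-by-case verification has no finite procedure behind it.
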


\begin{rem} 
\ From the previous theorems, one obtains the classification of maximal weakly subnormal $p$-subgroups.  If $R$ is a weakly subnormal
$p$-subgroup and $M$ is the unique maximal subgroup containing $R$, then $M$ is the only maximal subgroup containing $O_p(M)$
and $R \le O_p(M)$. Thus, $O_p(M)$ is the unique  (up to conjugacy) maximal weakly subnormal $p$-subgroup of $G$.    In all cases with
$p \ne 2$,  $O_p(M)$ is a Sylow $p$-subgroup of $G$ (and this is true in many but not all cases with $p=2$ as well).  
\end{rem}

For applications, we need the classification of cyclic weakly subnormal $p$-subgroups.   As usual, we assume that $O_p(G)=1$.   If $G$ is $p$-solvable, then the classification is given
in Theorem \ref{t:psolvable} (in that case  Sylow $p$-subgroups are the only such examples), and so we assume this is not the case.  

Suppose that $R$ is a cyclic weakly subnormal $p$-subgroup of such a group $G$. Then $R$ is contained in a unique maximal subgroup $M$ and moreover, $R \le O_p(M)$.  If $P$ is a Sylow $p$-subgroup
containing $R$, then $M$ is the only maximal subgroup containing $P$.  Conversely, if $R \le O_p(M)$ is cyclic and $M$ is the unique maximal subgroup containing $R$ (and $M$ is not normal
in $G$), then $R$ is weakly
subnormal.  Thus, one just has to check the cases in Theorem \ref{t:main1}, consider elements $x$ in $O_p(M)$, and check to see that $x$ is contained
in no other maximal subgroups.  Another approach is to use the results of \cite{GT} where there is a classification of cyclic subgroups contained in a unique maximal
subgroup and check to see if they are contained in $O_p(M)$.

Note that for $p$ odd, in all but two cases (one each for $p=3$ or $5$),  the group is quasisimple; the Sylow $p$-subgroup is cyclic; and the  unique maximal subgroup
is the normalizer of the Sylow $p$-subgroup.   Moreover, if $G$ happens to be a quasisimple finite group of Lie type with $p$ odd, the elements are either regular semisimple
or unipotent.  The only cases where $R$ is unipotent is if $G= \LL_2(p)$ or $SL_2(p)$ with $p \ge 5$.   If $R$ consists of semisimple elements, then it cannot be contained
in a proper parabolic subgroup since then it would be conjugate to a subgroup of a Levi subgroup and so would be contained in at least two parabolic subgroups.   In
particular, $R$ is generated by a regular semisimple element.

\begin{theorem} \label{t:cyclic}    Let $G$ be a finite group and $p$ be a prime with $O_p(G)=1$.  Assume that $G$ is not $p$-solvable and $R$ is a cyclic weakly subnormal $p$-subgroup.
Let $M$ denote the unique maximal subgroup of $G$ containing $R$. 
Then one of the following holds:
\begin{enumerate}[{\rm(i)}]
    \item  $G$ is quasisimple, a Sylow $p$-subgroup of $G$ is cyclic, $R$ is any nontrivial $p$-subgroup, $M = N_G(R)$, and $(G/Z(G),M/Z(G))$ is given in Table \ref{tab:thm3}.
\item   $p=5$,  $G={^2}{\rm B}_2(32).5$,  $R$ is any cyclic subgroup of order $25$ not contained in the socle, and $M$ is the normalizer of a nonsplit torus of order $25$.
\item $p=3$, $G=\LL_2(8).3$,  $R$ is any cyclic subgroup of order $9$ not contained in the socle, and $M$ is the normalizer of a nonsplit torus.
\item $p=2$, $G={\rm M}_{10}$, $R$ is any group of order $8$ not contained in the socle, and $M$ is a Sylow $2$-subgroup.
\item $p=2$,  $G=\LL_2(q)$ or $\PGL_2(q)$, $M$ is the normalizer of a nonsplit torus, $q$ is prime, $q \equiv -1 \pmod 8$, and  $|R| \ge 8$.
\item  $p=2$,  $G=E(G)R$ and $E(G)  = T_1 \times \ldots \times T_t, t > 1$ is a minimal normal subgroup 
and if $T = T_1$, then $N_G(T)/C_G(T)$ has a maximal Sylow $2$-subgroup and  $N_G(T)/C_G(T)$ is isomorphic to one of
\[
{\rm PGL}_2(7), \; {\rm M}_{10},  \; {\rm L}_2(q),\; {\rm PGL}_2(q),
\]
where $q >7$ is a Mersenne prime.
\item  $p=2$, $G=E(G)R$ and $E(G)$ is a central product of triple covers of $\Alt_6 = \LL_2(9)$,  $E(G)$ has a center
of order $3$ and if $T$ is a component of $G$, then $N_G(T)/C_G(T) = {\rm M}_{10}$.  
\end{enumerate} 
\end{theorem}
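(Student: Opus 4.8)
The plan is to read off Theorem \ref{t:cyclic} from Theorem \ref{t:main1} by specializing to the case that $R$ is cyclic. A cyclic weakly subnormal $p$-subgroup is in particular weakly subnormal, so $G$ must already occur in the list of Theorem \ref{t:main1}; conversely, by the criterion recorded just before the statement, a cyclic subgroup $R \le O_p(M)$ is weakly subnormal precisely when the (non-normal) maximal subgroup $M$ is the unique maximal subgroup containing $R$. Thus the entire problem reduces to a bookkeeping task: for each pair $(G,M)$ produced by Theorem \ref{t:main1}, decide which cyclic subgroups of $O_p(M)$ lie in no \emph{other} maximal subgroup, record the survivors, and discard those cases of \ref{t:main1} admitting no cyclic example. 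I would drive the verification with the classification of cyclic subgroups contained in a unique maximal subgroup from \cite{GT}, intersecting that list with the constraint $R \le O_p(M)$, and cross-checking against \cite[Table E]{BBGH} for the quasisimple cases.

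For $p$ odd the work is light, exactly as flagged before the statement. In the quasisimple cases of Theorem \ref{t:main1}(i) I would first show the Sylow $p$-subgroup $P$ is cyclic (so every nontrivial $p$-subgroup $R$ is cyclic), then verify $N_G(R)=N_G(P)=M$ and that this is the unique maximal overgroup, giving case (i) with Table \ref{tab:thm3}; here the observation that a semisimple $R$ cannot lie in a proper parabolic (else it meets two parabolics) forces $R$ to be generated by a regular semisimple element, pinning down $M$ as a torus normalizer. The two non-quasisimple odd cases \ref{t:main1}(ii),(iii) I would treat by hand: for ${}^2{\rm B}_2(32).5$ exhibit the cyclic subgroups of order $25$ outside $\soc(G)$ and check they lie only in the normalizer of the nonsplit torus of order $25$, yielding (ii); likewise for $\LL_2(8).3$ the order-$9$ cyclic subgroups outside the socle, yielding (iii). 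The remaining possibility $\UU_3(8) < G \le \PGU_3(8).3$ must be eliminated by showing the relevant $O_3(M)$ is a noncyclic $3$-group whose every cyclic subgroup lies in more than one maximal subgroup, so no cyclic weakly subnormal subgroup exists.

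For $p=2$ I would go through the almost simple cases \ref{t:main1}(iv)--(viii) one at a time, analyzing the typically nonabelian $O_2(M)$ (dihedral, semidihedral, or larger $2$-groups). The crucial phenomenon is the Fermat/Mersenne dichotomy in $\LL_2(q)$ and $\PGL_2(q)$: when $q\equiv -1\pmod 8$ (the Mersenne-type case) the nonsplit torus normalizer hosts cyclic subgroups of order $\ge 8$ meeting no other maximal subgroup, producing case (v), whereas in the Fermat case (and the generic $q=9$ case) the minimal weakly subnormal $2$-subgroup is a noncyclic Sylow subgroup, and each of its cyclic subgroups (e.g. the split torus) already lies in a parabolic as well, so these drop out. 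I would separately confirm that ${\rm M}_{10}$ contributes the order-$8$ cyclic subgroups outside the socle (case (iv)), and eliminate $\LL_3(4).2_3$, $\LL_3(3).2$ and $\mathrm{Aut}(\Alt_6)$ by the same ``no cyclic subgroup of $O_2(M)$ is in a unique maximal'' argument.

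Finally, the product cases \ref{t:main1}(ix),(x) I would handle by reducing to the component level: writing $E(G)=T_1\times\dots\times T_t$ and $T=T_1$, a cyclic $R$ with $G=E(G)R$ must project into a cyclic weakly subnormal $2$-subgroup of each $N_G(T)/C_G(T)$, so only those almost simple quotients already shown (in the previous paragraph) to admit a cyclic example can occur; combined with the constraint that $R$ is cyclic across the $t>1$ factors and the transitive $2$-group action permuting them, this forces the Mersenne list in (vi) and the triple-cover-of-$\Alt_6$ configuration with $N_G(T)/C_G(T)={\rm M}_{10}$ in (vii). The main obstacle throughout is the $p=2$ almost simple analysis: since $O_2(M)$ is generally nonabelian, one must track every maximal overgroup of each cyclic subgroup---the split torus normalizer, the Borel, and the sporadic small subgroups such as $\Alt_5$, $\mathrm{S}_4$, and subfield subgroups---and it is exactly this inclusion analysis that both exposes the Fermat/Mersenne split and kills the cases absent from the conclusion. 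The product step then amounts to careful bookkeeping, transporting the single-component results through the direct product and its permutation action without introducing new possibilities.
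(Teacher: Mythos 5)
Your proposal is correct and follows essentially the same route as the paper: reduce to the list in Theorem \ref{t:main1} via the weak-subnormality criterion, then use the classification in \cite{GT} together with \cite[Table E]{BBGH} (and direct computation for the small groups, which the paper does with GAP) to decide which cyclic subgroups of $O_p(M)$ lie in a unique maximal subgroup, with the Fermat/Mersenne dichotomy and the defining-characteristic rank-one analysis eliminating exactly the cases absent from the conclusion. The only cosmetic differences are that you spell out the multi-component cases (ix)--(x) explicitly via projection to $N_G(T)/C_G(T)$, which the paper leaves implicit, and you propose hand verification where the paper invokes GAP.
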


\begin{corol}\label{c:cyclic34}
Let $G$ be a finite group, and assume that $G$ has a non-trivial weakly subnormal cyclic subgroup $R$.
\begin{enumerate}
    \item[\rm(i)] If $|R|=2$, then $G$ is dihedral of order $2q$, for an odd prime $q$.
    \item[\rm(ii)] If $|R|=3$, then $G$ is either solvable or $G/O_3(G)\cong \LL_2(2^e)$ with $e$ an odd prime.
    \item[\rm(iii)] If $|R|=4$, then $G$ is solvable.
\end{enumerate}
\end{corol}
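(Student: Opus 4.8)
The plan is to deduce all three parts from Theorems~\ref{t:psolvable} and~\ref{t:cyclic}. In each case $R$ is a cyclic $p$-group, with $p=2$ in (i) and (iii) and $p=3$ in (ii). Since every subgroup of a normal $p$-group is subnormal, $R\not\le O_p(G)$; hence $R\cap O_p(G)$ is a proper subgroup of $R$ and, by the reduction recorded in the introduction, $\bar R:=RO_p(G)/O_p(G)$ is a nontrivial weakly subnormal cyclic $p$-subgroup of $\bar G:=G/O_p(G)$ with $O_p(\bar G)=1$ and $1<|\bar R|=|R|/|R\cap O_p(G)|\le|R|$. As solvability passes through the $p$-group $O_p(G)$ and $G/O_3(G)=\bar G$, it suffices to analyse $\bar G$, splitting according to whether $\bar G$ is $p$-solvable.

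For (i) I would argue directly and bypass the classification. Write $R=\langle t\rangle$ with $t$ an involution; as $R$ is not subnormal, $t\notin O_2(G)$, so by the Baer--Suzuki theorem there is $g\in G$ with $D:=\langle t,t^g\rangle$ not a $2$-group. Being generated by two involutions, $D$ is dihedral of order $2m$ with $m=o(tt^g)$ not a power of $2$, and the reflection $t$ lies outside $O_2(D)$, so $\langle t\rangle$ is not subnormal in $D$. Weak subnormality then forces $D=G$, so $G=\langle t,t^g\rangle$ is dihedral of order $2m$. Finally, each proper overgroup of $t$ in $D_{2m}$ is a dihedral subgroup $D_{2d}$ with $d\mid m$, $d<m$, in which $t$ is subnormal only if $d$ is a power of $2$; thus every proper divisor of $m$ is a $2$-power, which (since $m$ has odd part $>1$) forces $m=q$ to be an odd prime. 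Hence $G=D_{2q}$, and $O_2(G)=1$ is automatic. One could instead reduce to $O_2(G)=1$ and invoke Theorem~\ref{t:cyclic}, but that route additionally requires arguing $O_2(G)=1$, which the Baer--Suzuki argument gives for free.

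For (ii) and (iii) I would use Theorem~\ref{t:cyclic}. If $\bar G$ is $p$-solvable, Theorem~\ref{t:psolvable} shows $\bar G$, hence $G$, is solvable, giving the first alternative of (ii) and the conclusion of (iii); so assume $\bar G$ is not $p$-solvable. For (iii), $p=2$ and $|\bar R|\le 4$: case~(i) is impossible, since a group with a nontrivial cyclic Sylow $2$-subgroup has a normal $2$-complement and so is not perfect, whereas quasisimple groups are perfect; cases~(ii)--(iii) have $p\ne 2$; and in each of (iv)--(vii) the weakly subnormal cyclic subgroup has order at least $8$. Thus no case permits $|\bar R|\le 4$, forcing $\bar G$ to be $2$-solvable, so $G$ is solvable. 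For (ii), $p=3$ and $|\bar R|=3$; among the cases of Theorem~\ref{t:cyclic} only (i) and (iii) involve $p=3$, and (iii) requires $|R|=9$. Hence $\bar G$ is quasisimple with cyclic Sylow $3$-subgroup, and it remains to read off from Table~\ref{tab:thm3} that the only such groups admitting a weakly subnormal subgroup of order $3$ are the $\LL_2(2^e)$ with $e$ an odd prime; then $G/O_3(G)=\bar G\cong\LL_2(2^e)$.

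The main obstacle is the bookkeeping in (ii): one must verify that the only quasisimple entries of Table~\ref{tab:thm3} with $p=3$ for which a subgroup of order $3$ is weakly subnormal are the $\LL_2(2^e)$ with $e$ an odd prime. Here the conditions "$e$ odd" (so $3\mid 2^e+1$ and the Sylow $3$-subgroup lies in a nonsplit torus, meeting no parabolic) and "$e$ prime" (so the torus normalizer is maximal, with no intervening subfield subgroup) are exactly what pins down the list; one checks that no odd-$q$ group $\LL_2(q)$, nor any other candidate, survives, because the relevant $C_3$ would then lie in a second maximal subgroup — a Borel, or a subgroup such as $A_4$, $S_4$ or $A_5$. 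The analogous point for (iii), that cases (vi) and (vii) force $|R|\ge 8$, is comparatively mild: there $N_G(T)/C_G(T)$ is one of the groups occurring in (iv) and (v), whose weakly subnormal cyclic subgroups already have order at least $8$. By contrast, part (i) is essentially self-contained through Baer--Suzuki.
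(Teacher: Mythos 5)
Your parts (i) and (ii) are fine: the Baer--Suzuki argument for $|R|=2$ is correct and self-contained (the paper simply calls this case ``clear''), and for $|R|=3$ your reduction to case (i) of Theorem \ref{t:cyclic} followed by the table analysis matches the paper's argument; the paper phrases the ``$e$ prime'' step slightly differently (if $e$ is composite, all elements of order $3$ of $\LL_2(2^e)$ are conjugate into a subfield subgroup $\LL_2(2^{e/k})$, which gives a second maximal overgroup of $R$), but the substance is the same.

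The gap is in part (iii), at the claim that cases (vi) and (vii) of Theorem \ref{t:cyclic} force $|R|\geq 8$. The statement of Theorem \ref{t:cyclic} imposes no lower bound on $|R|$ in those cases, and your justification --- that $N_G(T)/C_G(T)$ is one of the groups from cases (iv)--(v), ``whose weakly subnormal cyclic subgroups already have order at least $8$'' --- does not transfer: $R=\langle x\rangle$ is not a subgroup of $N_G(T)/C_G(T)$, nor does it induce a weakly subnormal subgroup there. Since $x$ permutes the $t>1$ components in a cycle, one has $o(x)=t\cdot o(z)$, where $z$ is the coordinate product arising from $x^t$ and induces an automorphism of a single component; so $o(x)=4$ is a priori possible with $t=4$ and $z=1$, or with $t=2$ and $z$ inducing an involution of $N_G(T)/C_G(T)$. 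The groups $\PGL_2(7)$, $\LL_2(q)$ and $\PGL_2(q)$ all contain such involutions, so no order count inside $N_G(T)/C_G(T)$ can rule these configurations out (only in case (vii), where every element of ${\rm M}_{10}$ outside $\Alt_6$ has order $8$, does an order argument succeed). Eliminating these configurations is precisely the content of the paper's proof: writing $\overline{x}=(y_1,\hdots,y_t)\sigma$ inside $A\wr\langle\sigma\rangle$, the paper shows that if $y:=\prod_i y_i$ is trivial then $\overline{x}$ may be taken to be $\sigma$, which clearly lies in more than one maximal subgroup, while if $o(y)=2$ then, after conjugating so that $\overline{x}=(y_1,1)\sigma$ with $o(y_1)=2$, the involution $y_1$ normalizes two distinct maximal subgroups $M_1,M_2$ of $\soc(A)$, whence $\overline{x}$ lies in the two distinct maximal subgroups $N_{\overline{G}}(M_1^2)$ and $N_{\overline{G}}(M_2^2)$, contradicting Lemma \ref{l:r-subs}(ii). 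This wreath-product analysis is the bulk of the paper's proof of the corollary, and without it (or a substitute) your part (iii) is incomplete.
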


\begin{rem}
In Table \ref{tab:thm3}, we adopt similar notation to that used in \cite{BBGH}. More precisely, for a finite group $X(q)$ of Lie type, and a positive integer $m$ we will write $q_m$ for an arbitrary primitive prime divisor of $q^m-1$. In the table, we also use $r$ for the prime satisfying $q=r^f$, $f\in\mathbb{N}$. For a prime $p$, we will write $d_r(p)$ for the order of $r$ modulo $p$. We will also write $\mathcal{P}$ for the set of primes of the form $q^m-1/q-1$, with $q$ a prime power, $m\in\mathbb{N}$. Finally, using a slightly modified version of the notation in \cite{BBGH}, we will write $\alpha'(m,\epsilon)$ and $\beta'(m,\epsilon)$ for the conditions:
\begin{description}
    \item[$\alpha'(m,\epsilon)$] $q^{m/k} \not\equiv \epsilon \pmod{|R|}$ for all $k \in\pi(f)$.
    \item[$\beta'(m,\epsilon)$] $q^{m/k} \not\equiv \epsilon \pmod{|R|}$ for all odd primes $k \in\pi(f)$.
\end{description}
Here, $m\in\mathbb{N},\epsilon\in\{\pm 1\}$; $R$ is the weakly subnormal $p$-subgroup in question; and $p$ will be as indicated in the second column of the table.
   \end{rem}

\begin{table}[h!]
    \centering\small
    \renewcommand{\arraystretch}{1.3}
    \begin{tabular}{l l l l}
     $G$    &  $p$ & $M$ & Conditions\\
     \hline
     $\Alt_p$    &  $p$ & $p:\frac{p-1}{2}$ & $p\geq 13$, $p\neq 23$, $p\not\in\mathcal{P}$\\
     $\LL_2(q)$    &  $r$ & $r:\frac{r-1}{2}$ & $q=r$\\
                 &  $q_2$ & $D_{q+1}$ & $f\le 2$, and either $p > 5$, or $|R|>p$, or\\
                 &        &           & $f>2$ and $\alpha'(1,-1)$, or\\
                 &        &           & $f>2$, $(p, r) = (3, 2)$, and\\
    & & & $q^{1/k} \equiv 1 \pmod{|R|}$ for all $k \in\pi(f)-\{f\}$\\
    $\UU_3(q)$     &  $q_6$ & $\frac{1}{(q+1,3)}(q^2-q+ 1):3$ & $\beta'(3,-1)$ and either $f > 1$, or $|R| > p$, or $p > 7$\\
    $\LL_n(q)$     &  $q_n$ & $\frac{q^n-1}{q-1}:n$ & $n > 3$ prime, $\alpha'(n, 1)$ and either $f > 1$ is odd, or\\
    
& & & $f = 1$ and either $|R|>p$, or $p\neq 2n+ 1$, or \\
& & & $-p$ is a non-square modulo $r$\\
$\UU_n(q)$     &  $q_{2n}$ & $\frac{q^n+1}{q+1}:n$ & $n > 3$ prime, $\beta'(n,-1)$ and either $f > 1$, or\\
& & & $f = 1$ and either $|R|>p$, or $p\neq 2n+1$, or \\
& & & $-p$ is a square modulo $r$\\
${}^2{\rm B}_2(q)$    &  $q_4$ &  $q\pm\sqrt{2q}+1$ & $q^{2/k} \not\equiv -1 \pmod{|R|}$ for all odd $k \in\pi(f)-\{f\}$\\ 
${}^2{\rm G}_2(q)$    &  $q_6$ &  $q\pm\sqrt{3q}+1$ & $\alpha'(3,-1)$\\ 
${}^3{\rm D}_4(q)$    &  $q_{12}$ &  $q^4-q^2+1$ & $q^{6/k} \not\equiv -1 \pmod{|R|}$ for all odd $k \in\pi(f)-\{3\}$\\ 
${}^2{\rm F}_4(q)$    &  $q_{12}$ &  $q^2\pm\sqrt{2q^3}+q\pm\sqrt{2q}+1$ & $f\geq 3$ and $\alpha'(6,-1)$\\ 
${\rm E}_8(q)$    &  $q_{15(3-\epsilon)/2}$ &  $q^8-\epsilon q^7+\epsilon q^5-q^4+\epsilon q^3-\epsilon q+1$ & $\alpha'(30,1)$ and either $p>61$, or \\
& & &$|R|>p$, or $|R|=p=61$ and either $f > 2$, or \\
& & &$f = 2$, $i = 15$ and $d_{r}(p)\in \{15, 30\}$\\ 
$\mathrm{M}_{23}$    &  $23$ & $23:11$ & \\
$\mathrm{J}_{1}$    &  $19$ & $19:6$ & \\
$\mathrm{J}_{4}$    &  $29$ & $29:28$ & \\
                    &  $43$ & $43:14$ & \\
$\mathrm{Ly}$    &  $37$ & $37:18$ & \\
                 &  $67$ & $67:22$ & \\
$\mathrm{Fi}_{24}'$    &  $29$ & $29:14$ & \\
$\mathbb{B}$    &  $47$ & $47:23$ & \\
\hline
 \end{tabular}
    \caption{The pairs $(G,M)$ with $G$ a finite simple group containing a cyclic weakly subnormal $p$-subgroup $R$ with $\mathcal{M}(R)=\{M\}$.}
    \label{tab:thm3}
\end{table}

The main motivation for the study of weakly subnormal subgroups is to prove various variations of the Baer-Suzuki theorem. The Baer-Suzuki theorem states that if $p$ is a prime, $x$ is a $p$-element in a finite group $G$, and $\langle x,x^g\rangle$ is a $p$-group for all $g\in G$, then $x\in O_p(G)$. Many variations  of this theorem have been proved over the years (see \cite{GM1,GM2,GR,Wielandt}). In \cite{GR}, Guralnick and Robinson showed that if  $G$ is a finite group and $x\in G$ is an element of order $p$ such that $[x,g]$ is a $p$-element for every $g\in G,$ then $x\in O_p(G)$. Since $[x,g]=x^{-1}x^g\in\langle x,x^g\rangle$, this result (whose proof depends on the classification of finite simple groups) implies the Baer-Suzuki theorem. In fact, Guralnick and Malle \cite[Theorem 1.4]{GM1} prove a stronger result which says that if $x\in G$ is a $p$-element and $CC^{-1}$ consists of only $p$-elements, where $C=x^G$, then $C\subseteq O_p(G)$. They also conjecture that  if $p\neq 5$ is a prime and $C$ is a conjugacy class of $p$-elements in a finite group $G$ with $[c,d]$ a $p$-element for all $c,d\in C$, then $C\subseteq O_p(G)$ (see \cite[Conjecture 1.3]{GM1}).

In our first result, we prove the following variation of the Baer-Suzuki theorem.

\begin{theorem}\label{t:thmA}
Let $G$ be a finite group and let $p$ be a prime. Let $x\in G$ be a  $p$-element. Assume that $[x,g]$ is a $p$-element for every $p'$-element $g\in G$ of prime power order. Then $x\in O_p(G)$.
\end{theorem}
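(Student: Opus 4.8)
My plan is to argue by contradiction through a minimal counterexample, reducing to the point where $\langle x\rangle$ is a cyclic weakly subnormal $p$-subgroup, so that Theorems \ref{t:psolvable} and \ref{t:cyclic} apply. The first observation I would record is that the hypothesis is inherited by every subgroup containing $x$ (a prime-power $p'$-element of a subgroup is one of $G$) and by the quotient $G/O_p(G)$: given a prime-power $p'$-element $\bar g$ of $G/O_p(G)$, its preimage is an extension of the $p$-group $O_p(G)$ by a cyclic $q$-group, so by Schur--Zassenhaus $\bar g$ lifts to a prime-power $p'$-element $g$ of $G$, whence $[\bar x,\bar g]$ is the image of the $p$-element $[x,g]$ and is again a $p$-element. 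Taking $G$ to be a counterexample of least order, minimality then forces $O_p(G)=1$ and $x\neq 1$; moreover, for any proper overgroup $K$ of $\langle x\rangle$ the group $K$ is not a counterexample, so $x\in O_p(K)$ and $\langle x\rangle$ is subnormal in $K$. Since $x\notin O_p(G)=1$, the subgroup $\langle x\rangle$ is not subnormal in $G$, and therefore it is a nontrivial cyclic weakly subnormal $p$-subgroup of a group with $O_p(G)=1$.

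If $G$ is $p$-solvable I expect a quick contradiction. By Theorem \ref{t:psolvable} we would have $G=QR$ with $Q=F^*(G)$ a special normal $q$-subgroup, $R=\langle x\rangle$ a Sylow $p$-subgroup, and $O_p(G)=1$. Because $C_G(Q)\le F^*(G)=Q$ and $R\cap Q=1$, the nontrivial element $x$ cannot centralize $Q$, so there is $g\in Q$ with $[x,g]\neq 1$. As $Q\trianglelefteq G$, the commutator $[x,g]$ lies in $Q$ and is a nontrivial $q$-element, while $g$ has prime power ($q$-)order; this violates the hypothesis. Hence $G$ is not $p$-solvable, and Theorem \ref{t:cyclic} applies.

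For the non-$p$-solvable case the task reduces to exhibiting, in each configuration of Theorem \ref{t:cyclic}, a single prime-power $p'$-element $g$ for which $[x,g]=x^{-1}x^g$ fails to be a $p$-element. The structural lever I would exploit is that $M$ is the \emph{unique} maximal subgroup containing $R$, so any $g\notin M$ satisfies $\langle x,g\rangle=G$, together with $\langle x,x^g\rangle=\langle x,[x,g]\rangle$. In the torus cases $x$ is regular semisimple with $C_G(x)$ equal to the cyclic torus $T\le M$, so every $g\in N_G(T)$ acts on $T$ by a power map and produces a power of $x$ (a $p$-element) as commutator; this shows the bad $g$ must be sought outside $M$. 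The natural candidate is an involution when $p$ is odd (of prime power order $2$) and an odd prime-power element when $p=2$. When $g^2=1$ one checks $u:=x^{-1}x^g$ satisfies $u^g=u^{-1}$, so $g$ inverts $u$, and the plan is to show that for a suitable involution $g\notin M$ the order of $u$ is divisible by a prime different from $p$.

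The hard part will be carrying out this last step uniformly across Table \ref{tab:thm3} and the remaining cases (ii)--(vii). For the generic groups of Lie type I would combine the abundance of involutions (resp. odd prime-power semisimple or unipotent elements) with class-multiplication or trace estimates to force $x^{-1}x^g$ to have mixed order, leaving only finitely many small groups to be dispatched by direct character-theoretic or computational checks. For the product configurations, where $E(G)=T_1\times\cdots\times T_t$ and $G=E(G)R$, I would restrict to a single factor, choosing $g$ inside one $T_i$ so that the computation collapses to the almost simple groups $\mathrm{PGL}_2(7)$, $\mathrm{M}_{10}$, $\mathrm{L}_2(q)$, $\mathrm{PGL}_2(q)$, and their triple-cover variants, already handled in the quasisimple analysis. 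In every case the existence of such a $g$ contradicts the standing hypothesis, so no counterexample exists and $x\in O_p(G)$.
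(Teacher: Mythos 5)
Your first half is sound and matches the paper: the hypothesis passes to subgroups and to $G/O_p(G)$ (the paper uses the elementary Lemma \ref{l:prime powers} where you use Schur--Zassenhaus, but both work), a minimal counterexample has $O_p(G)=1$ and makes $\langle x\rangle$ a cyclic weakly subnormal $p$-subgroup, and your treatment of the $p$-solvable case via Theorem \ref{t:psolvable} is exactly the paper's argument. The genuine gap is the non-$p$-solvable case, which is the technical heart of the theorem: you never actually produce, for the groups in Table \ref{tab:thm3} and in the remaining cases of Theorem \ref{t:cyclic}, a prime-power $p'$-element $g$ with $[x,g]$ not a $p$-element. What you give is a plan --- ``abundance of involutions \dots class-multiplication or trace estimates to force $x^{-1}x^g$ to have mixed order'' --- which you yourself flag as ``the hard part'', with no estimate stated and no case carried out. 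Nothing in the proposal explains why such a counting argument would close; note that the set of $g$ for which $[x,g]$ \emph{is} a $p$-element is large (it contains all of $M=N_G(\langle x\rangle)$), so any character-sum or class-multiplication bound would have to be genuinely quantitative, and for the infinite families in Table \ref{tab:thm3} this is not routine. The paper's mechanism is different and structural: in the Table \ref{tab:thm3} cases the Sylow $p$-subgroup is cyclic, hence abelian, and Theorem \ref{t:abeliansylow} applies. Its proof uses the fact (coming out of Theorem \ref{t:cyclic}) that $x$ lies in no parabolic subgroup, so $p$ divides the order of no parabolic $R$ and $C_R(x)=1$; one then picks a nontrivial $r$-element $y\in R\cap R^x$ with $r\neq p$, observes that $y$ fixes the two points $R$ and $xR$ of $G/R$, and concludes that $[x,y]$ fixes $R$ and is therefore a nontrivial $p'$-element. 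The few nonabelian-Sylow cases are finished by direct computation (${}^2{\rm B}_2(32).5$, $\LL_2(8).3$, ${\rm M}_{10}$) and by the explicit $2\times 2$ matrix calculation in $\LL_2(q)$ and $\PGL_2(q)$.

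Two smaller points. First, in the product case $E(G)=T_1\times\cdots\times T_t$ your reasoning is off: choosing an $r$-element $g\in T_1$ ($r\neq p$) does not ``collapse the computation to the almost simple groups''; since $x$ moves $T_1$ to a different component, $[x,g]=(g^{-1})^xg$ is a product of nontrivial $r$-elements lying in two distinct direct factors and is therefore itself a nontrivial $r$-element --- this is exactly the paper's Lemma \ref{l:wreath2}, and it finishes that case immediately, with no reference to the almost simple analysis. Second, Theorem \ref{t:cyclic}(i) produces \emph{quasisimple} groups, so your case analysis would have to cope with nontrivial centers; the paper sidesteps this by first noting that $O_{p'}(G)$ is central (Lemma \ref{l:r-subs}) and then killing it by minimality, a reduction your outline omits.
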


 Recall that if $g\in G$ is an element of a finite group $G$ and $p$ is a prime, then  $g$ is called a $p'$-element (or a $p$-regular element) if its order is coprime to $p$; it is called $p$-singular if its order is divisible by $p$.  Define $Z_p^*(G)$ to be a normal subgroup of $G$ containing $O_{p'}(G)$, the largest normal $p'$-subgroup of $G$, such that $Z^*_{p}(G)/O_{p'}(G)=Z(G/O_{p'}(G))$.  
 
 In the opposite direction to Theorem \ref{t:thmA}, Guralnick and Robinson proved a version of Glauberman's $Z_p^*$-theorem (\cite[Theorem D]{GR}) stating that if $x$ is an element of prime order $p$ of a finite group $G$ and $[x,g]$ is $p$-regular for every $g\in G$, then $x\in Z_p^*(G).$ It turns out that the condition $[x,g]$ is a $p'$-element for every element $g\in G$ of prime power order will be enough to guarantee the conclusion of the aforementioned theorem. 
 
\begin{theorem}\label{t:thmB}
Let $G$ be a finite group and let $p$ be a prime. Let $x\in G$ be a $p$-element. If $[x,g]$ is a $p'$-element for every element $g\in G$ of prime power order, then $x\in Z_p^*(G)$.
\end{theorem}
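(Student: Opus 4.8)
The plan is to reduce to the classification of weakly subnormal cyclic $p$-subgroups (Theorem \ref{t:cyclic}) and then eliminate each surviving case, so I first set up the reduction. Let $x$ be a $p$-element with $[x,g]$ a $p'$-element for every $g \in G$ of prime power order, and set $\overline{G} = G/O_{p'}(G)$. Since the hypothesis passes to quotients (the image of a prime-power-order element is again of prime power order, and commutators map to commutators), I may assume $O_{p'}(G) = 1$ and aim to show $x \in Z(G)$. Arguing by induction on $|G|$, I would first handle the case where $x$ lies in a proper normal subgroup or a proper subgroup containing $C_G(x)$, so that one can assume $x$ generates a subgroup that is, in a suitable sense, corefree. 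The key structural observation is that the commutator condition forces $R := \langle x \rangle$ to normalize things very tightly: for a $p'$-element $g$ of prime power order, $[x,g]$ being $p$-regular means $x$ and $x^g = x[x,g]$ differ by a $p'$-element, which (together with $x$ being a $p$-element and working modulo $O_{p'}$) should be leveraged to show that $R$ is subnormal in every proper overgroup while not being subnormal in $G$ itself, unless $x$ is already central.

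The crux is therefore to show that if $x \notin Z_p^*(G)$, then $R = \langle x \rangle$ is a \emph{cyclic weakly subnormal} $p$-subgroup of $\overline{G}$ (after reducing to $O_{p'}(G) = 1$ and $O_p(G) = 1$, noting $O_p(G) \le Z_p^*(G)$ is handled separately since $x \in O_p(G)$ would place $x$ in the relevant normal subgroup). To obtain weak subnormality I would argue that, in any proper overgroup $H$ of $R$, the commutator hypothesis restricted to $H$ together with the induction hypothesis (or a direct Baer-Suzuki-type argument) forces $x \in Z_p^*(H)$, and then translate centrality modulo $O_{p'}(H)$ into subnormality of $R$ in $H$; meanwhile, if $R$ were subnormal in $G$ itself, $x$ would lie in $O_p(G)$, contradicting our reduction. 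This is the step I expect to be the main obstacle: carefully verifying that the local commutator condition yields subnormality in \emph{every} proper overgroup, since one must rule out overgroups in which $x$ fails to be central modulo the $p'$-core, and this likely requires invoking the Guralnick-Robinson $Z_p^*$-analogue (\cite[Theorem D]{GR}) for the proper subgroups inductively.

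Once $R$ is shown to be cyclic weakly subnormal with $O_p(G) = 1$ and $G$ not $p$-solvable, Theorem \ref{t:cyclic} supplies a short explicit list of candidate pairs $(G, M)$. The remaining work is then a case-by-case verification that in each candidate the commutator hypothesis actually fails; concretely, for each listed group I would exhibit a single $p'$-element $g$ of prime power order (for instance an element of the simple socle, or of the normalizer $M = N_G(R)$ of the cyclic Sylow in the quasisimple cases of Table \ref{tab:thm3}) such that $[x,g]$ is $p$-singular. In the generic quasisimple Lie-type cases where $R$ is generated by a regular semisimple element and $M$ is the torus normalizer, the conjugate $x^g$ for a suitable $g$ will have order divisible by $p$ in the commutator, ruling these out uniformly; the sporadic entries of Table \ref{tab:thm3}, the $p$-solvable base case (where Theorem \ref{t:psolvable} forces $R$ to be Sylow and the desired conclusion follows directly), and the product-type cases (ix)--(x) of Theorem \ref{t:main1} restricted to the cyclic list would each be dispatched by the same strategy of producing an explicit $g$ violating the hypothesis. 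The conclusion $x \in Z_p^*(G)$ follows since no genuine non-central configuration survives.
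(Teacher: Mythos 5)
Your proposal takes a genuinely different route from the paper's, and its central step fails. The plan hinges on showing that in a minimal counterexample $\langle x\rangle$ is a weakly subnormal $p$-subgroup, which you propose to obtain by ``translating centrality modulo $O_{p'}(H)$ into subnormality of $\langle x\rangle$ in $H$'' for proper overgroups $H$. That translation is false: membership in $Z_p^*(H)$ does not imply subnormality. Take $H=Q\rtimes\langle x\rangle$ a Frobenius group with kernel a $q$-group $Q$, $q\neq p$, and $o(x)=p$. Since $H/Q$ is abelian, $[x,g]\in Q$ is a $p'$-element for \emph{every} $g\in H$, so the hypothesis of Theorem \ref{t:thmB} holds and $x\in Z_p^*(H)=H$; yet $\langle x\rangle$ is not subnormal in $H$, since $O_p(H)=1$. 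This is exactly why the machinery of Sections \ref{s:wsn}--\ref{s:reduction} (the Zipper lemma, Theorem \ref{t:cyclic}, Table \ref{tab:thm3}) is unavailable for Theorem \ref{t:thmB}: that machinery is tailored to conclusions of the form $x\in O_p(G)$, which for a $p$-element is equivalent to subnormality of $\langle x\rangle$, whereas $x\in Z_p^*(G)$ is strictly weaker and is not a good inductive hypothesis. Consequently the reduction to Theorem \ref{t:cyclic} and the case-by-case elimination over Table \ref{tab:thm3} never get off the ground: a minimal counterexample to Theorem \ref{t:thmB} need not appear in that classification at all, as the Frobenius example (which satisfies the hypothesis nontrivially without being a counterexample) already indicates.

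For comparison, the paper proves Theorem \ref{t:thmB} in Section \ref{s:Glauberman}, inside Theorem \ref{th:Glauberman}, via the implications (v)$\Rightarrow$(i)$\Rightarrow$(vi), and the first implication is completely elementary: if $Y$ is any $p$-subgroup containing $x$ and $g\in N_G(Y)$ has prime power order, then $[x,g]=x^{-1}x^g\in Y$ is a $p$-element, while by hypothesis it is a $p'$-element, so $[x,g]=1$; by Lemma \ref{l:prime powers}(ii), $x$ then centralizes $N_G(Y)$, and a short Sylow/fusion argument yields $x^G\cap P=\{x\}$, i.e. $x$ is isolated in $P$. The classification-dependent input is then quoted as a black box only for elements of order $p$: one passes to the power $y=x^{p^{a-1}}$ of order $p$ (isolation is inherited by powers of $x$), applies \cite[Theorem 4.1]{GR} to get $y\in Z_p^*(G)$, and finishes by induction on $|G|$ through central quotients. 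If you wanted to salvage your strategy, you would in effect have to reprove that CFSG-based theorem for elements of order $p$, rather than reduce to the weak subnormality classification; the local argument above is both shorter and avoids the false subnormality claim.
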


We cannot  assume that $[x,g]$ is $p'$-element for every $p'$-element $g\in G$ of prime power order (which is an exact opposite to Theorem \ref{t:thmA}). To see this, take $G=S_4$, the symmetric group of degree $4$  and $x$ any transposition in $G$. Then $[x,g]$ is a $3$-element for every $2'$- element $g\in G$ but clearly $x$ is not contained in $ Z_2^*(G)=1.$ Note that the hypothesis of  Glauberman's $ Z_p^*$-theorem implies that the element $x$ lies in the center of all Sylow $p$-subgroups of $G$ containing $x$. (See Theorem \ref{th:Glauberman} for other equivalent statements of Glauberman's $Z_p^*$-theorem).

We propose the following conjecture which is the one of strongest possible generalizations of the Baer-Suzuki theorem (as well
as Baer's theorem). Let  $k\ge 1$ be an integer and let $x\in G$ be a $p$-element.  Let $$\Gamma_k(x)=\{[g,{}_kx]:= [g,\underbrace{x,x,\dots, x}_{k \text{ times}}]: g\in G\},$$
where we define $[x_1,x_2,\dots,x_n]=[[x_1,x_2,\dots,x_{n-1}],x_n]$ for  $x_1,x_2,\dots,x_n\in G$ and any integer $n\ge 2$. 

\begin{conjecture}\label{conj: multicommutators}
Let $G$ be a finite group and let $p$ be a prime divisor of $|G|$. Let $x\in G$ be a $p$-element, and suppose that for some integer $k\geq 1$, $ab$ is a $p$-element for all $a,b\in \Gamma_k(x)$. Then $x\in O_p(G)$.
\end{conjecture}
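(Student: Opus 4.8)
The plan is to run a minimal-counterexample argument and feed the resulting almost simple configurations into the classification of weakly subnormal $p$-subgroups in Theorem~\ref{t:main1}. I begin with two elementary observations. First, since $g=1$ gives $[1,{}_kx]=1$, the identity lies in $\Gamma_k(x)$; taking $b=1$ in the hypothesis shows that \emph{every} element of $\Gamma_k(x)$ is a $p$-element, and the full hypothesis says that the product of any two elements of $\Gamma_k(x)$ is again a $p$-element. Second, there is a recursive structure: writing $\gamma=[g,{}_kx]$ one has $\gamma^x=[g^x,{}_kx]\in\Gamma_k(x)$, so $\langle x\rangle$ permutes $\Gamma_k(x)$ and
\[
\Gamma_{k+1}(x)=\{\gamma^{-1}\gamma^x:\gamma\in\Gamma_k(x)\}.
\]
For $k=1$ this already closes the argument: with $C=x^G$ one has $\Gamma_1(x)=\{(x^g)^{-1}x:g\in G\}=C^{-1}x$, and since being a $p$-element is conjugation-invariant, the normal closure of $\Gamma_1(x)$ as a normal set is exactly $C^{-1}C$. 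Thus the hypothesis forces $C^{-1}C$ to consist of $p$-elements, and applying \cite[Theorem~1.4]{GM1} to $x^{-1}$ (whose class is $C^{-1}$) gives $x\in O_p(G)$. So the real content of the conjecture is the case $k\ge 2$, where the hypothesis genuinely weakens.

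For the reduction I would take a counterexample $(G,x,p,k)$ with $|G|$ minimal. As noted in the introduction one reduces to $O_p(G)=1$, and replacing $G$ by the normal closure $\langle x^G\rangle$ (whose $O_p$ is contained in $O_p(G)$) one may assume $G=\langle x^G\rangle$. With $O_p(G)=1$ we have $F^*(G)=O_{p'}(G)E(G)$. The key use of the iterated structure is on the coprime part: since $x$ is a $p$-element and $O_{p'}(G)$ is a $p'$-group, coprime action gives $[O_{p'}(G),x]=[O_{p'}(G),{}_jx]$ for all $j\ge1$, so the commutators of $x$ into $O_{p'}(G)$ do not shrink with $k$ and are already visible at level one. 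This should allow the $k=1$ analysis to be applied to the action on $O_{p'}(G)$ and reduce to $O_{p'}(G)=1$, i.e. to $F^*(G)=E(G)$ a product of components, matching the conclusion of Theorem~\ref{t:main1}. A standard layer and component-permutation analysis then reduces to the almost simple situation, with $x$ inducing an automorphism of a single simple group, the wreathed case being handled through $N_G(T)/C_G(T)$ exactly as in parts (ix)--(x) of Theorem~\ref{t:main1}.

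At this point the arguments of Theorems~\ref{t:thmA} and \ref{t:thmB} suggest the route: show that $R=\langle x\rangle$ is contained in a unique maximal subgroup $M$ with $R\le O_p(M)$, so that $R$ is weakly subnormal, and read off the candidate pairs $(G,M)$ from Theorem~\ref{t:main1} and Table~\ref{tab:thm3}. One then has to eliminate every surviving candidate by exhibiting, for each class of $p$-elements $x\notin O_p(G)$, two elements $a,b\in\Gamma_k(x)$ with $ab$ of order divisible by a prime other than $p$. When $x$ is a regular semisimple element of a group of Lie type this should be arranged by choosing $g$ so that $[g,{}_kx]$ is again regular semisimple and pairing it with a conjugate lying in a different maximal torus; the unipotent cases $\LL_2(p),\SL_2(p)$ and the sporadic entries of Table~\ref{tab:thm3} are then checked directly.

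The main obstacle is the control of $\Gamma_k(x)$ for large $k$ on the components, where coprimality fails. Unlike the $O_{p'}(G)$ part, when $x$ induces a $p$-singular automorphism of a simple group $S$ (for instance a field, graph-field, or inner-diagonal $p$-element), the iterated commutator map $\gamma\mapsto[\gamma,x]$ need not stabilize, and a priori $\Gamma_k(x)$ could collapse toward the identity, making the hypothesis vacuous and the conclusion unreachable by the $k=1$ reduction. The crux of the proof will therefore be a stabilization statement of the form: if $x\notin O_p(G)$, then $\Gamma_k(x)$ still contains a pair $a,b$ with $ab$ not a $p$-element, uniformly in $k$. Establishing this requires new case-by-case work on the fixed points and commutator depth of automorphisms of quasisimple groups of Lie type --- precisely the families in Table~\ref{tab:thm3} --- going beyond the single-commutator estimates used for Theorems~\ref{t:thmA} and \ref{t:thmB}. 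I expect the rank-one groups $\LL_2(q)$, $\UU_3(q)$, ${}^2{\rm B}_2(q)$, ${}^2{\rm G}_2(q)$ and the delicate $p=2$ configurations in (ix)--(x) to be the hardest, and the residual possibility that some such collapse forces a genuine exception (as the prime $5$ does in the related Guralnick--Malle conjecture) is exactly what makes a uniform proof of the conjecture subtle.
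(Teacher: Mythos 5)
The statement you were asked to prove is Conjecture~\ref{conj: multicommutators}, which is \emph{open}: the paper does not prove it. What the paper establishes (Section~\ref{s:conjectures}) is only a description of the possible minimal counterexamples, namely Proposition~\ref{p:conj1}, obtained by the same kind of reduction you outline: in a minimal counterexample $\langle x\rangle$ is weakly subnormal, one reduces to $O_p(G)=O_{p'}(G)=1$, Theorem~\ref{t:cyclic} then applies, the small cases are removed by GAP, and $|x|\in\{2,3,4\}$ is excluded via Corollary~\ref{c:cyclic34}, the Baer--Suzuki theorem, and (for $|x|=3$) an argument with $(3,3,3)$-generated groups and Magnus' theorem \cite{Magnus}. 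So your attempt cannot be compared against a proof in the paper; it can only be judged as a proof in its own right, and it is not one.

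The parts you do prove are correct: the identity lies in $\Gamma_k(x)$, so the hypothesis forces all of $\Gamma_k(x)$ to consist of $p$-elements; the recursion $\Gamma_{k+1}(x)=\{\gamma^{-1}\gamma^x:\gamma\in\Gamma_k(x)\}$ holds; and the $k=1$ case is settled, since the conjugates of the elements of $\Gamma_1(x)=C^{-1}x$ fill out $C^{-1}C$, and \cite[Theorem 1.4]{GM1} applied to $x^{-1}$ yields $x\in O_p(G)$ (in effect the observation already made in the paper's introduction). Your reduction of a minimal counterexample to $G=\langle x^G\rangle$ with $O_p(G)=1$, and then to $O_{p'}(G)=1$ via coprime action, is also sound and parallels the paper's own reduction. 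The genuine gap is exactly the one you name yourself: for $k\geq 2$ one must eliminate the surviving configurations (the cyclic-Sylow simple groups of Table~\ref{tab:thm3} and the $p=2$ cases of Proposition~\ref{p:conj1}), and here the $k=1$ argument is unavailable because $\Gamma_k(x)$ genuinely collapses. The paper's examples --- $x$ of order $5$ in $\Alt_5$, and of order $3$ in $\LL_2(8)$, where for all $k\geq 2$ the set $\Gamma_k(x)$ stabilizes to a small set of $p$-elements together with the identity --- show that the weaker hypothesis ``$\Gamma_k(x)$ consists of $p$-elements'' is insufficient, so any proof must exploit the condition on products $ab$ in an essential, case-by-case way on those groups. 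That elimination remains open both in your proposal and in the paper (which only records the sufficient criterion of finding a non-$p$-element $g$ with $[g,{}_kx]=g$). In short: your reductions are correct and follow the same strategy as Proposition~\ref{p:conj1}, but they do not constitute a proof of the conjecture, and none should have been claimed.
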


For odd primes, this conjecture can be reduced to simple groups.  Note if $x \in A_5$ is an element of order $5$, then for $k > 1$,  $\Gamma_k(x)$ has size $6$
and consists of $5$ conjugates of $x$ and the identity element (see \cite{GM1}). Also, if $x\in \LL_2(8)$ has order $3$, then for $k > 1$,  $\Gamma_k(x)$ consists of $27$ elements of order $9$ and the identity element. Thus, for a $p$-element $x$, $\Gamma_k(x)$ consisting of $p$-elements does not guarantee that $x\in O_p(G)$ (at least for $p=3,5$). 

It would also be interesting to determine whether or not it is true that if $G$ is a finite group and $x\in G$ is a $p$-element such that for some integer $k\geq 1$, $[a,b]$ is a $p$-element for all $a,b\in \Gamma_k(x)$, then $x\in O_p(G)$.
Note that by \cite{GTr}, we have that if $\langle \Gamma_k(x) \rangle$ is a $p$-group,
then $\langle x \rangle$ is subnormal in $G$.

As an application of a generalization of the Baer-Suzuki theorem (\cite[Theorem A]{GR} and \cite[Theorem 1.4]{GM1}), it is proved in \cite[Theorem A]{BLMNST} that if $x\in G$ is a $p$-element, where $p$ is a prime and $G$ is a finite group, and $xy$ is a $p$-element for every $p$-element $y\in G$, then $x\in O_p(G)$. We prove a generalization of this result as follows.

\begin{theorem}\label{t:thmC}
 Let $G$ be a finite group and let $p$ be a prime. Let $x\in G$ be a $p$-element. Assume that $xy$ is either $1$ or $p$-singular for every $p$-element $y\in G$. Then $x\in O_p(G)$.
\end{theorem}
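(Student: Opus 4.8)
The plan is to prove the contrapositive: granting that $x \notin O_p(G)$, I will exhibit a $p$-element $y \in G$ such that $xy$ is a nontrivial $p'$-element, which violates the hypothesis. (The converse implication is immediate: if $x \in O_p(G)$ and $y$ is any $p$-element, then $xy$ lies in the $p$-group $O_p(G)\langle y \rangle$ and so is a $p$-element, hence $1$ or $p$-singular.) I work with a counterexample $(G,x)$ of minimal order. Since $\langle x \rangle$ is a $p$-group, it is subnormal in $G$ exactly when $x \in O_p(G)$; thus $\langle x \rangle$ is not subnormal. If some proper overgroup $\langle x \rangle \le H < G$ satisfied $x \notin O_p(H)$, then $(H,x)$ would be a strictly smaller counterexample, as the hypothesis is inherited by $H$; hence $x \in O_p(H)$ for every proper overgroup $H$, i.e. $\langle x \rangle$ is weakly subnormal in $G$. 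Next, $O_p(G)=1$: otherwise set $\bar G = G/O_p(G)$ and $\bar x = xO_p(G)$; the hypothesis descends, because if $\bar y$ is a $p$-element of $\bar G$ with $\bar x\bar y$ a nontrivial $p'$-element, then every element of the coset $yO_p(G)$ is a $p$-element and, by Schur--Zassenhaus, the coset $(xy)O_p(G)$ contains a genuine $p'$-element $c = x(yn)$ with $yn$ a $p$-element, a bad element of $G$. As $\bar x \ne 1$, the pair $(\bar G,\bar x)$ would then be a smaller counterexample.

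Thus $\langle x \rangle$ is a cyclic weakly subnormal $p$-subgroup of $G$ with $O_p(G)=1$, and Theorem~\ref{t:psolvable} (if $G$ is $p$-solvable) or Theorem~\ref{t:cyclic} (otherwise) determines $G$. One further localisation streamlines the element search: if $N \trianglelefteq G$ is a $p'$-group and $\bar y \in G/N$ is a $p$-element with $\bar x \bar y$ a nontrivial $p'$-element, then lifting $\bar y$ to a $p$-element $y$ forces the $p$-part of $xy$ into the $p'$-group $N$, hence to be trivial, so $xy$ is already a genuine bad element of $G$. Consequently it suffices to construct a bad element modulo $O_{p'}(G)$ --- in particular, modulo the $p'$-part of the centre when $G$ is quasisimple --- so I may reduce the construction to the (almost) simple groups of Table~\ref{tab:thm3} and the remaining listed configurations.

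When $G$ is $p$-solvable, Theorem~\ref{t:psolvable} gives $G = QR$ with $R = \langle x \rangle$ acting faithfully on a special normal $q$-group $Q$. Here the nontrivial $p'$-elements are precisely the nontrivial elements of $Q$, and for $u \in Q$ the conjugate $y := (x^{-1})^u = u^{-1}x^{-1}u$ is a $p$-element with $xy = x\,(x^{-1})^u = {}^{x}(u^{-1})\,u \in Q$; this is nontrivial whenever $x$ does not centralise $u$, and such $u$ exists by faithfulness. When $G$ is not $p$-solvable I run through the configurations of Theorem~\ref{t:cyclic}. The sporadic entries of Table~\ref{tab:thm3} and the small groups ($\mathrm{M}_{10}$, $\mathrm{L}_3(4).2_3$, and the like) are handled by a direct check, via the character table or class-multiplication coefficients, that some product of $x$ with a $p$-element has $p'$-order. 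For the families $\Alt_p$, $\mathrm{L}_n(q)$, $\mathrm{U}_n(q)$, $\mathrm{L}_2(q)$, $\mathrm{U}_3(q)$, ${}^2\mathrm{B}_2(q)$, ${}^2\mathrm{G}_2(q)$, ${}^3\mathrm{D}_4(q)$, ${}^2\mathrm{F}_4(q)$ and $\mathrm{E}_8(q)$, the remark preceding Theorem~\ref{t:cyclic} tells us $x$ is either unipotent (only for $\mathrm{L}_2(p)$ or $\mathrm{SL}_2(p)$) or a regular semisimple element generating a cyclic maximal torus; I then choose a $p$-element $y$ so that the eigenvalue (equivalently, in rank one, the trace) data of $xy$ is that of a nontrivial semisimple element of $p'$-order, arguing via the Frobenius class-multiplication formula that the relevant coefficient is positive once $q$ exceeds a small bound, and checking the finitely many remaining small groups by hand. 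For the configurations (vi) and (vii) with $E(G) = T_1 \times \cdots \times T_t$, I reduce to the single-component automizer $N_G(T_1)/C_G(T_1)$ (which is one of the groups already treated), produce a bad element there, and lift it to a $p$-element of $G$ acting trivially on $T_2, \ldots, T_t$, so that $xy$ retains $p'$-order after the localisation modulo $O_{p'}(G)$ and the central $3$-group in case (vii).

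The main obstacle is this last step: the explicit verification for the groups of Lie type. Unlike in the Baer--Suzuki variant of \cite{BLMNST}, where it suffices to arrange that $xy$ fails to be a $p$-element, here $xy$ must be forced to be a \emph{genuine} nontrivial $p'$-element, so the class-multiplication and trace computations must land exactly in a semisimple $p'$-class while avoiding both the identity and every $p$-singular class. Ensuring this uniformly across all the families in Table~\ref{tab:thm3}, and correctly tracking the permutation action of $x$ on the components in the multiple-component configurations, is where the bulk of the work lies.
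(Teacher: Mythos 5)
Your reduction steps are sound and essentially match the paper's: the minimal counterexample makes $\langle x\rangle$ weakly subnormal, $O_p(G)=1$ follows by lifting through $O_p(G)$, the hypothesis passes to quotients by normal $p'$-subgroups, and your $p$-solvable argument (multiplying $x$ by the conjugate $(x^{-1})^u$) is exactly the paper's. The problems lie in the non-$p$-solvable cases, and the first is a step that would fail outright: your plan for the several-component configurations (vi) and (vii) of Theorem \ref{t:cyclic} cannot be executed. There $x$ permutes the components $T_1,\dots,T_t$ transitively, so it does not normalize $T_1$ and has no image in $N_G(T_1)/C_G(T_1)$ to pair with a ``bad element''; worse, any $p$-element $y$ centralizing $T_2,\dots,T_t$ lies in the kernel $E(G)\langle x^t\rangle$ of the action on components, say $y=ex^{tk}$ with $e\in E(G)$, and then $xy$ maps to $\bar{x}^{1+tk}\neq 1$ in the nontrivial cyclic $p$-group $G/E(G)$ (note $1+tk$ is prime to $p$ because $t$ is a power of $p$). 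Hence $o(xy)$ is always divisible by $p$, and no witness of the shape you propose exists. The paper's fix is to take the auxiliary $p$-element to be a conjugate of $x^{-1}$, so that the product lands inside $E(G)$: for $1\neq y$ an $r$-element of $T_1$ with $r\neq p$, one has $x\cdot(x^{yx})^{-1}=y^{-1}y^{x}$, a nontrivial $r$-element because $y$ and $y^{x}$ lie in distinct commuting components, and this contradicts the hypothesis at once.

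Second, for the groups of Table \ref{tab:thm3} --- the heart of the theorem --- you only describe a strategy (forcing $xy$ into a $p'$-class via positivity of class-multiplication coefficients) and concede that carrying it out ``is where the bulk of the work lies''; as written, the main case is unproven. You are missing the observation that makes the paper's proof short: every entry of Table \ref{tab:thm3} has cyclic, hence abelian, Sylow $p$-subgroups, and $[x,y]=x^{-1}\cdot x^{y}$ is $x^{-1}$ times a $p$-element; since the hypothesis for $x$ implies the same hypothesis for $x^{-1}$ (because $o(x^{-1}y)=o(xy^{-1})$), any $y$ with $[x,y]$ a nontrivial $p'$-element already yields a contradiction. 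Such a $y$ is supplied by Theorem \ref{t:abeliansylow}, which was proved for Theorem \ref{t:thmA} by an elementary fixed-point argument on coset spaces of parabolic subgroups, with no structure-constant estimates. This disposes of all of Table \ref{tab:thm3} in one stroke, leaving only $\LL_2(8).3$, ${}^2{\rm B}_2(32).5$, the $p=2$ groups with socle $\LL_2(q)$, $q\equiv -1\pmod 8$ (explicit $2\times 2$ matrices), and ${\rm M}_{10}$ (character table). If you prefer your route, the regular semisimple cases can be rescued by citing Gow's theorem \cite{Gow} rather than redoing character estimates: $x\cdot(x^{-1})^{g}$ realizes a conjugate of every noncentral semisimple class, in particular a nontrivial $p'$-class. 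But even then the component case requires the correction above.
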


We do not know any counterexample to the following.

\begin{conjecture}\label{conjD}
Let $G$ be a finite group and let $p$ be a prime. Let $x\in G$ be an element of order $p$.   If $[x,g]$ is either $1$ or $p$-singular for every element $g\in G$, then $x\in O_p(G)$.
\end{conjecture}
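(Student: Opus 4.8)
The plan is to argue by contradiction, taking $G$ to be a counterexample of minimal order: thus $x$ has order $p$, the commutator $[x,g]$ is trivial or $p$-singular for every $g\in G$, but $x\notin O_p(G)$. The first movement is a sequence of reductions. If $g\in N:=O_{p'}(G)$ then $[x,g]\in N$ is a $p'$-element, so the hypothesis forces $[x,g]=1$; hence $x$ centralizes $O_{p'}(G)$. Passing to $\bar G=G/N$, the order of $\bar x$ is still $p$ and, since $N$ is a $p'$-group, the image of a $p$-singular element is again trivial or $p$-singular, so the hypothesis persists in $\bar G$. Minimality gives $\bar x\in O_p(\bar G)$; writing $L$ for the preimage of $O_p(\bar G)$ in $G$, the subgroup $C:=C_L(N)\trianglelefteq G$ contains $x$, has central $p'$-subgroup $Z(N)$ and $p$-group quotient $C/Z(N)$, so by Schur--Zassenhaus its Sylow $p$-subgroup is characteristic in $C$, hence normal in $G$, and contains $x$; this forces $x\in O_p(G)$, a contradiction. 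Thus $O_{p'}(G)=1$. A parallel minimality argument applied to $\langle x^G\rangle$ (whose $O_p$ is normal in $G$) shows $G=\langle x^G\rangle$.

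The crucial observation is that the hypothesis is inherited by every subgroup containing $x$: if $x\in H\le G$ then $[x,h]$ is trivial or $p$-singular for all $h\in H$. Hence, for every \emph{proper} overgroup $H$ of $\langle x\rangle$, minimality yields $x\in O_p(H)$, so $\langle x\rangle$ is subnormal in $H$. As $x\notin O_p(G)$, the subgroup $\langle x\rangle$ is not subnormal in $G$ (a subnormal $p$-subgroup lies in $O_p(G)$), so $\langle x\rangle$ is a \emph{cyclic weakly subnormal $p$-subgroup of order $p$}. Weak subnormality of $\langle x\rangle$ is equivalent to that of $\langle\bar x\rangle=\langle x\rangle O_p(G)/O_p(G)$ in $G/O_p(G)$, a group with trivial $p$-core; so Theorems \ref{t:psolvable} and \ref{t:cyclic} apply to $G/O_p(G)$ and pin $\bar G=G/O_p(G)$ down to the explicit list: either the $p$-solvable configuration $\bar Q\rtimes\langle\bar x\rangle$ with $\bar Q$ a special normal $q$-subgroup, or one of the (mostly almost simple) groups of Theorem \ref{t:cyclic} and Table \ref{tab:thm3}.

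To finish in each configuration I would seek a single $y\in G$ with $[x,y]$ a nontrivial $p'$-element, contradicting the hypothesis. The leverage is that for any proper subgroup $H=\langle x,y\rangle$ we already know $x\in O_p(H)$, whence $[x,y]\in O_p(H)$ is a $p$-element; a contradiction can therefore only be produced by a $p'$-element $y$ with $\langle x,y\rangle=G$. In the $p$-solvable case one takes $\bar y$ in the special normal $q$-subgroup of $\bar G$ on which $\bar x$ acts nontrivially and irreducibly, producing a nontrivial $q$-element commutator downstairs; in the almost simple cases one invokes generation results to find a $p'$-element $y$ generating $G$ with $x$ and computes $[x,y]$ directly. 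For the regular semisimple and unipotent elements described after Theorem \ref{t:cyclic}, and for the sporadic and small entries of Table \ref{tab:thm3} (including the delicate $p=3$ case $\bar G=\LL_2(8)$ and the $p=5$ entries), this is a finite, checkable computation.

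The main obstacle is the interaction with a nontrivial $p$-core. The hypothesis places \emph{no} constraint on the action of $x$ on $O_p(G)$: for $v\in O_p(G)$ the commutator $[x,v]$ automatically lies in $O_p(G)$ and is a harmless $p$-element. Consequently, when $O_p(G)\ne1$, a commutator $[x,g]$ whose image in $\bar G$ is a genuine nontrivial $p'$-element may lift to a $p$-singular element of $G$, its $p$-part being absorbed into $O_p(G)$; the clean contradiction obtained when $O_p(G)=1$ is then lost, and (since the hypothesis does not pass to $G/O_p(G)$) minimality gives no reduction to $O_p(G)=1$. Proving the conjecture thus requires showing that, for each quotient $\bar G$ in the classification, no $p$-group extension can simultaneously absorb \emph{all} the $p'$-commutators forced by the generation analysis, i.e.\ that some $g$ always yields a genuinely $p'$-valued $[x,g]$ in $G$ itself. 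This is exactly where the present hypothesis, being strictly weaker than that of Theorem \ref{t:thmC} (it controls only the products $x^{-1}x^{g}$ of $x^{-1}$ with conjugates of $x$, and only up to $p$-singularity rather than requiring a $p$-element), fails to reduce to the already-proved results, and is why the statement remains a conjecture.
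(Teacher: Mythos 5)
You have correctly recognized that this statement is not a theorem of the paper at all: it is Conjecture \ref{conjD}, which the authors explicitly leave open (``We do not know any counterexample to the following''), so there is no paper proof to compare against. Your proposal is therefore rightly not a proof but a reduction-plus-obstruction analysis, and as such it is sound and closely parallels what the paper itself does. Your reductions ($x$ centralizes $O_{p'}(G)$, hence $O_{p'}(G)=1$ in a minimal counterexample via the nilpotent preimage argument; $G=\langle x^G\rangle$; $\langle x\rangle$ weakly subnormal; structure of $G/O_p(G)$ via Theorems \ref{t:psolvable} and \ref{t:cyclic}) reproduce exactly the content of the paper's Corollary \ref{p:minimal2} and Proposition \ref{p:wsnprime}, and the obstruction you isolate --- that the hypothesis is not inherited by $G/O_p(G)$, so one cannot assume $O_p(G)=1$ and a $p'$-commutator modulo $O_p(G)$ may lift to a $p$-singular element --- is precisely the difficulty the authors name at the start of Section \ref{s:conjectures} (``this is not a good inductive hypothesis. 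So Wielandt's Zipper lemma is not as useful in this context'').

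The one substantive thing your proposal stops short of is the paper's partial progress past that obstruction: Theorem \ref{t:conjDOp} proves the conjecture when $O_p(G)$ is abelian (and Corollary \ref{c:conjDOp} when a Sylow $p$-subgroup is abelian). The mechanism there is worth internalizing, since it answers in a special case exactly the question you pose at the end: one chooses $g$ so that $[x,g]=yv$ with $y$ a nontrivial $p'$-element and $v\in V:=O_p(G)$, arranges (via Gow's theorem and the explicit classification) that $x$ and $y$ \emph{invariably} generate $G$ modulo $V$, shows $W:=[V,G]$ satisfies $W=[W,G]$ and $v\in W$, and then uses a character-sum count of representations of elements of $y^G$ as products of conjugates of $x^{\pm1}$: characters nontrivial on $W$ vanish on the relevant classes, so the count is constant along the coset $yW$, forcing some genuinely $p'$-valued commutator $[x,g]$ in $G$ itself. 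This is the kind of argument needed to rule out the ``absorption into $O_p(G)$'' phenomenon you describe; extending it beyond abelian $O_p(G)$ is what remains open.
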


The assumption on the order of $x$ is necessary since if $G=\GL_2(3)$ and $x\in G$ is an element of order $8$, then $[x,g]=1$ or  is $2$-singular for every $g\in G$ but $x\not\in O_2(G)$. Note that Conjecture \ref{conjD} is true when $G$ has a cyclic Sylow $p$-subgroup (\cite[Theorem 2.1]{GR}) or when $p=2$ (the Baer-Suzuki theorem). We show that Conjecture \ref{conjD} holds under the assumption that $O_p(G)$ is abelian (see Theorem \ref{t:conjDOp}) or the assumption that a Sylow $p$-subgroup of $G$ is abelian (see Corollary  \ref{c:conjDOp}).

We next complete the proof of the following result which is stated as Theorem E in \cite{BLMNST} modulo a conjecture about finite simple groups.
\begin{theorem}\label{t:thmE}
Let $G$ be a finite group and let $p$ be a prime. Let $x\in G$ be a $p$-element. Then $x\in O_p(G)$ if and only if $r$ divides o(xy) for all nontrivial $r$-elements $y\in G$ and all primes $r\neq p.$
\end{theorem}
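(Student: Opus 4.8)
The statement is an equivalence, and the forward implication is routine. If $x \in O_p(G)$, put $N = O_p(G)$ and pass to $\bar{G} = G/N$, where $\bar{x} = 1$. For any prime $r \neq p$ and any nontrivial $r$-element $y \in G$, the image $\bar y$ is still a nontrivial $r$-element (since $\langle y\rangle \cap N = 1$), so $o(\overline{xy}) = o(\bar y)$ is a positive power of $r$. Because the kernel $N$ is a $p$-group, for every $g \in G$ the integers $o(g)$ and $o(\bar g)$ differ only by a power of $p$, so their $r$-parts coincide. Hence $r$ divides $o(\overline{xy})$, and therefore $r \mid o(xy)$.

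For the reverse (hard) implication I would argue by a minimal counterexample and appeal to the classification of cyclic weakly subnormal $p$-subgroups. The starting observation is that, for a $p$-element $x$, one has $x \in O_p(G)$ if and only if $\langle x\rangle$ is subnormal in $G$ (a subnormal $p$-subgroup lies in $O_p(G)$). Suppose the implication fails and let $G$ be of least order admitting a $p$-element $x$ for which the divisibility hypothesis holds but $\langle x\rangle$ is not subnormal. The hypothesis is inherited by every subgroup containing $x$ (the orders $o(xy)$ are unchanged), so by minimality $x \in O_p(H)$ for every proper overgroup $H$ of $\langle x\rangle$; thus $\langle x\rangle$ is subnormal in every proper overgroup but not in $G$, i.e. $\langle x\rangle$ is weakly subnormal. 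The same $r$-part argument shows the hypothesis passes to $\bar{G} = G/O_p(G)$, while $\bar x \notin O_p(\bar G) = 1$, so minimality forces $O_p(G) = 1$. Hence $\langle x\rangle$ is a nontrivial cyclic weakly subnormal $p$-subgroup in a group with $O_p(G) = 1$.

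I would then split on $p$-solvability. If $G$ is $p$-solvable, Theorem \ref{t:psolvable} gives $G = QR$ with $Q$ a special normal $q$-group ($q \neq p$) and $R = \langle x\rangle$ acting irreducibly and nontrivially on $Q/\Phi(Q)$; in particular $x$ does not centralize $Q$, so $[x,g] \neq 1$ for some $g \in Q$. As $Q \trianglelefteq G$, the element $y := [x,g] \in Q$ is a nontrivial $q$-element, while $xy = x^g$ is conjugate to $x$ and hence a $p$-element, so $q \nmid o(xy)$, contradicting the hypothesis with $r = q$. If $G$ is not $p$-solvable, Theorem \ref{t:cyclic} places $(G,x,M)$ in one of the explicitly listed configurations, and in each I would exhibit a prime $r \neq p$ and a nontrivial $r$-element $y$ with $r \nmid o(xy)$. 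The uniform tool is the identity $x^g = x\,[x,g]$: since $x^g$ is always a $p$-element, it suffices to find $g$ with $[x,g]$ a nontrivial prime-power element of order coprime to $p$ and take $y = [x,g]$. For $x$ regular semisimple in a quasisimple group of Lie type (the generic case of Table \ref{tab:thm3}) I would instead take $y$ a nontrivial unipotent element chosen so that $xy$ is semisimple, yielding a bad pair with $r$ the defining characteristic; for the exceptional unipotent case $G = {\rm L}_2(p)$ one selects a conjugate $x^g$ with $x^{-1}x^g$ of small odd prime order.

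The main obstacle is this final case analysis. For the twisted and exceptional families of Table \ref{tab:thm3} (the Suzuki, Ree, ${}^3{\rm D}_4$, ${}^2{\rm F}_4$ and ${\rm E}_8$ groups) and for the families ${\rm L}_n(q)$, ${\rm U}_n(q)$, one must control the semisimple and unipotent classes well enough to guarantee that the coset $x\,U$ (for a suitable unipotent subgroup $U$) meets a semisimple class, equivalently that a suitable commutator $[x,g]$ is a nontrivial $p'$-element of prime-power order; this requires detailed structural input, and for the sporadic and small entries of the table a direct computation. The non-quasisimple configurations (vi) and (vii) of Theorem \ref{t:cyclic}, where $E(G)$ is a direct or central product of several components permuted by $x$, I would treat by choosing $y$ to be an odd-order element inside a single component and reducing the computation of $o(xy)$ to the almost simple quotient $N_G(T)/C_G(T)$, which lies on the short list ${\rm PGL}_2(7)$, ${\rm M}_{10}$, ${\rm L}_2(q)$, ${\rm PGL}_2(q)$ with $q$ a Mersenne prime; the triple-cover case with ${\rm M}_{10}$-quotient is then a finite check.
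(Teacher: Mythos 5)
Your skeleton matches the paper's: minimal counterexample, hence $\langle x\rangle$ weakly subnormal; reduction to $O_p(G)=1$ via the $r$-part argument; Theorem \ref{t:psolvable} for the $p$-solvable case (your commutator argument there is exactly right); and the identity $x^g=x[x,g]$ as the uniform tool. But there is a genuine gap precisely where you flag "the main obstacle": the generic quasisimple Lie-type entries of Table \ref{tab:thm3}. Your proposed fix --- choose $y$ unipotent with $xy$ semisimple, i.e.\ show the coset $xU$ meets a semisimple class --- runs into a structural problem you have not addressed: in these cases Theorem \ref{t:cyclic} guarantees that $x$ lies in \emph{no} parabolic subgroup, so there is no $x$-normalized unipotent subgroup $U$ available, and the standard argument (for $s$ regular in a torus inside a Borel, $C_U(s)=1$ forces $sU=s^U$) is unusable; making the unipotent-perturbation idea work would need serious class-multiplication estimates that you do not supply. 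The missing idea is Gow's theorem (\cite[Theorem 2]{Gow}): since $x$ is regular semisimple, for any prime $r\neq p$ different from the defining characteristic and any element $z$ of order $r$ one can write $z^g=xa$ with $a\in x^G$, so $y:=z^{-g}$ is a nontrivial $r$-element with $o(xy)=o(a)=o(x)$ a $p$-power. This handles all the Lie-type families uniformly (with explicit matrix computations only for $\LL_2(p)$ unipotent and the small $p=2$ torus-normalizer cases). A related omission: you cannot simply ignore a central $O_{p'}(G)$, because the hypothesis ($r\mid o(xy)$ for all $r$-elements $y$) does \emph{not} descend through $p'$-quotients (a $p'$-kernel can kill exactly the $r$-part of $o(xy)$); the paper sidesteps this by working in the almost simple group and insisting that $r$ not divide the order of the Schur multiplier of the socle, so that the witness lifts. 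Your proposal never mentions this constraint on $r$.

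A secondary flaw: in the multi-component cases (vi)--(vii) of Theorem \ref{t:cyclic}, your plan to take $y$ inside a single component $T$ and "reduce the computation of $o(xy)$ to $N_G(T)/C_G(T)$" does not work as stated, because $x$ permutes the components transitively, so $xy$ does not normalize $T$ and its order is not determined by that quotient (it involves twisted products over the base group). The paper's Lemma \ref{l:wreath2} resolves this in one line: for any prime $r$ dividing $|T|$ there is an $r$-element $z\in E(G)$ with $y:=[x,z]$ a nontrivial $r$-element (its two "coordinates" lie in distinct components), and then $xy=x^z$ is a $p$-element, so $r\nmid o(xy)$ --- no reduction to the almost simple quotient, and no finite check of the triple-cover case, is needed.
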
 

Finally in the last section, we present an application of Theorem \ref{t:thmC} to the character theory of finite groups.

\vs

\noindent \textbf{Acknowledgements.}   Guralnick was partially supported by the NSF grant DMS-1901595 and a Simons Foundation Fellowship 609771. Tracey was supported by the EPSRC Postdoctoral Fellowship EP/T017619/1.

\section{Weakly subnormal $p$-subgroups}\label{s:wsn}

In this section, we determine the structure of finite groups with a weakly subnormal $p$-subgroup for some prime $p$. In particular, we will prove Theorems \ref{t:psolvable},  \ref{t:main1} and \ref{t:cyclic}. Recall that a subgroup $R$ of $G$ is weakly subnormal in $G$ if $R$ is not subnormal in $G$ but $R$ is subnormal in all of its proper overgroups $H$ in $G$.  Let $\mathcal{M}(R)$ be the set of maximal subgroups of $G$ containing $R.$

\begin{lem} \label{l:r-subs} 
Let $G$ be a finite group and let $p$ be a prime divisor of $|G|.$ Let $R$ be a  weakly subnormal $p$-subgroup of $G$. Then the following hold.
\begin{enumerate}[{\rm(i)}]
\item $G$ is the normal closure of $R$;
\item $\mathcal{M}(R)=\{M\}$ and $R \le O_p(M)$;
\item $R$ is subnormal in $R\Phi(G)$ and $O_{p'}(G)\cap \Phi(G)\leq Z(G)$;
\item $O_{p'}(G) \le Z(G) \cap \Phi(G)$; or $G=QR$ where $Q\unlhd G$ is a $q$-group for some prime $q$ and $R$ acts irreducibly
on $Q/\Phi(Q)$ and centralizes $\Phi(Q)$; and
\item $F(G) \le M$ if the first case in (iv) holds.
\end{enumerate} 
\end{lem}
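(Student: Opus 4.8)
The plan is to establish the five parts in order, using only elementary properties of subnormality together with Wielandt's Zipper Lemma. For (i), let $N=\langle R^G\rangle$ be the normal closure of $R$. Since $R$ is not subnormal it is proper in $G$, and if $N\ne G$ then $N$ is a proper overgroup of $R$, so $R$ is subnormal in $N$; as $N\trianglelefteq G$, this would make $R$ subnormal in $G$, a contradiction, so $N=G$. Part (ii) is immediate from the Zipper Lemma: as $R$ is subnormal in every proper overgroup but not in $G$, it lies in a unique maximal subgroup $M$, so $\mathcal{M}(R)=\{M\}$; and since $R$ is a subnormal $p$-subgroup of $M$, Wielandt's theory of subnormal $\pi$-subgroups gives $R\le O_p(M)$. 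I also record that $M$ is self-normalizing: otherwise $M\trianglelefteq G$, whence $O_p(M)$ is normal in $G$ and contains $R$, forcing $R$ to be subnormal.

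For (iii), first note $R\Phi(G)\ne G$, since $R\Phi(G)=G$ would give $\langle R\rangle=G$ by the nongenerator property of $\Phi(G)$, i.e.\ $R=G$. Thus $R\Phi(G)$ is a proper overgroup and $R$ is subnormal in it. Setting $D=O_{p'}(G)\cap\Phi(G)$, a normal $p'$-subgroup of $G$ with $RD\le R\Phi(G)$, subnormality passes to the intermediate subgroup $RD$, so $R\le O_p(RD)$; as $D$ is a normal $p'$-subgroup of $RD$, we get $[R,D]\le O_p(RD)\cap D=1$, so $R$ centralizes $D$. Since $D\trianglelefteq G$ and $G=\langle R^G\rangle$ by (i), every conjugate $R^g$ centralizes $D$, whence $D\le Z(G)$.

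Part (iv) is the heart of the lemma. Write $N=O_{p'}(G)$. The key observation is that for any $R$-invariant subgroup $K\le N$ with $RK\ne G$, the group $RK$ is a proper overgroup of $R$, so $R\le O_p(RK)$ and as above $R$ centralizes $K$. Dedekind's identity gives $RK\cap N=K$, so $RK=G$ forces $RN=G$ and $K=N$; hence the argument splits on whether $RN=G$. If $RN\ne G$, then even $K=N$ satisfies $RK\ne G$, so $R$ centralizes $N$, and the normal-closure argument of (iii) yields $N\le Z(G)$; and if $N\not\le\Phi(G)$, some maximal $L$ has $G=NL$, which (as $N$ is central) forces $L\trianglelefteq G$ with $G/L$ of prime order $s\ne p$, hence $R\le L$ and $L=M$ by (ii), contradicting that $M$ is self-normalizing. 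Thus $N\le Z(G)\cap\Phi(G)$, the first alternative. If instead $RN=G$, then $R$ centralizes every \emph{proper} $R$-invariant subgroup of $N$; choosing $R$-invariant Sylow subgroups (by coprime action), if $|N|$ had two prime divisors each would be proper, hence centralized, and as they generate $N$ we would get $G=R\times N$ and $R$ subnormal, absurd. So $N=:Q$ is a $q$-group for a single prime $q\ne p$ and $G=QR$. Since $\Phi(Q)<Q$ is $R$-invariant, $R$ centralizes $\Phi(Q)$; and on $W:=Q/\Phi(Q)$ every proper $R$-submodule lies in $W^R$, while $G=\langle R^G\rangle=\langle R^Q\rangle$ forces $[R,Q]=Q$ and hence $[R,W]=W$, so the coprime (Maschke) decomposition $W=W^R\oplus[R,W]$ gives $W^R=0$ and $R$ acts irreducibly, the second alternative.

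Finally, for (v), if the first alternative of (iv) holds then $O_{p'}(G)\le\Phi(G)\le M$, while $RO_p(G)$ is a proper $p$-subgroup containing $R$, hence contained in the unique maximal $M$, so $O_p(G)\le M$; as $F(G)=O_p(G)\cdot\big(\prod_{\ell\ne p}O_\ell(G)\big)$ with the $p'$-part inside $O_{p'}(G)\le M$, we get $F(G)\le M$. The main obstacle is isolating the single observation in (iv) --- that $R$ must centralize every proper $R$-invariant $p'$-subgroup --- and then extracting from it both the clean $RN$-dichotomy and, in the case $RN=G$, the single-prime and irreducibility conclusions via coprime action and the identity $[R,Q]=Q$.
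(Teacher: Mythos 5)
Your proof is correct and follows essentially the same route as the paper: the normal-closure argument for (i), Wielandt's Zipper Lemma for (ii), the fact that a subnormal $p$-subgroup centralizes normal $p'$-subgroups for (iii), the coprime-action dichotomy plus the maximal-complement argument for (iv), and the $RO_p(G)$ observation for (v). The only notable difference is that you spell out in full the structure of $G=QR$ (the single-prime reduction via $R$-invariant Sylow subgroups, and irreducibility on $Q/\Phi(Q)$ via $[R,Q]=Q$ together with the decomposition $W=W^R\oplus[R,W]$), details which the paper compresses into ``Now the structure of $G=QR$ follows easily.''
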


\begin{proof} Let $N$ be a normal subgroup of $G$ containing $R$. If $N\neq G,$ then  $R \le O_p(N) \le O_p(G)$ and so $R$ is subnormal in $G$, which is a contradiction. So (i) holds. 
Part (ii) follows from Wielandt's Zipper Lemma \cite[Theorem 2.9]{Isaacs}.    For part (iii), let $M$ be the unique maximal subgroup of $G$ containing $R$.  
Since $\Phi(G) \le M$ and $R$ is subnormal in $M$,  $R$ is subnormal in $R\Phi(G)$.    Moreover, since $[R,O_{p'}(G)\cap \Phi(G)]\leq [O_p(M),O_{p'}(M)]=1$, $O_{p'}(G) \cap \Phi(G)$ is centralized by $R$  and so by $G$.  Thus part (iii) holds.

If $R$ normalizes a $p'$-subgroup $Q$ but does not centralize it,  then 
$[Q,R,R]= [Q,R] \ne 1$ (see \cite[Lemma 4.29]{Isaacs}) and so $R$ is not subnormal in $[Q,R]R$, thus $G = [Q,R]R$.
By the theory of coprime group actions, there is a prime $q$ and an $R$-invariant Sylow $q$-subgroup $Q_0$ of
$Q$ satisfying the same condition and so we may assume that $Q$ is a $q$-group
and $G=QR$.   This is the case if $O_{p'}(G)>1$ is not central in $G$. Now the structure of $G=QR$ follows easily. In this case, $\Phi(G)=\Phi(Q)=Z(G)$ and $M=R\times \Phi(G)$.

Suppose that $O_{p'}(G)$ is central but not contained in the Frattini subgroup of $G$.   Then $G=O_{p'}(G)D$ for
some maximal subgroup $D$ of $G$.  Since $O_{p'}(G)$ is central in $G$, $D$ is normal in $G$.
 Since $D$ contains a Sylow $p$-subgroup of $G$, we may assume that
$R \le D$.  This implies that the normal closure of $R$ is contained in $D \ne G$, a contradiction.   Thus we have
proven (iv). 
Finally, since $R$ is subnormal in the $p$-subgroup $RO_p(G)$, $O_p(G) \le M$ and so (v) holds. 
\end{proof}

\begin{rem}\label{r:weakly subnormal}
Note that the converse of part (ii) holds, that is, a $p$-subgroup $R$ of $G$ is weakly subnormal in $G$ if and only if $\mathcal{M}(R)=\{M\}$; $R\leq O_p(M)$;
and $M$ is not  normal in $G$.
\end{rem}

\begin{rem}\label{r:Phi}
Let $R$ be a $p$-subgroup of a finite group $G$. 
\begin{enumerate}[{\rm(a)}]
\item Assume that $O_{p'}(G)$ is not central in $G$. Then $R$ is weakly subnormal in $G$ if and only if  $G$ is as described in the latter part of Lemma \ref{l:r-subs}(iv). In particular, $G$ is solvable.

\item Assuming that $O_{p'}(G)$ is central in $G$, then $R$ is weakly subnormal in $G$
if and only if $R\Phi(G)/\Phi(G)$ is weakly subnormal in $G/\Phi(G).$ 

\end{enumerate}
\end{rem}

Now Theorem \ref{t:psolvable} follows easily. 

\begin{proof}[\textbf{Proof of Theorem \ref{t:psolvable}}]
Let $G$ be a finite $p$-solvable group with $O_p(G)=1$. Assume that $R$ is a weakly subnormal $p$-subgroup of $G$.
Since $O_p(G)=1$ and $G$ is $p$-solvable, $F^*(G)$ is a nontrivial $p'$-group  and so $O_{p'}(G)$ is not central by Bender's theorem \cite[Theorem 9.8]{Isaacs}. Then by Lemma \ref{l:r-subs}(iii) and (iv), $G = QR$ with $Q=O_{p'}(G)$ a $q$-group, $x$ acting faithfully and irreducibly on $Q/\Phi(Q)$, and $\Phi(Q)\le Z(G)\le Q$. Since $(|x|,|Q|)=1$ and $x$ acts trivially on every proper $x$-invariant subgroup of $Q$, it follows that either $Q$ is elementary abelian, or 
$Q$ is special (i.e. $\Phi(Q)=[Q,Q]=Z(Q)$). The result follows.
\end{proof}

Next, we need the following result on the normalizers of Sylow subgroups of nonabelian simple groups.

 \begin{lem}  \label{l:selfnormal}  Let $S$ be a finite nonabelian simple group and let
  $p$ be  a prime dividing $|S|$.  Let $P$ be a 
 a Sylow $p$-subgroup of $S$.   If $p$ is odd, then $N_S(P) \ne P$.  If $p=2$, then one of the following holds:
\begin{enumerate}[{\rm(i)}]
 \item   $N_S(P) \ne P$; or
 \item   there exists an involution $z \in P$ with $z$ central in a Sylow $2$-subgroup of $\Aut(S)$ containing $P$ with
 $C_S(z) \ne P$; or 
 \item  $S \cong \Alt_6 \cong \LL_2(9)$; or
 \item   $S \cong \LL_2(r)$ with $r > 5$ a Fermat or Mersenne prime.
 \end{enumerate}
 \end{lem}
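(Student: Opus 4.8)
The plan is to reduce both assertions to the known structure of Sylow normalizers and involution centralizers in simple groups, invoking the classification of finite simple groups (CFSG) where unavoidable. Fix a Sylow $p$-subgroup $P$ of $S$ and a Sylow $p$-subgroup $Q$ of $\Aut(S)$ with $P\le Q$; since $S\trianglelefteq\Aut(S)$ we have $P=Q\cap S\trianglelefteq Q$, so $Z(Q)\cap P\neq 1$, and any involution $z\in Z(Q)\cap P$ lies in $S$ and is central in the Sylow $p$-subgroup $Q$ of $\Aut(S)$. This produces, for free, an involution of exactly the type appearing in (ii), and it is the entry point for the case $p=2$.

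For $p$ odd I would prove the sharper statement that $N_S(P)>P$ always. If $P$ is abelian this is immediate: $N_S(P)=P$ forces $C_S(P)=N_S(P)=P$, whence Burnside's normal $p$-complement theorem contradicts the simplicity of $S$. For nonabelian $P$ one runs through the CFSG families and checks that $N_S(P)$ strictly contains $P$: in defining characteristic $N_S(P)$ contains a Borel subgroup $P\rtimes T$ with $T\neq 1$; in cross characteristic one exhibits a nontrivial $p'$-element normalizing $P$ via the normalizer of a Sylow $\Phi_d$-torus and its (generically nontrivial) relative Weyl group; and the alternating and sporadic groups are handled from the explicit structure of $N_S(P)$. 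In every case statement~(i) holds, so for odd $p$ only conclusion~(i) is possible, as required.

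The substance is the case $p=2$, which I would argue by contradiction: suppose none of (i)--(iv) holds. As (i) fails, $N_S(P)=P$. Choosing $z\in Z(Q)\cap P$ as above, $z$ is central in $P$, so $P\le C_S(z)$, and since (ii) fails we must have $C_S(z)=P$, a $2$-group. Thus $S$ is a nonabelian simple group carrying a $2$-central involution whose centralizer is a $2$-group. Here I would invoke the classification of such groups (classically Suzuki's determination of the simple C.I.T. groups, equivalently extracted from CFSG via the dihedral and semidihedral Sylow $2$ analyses of Gorenstein--Walter and Alperin--Brauer--Gorenstein for the small-rank cases, and Bender's strongly embedded subgroup theorem for the characteristic-$2$ cases): the only possibilities are $\LL_2(2^n)$, $\mathrm{Sz}(2^{2n+1})$, and $\LL_2(q)$ with $q$ odd and dihedral Sylow $2$-subgroup. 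The first two families have a Frobenius Borel normalizer $N_S(P)=P\rtimes C>P$ and are excluded by $N_S(P)=P$. For $S=\LL_2(q)$, $q$ odd, the centralizer of the central involution of the dihedral Sylow $2$-subgroup is dihedral of order $q-\epsilon$ (with $\epsilon=\pm1$ determined by $q$ modulo $4$), so $C_S(z)=P$ forces $q-\epsilon$ to be a power of $2$; by Catalan's theorem this holds only when $q$ is a Fermat or Mersenne prime or $q=9$. Hence $S\cong\Alt_6$ or $S\cong\LL_2(r)$ with $r$ a Fermat or Mersenne prime, the bound $r>5$ being automatic since $\LL_2(5)\cong\Alt_5$ has a non-self-normalizing Sylow $2$-subgroup. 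This contradicts the failure of (iii) and (iv) and closes the argument.

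The main obstacle is this last step: the reliance on the classification of simple groups possessing a $2$-central involution with $2$-group centralizer, and the verification that the surviving families are exactly those recorded in (iii) and (iv) (in particular that a single such involution already places $S$ in the C.I.T. list). The odd-$p$ statement is comparatively routine once the abelian case is removed by Burnside, and a secondary point to check carefully is the elementary number theory behind ``$q\mp1$ a power of $2\Rightarrow q$ Fermat/Mersenne or $9$'' together with the matching of these $\LL_2(q)$ to precisely the self-normalizing (hence $r>5$) cases. As an alternative to the involution-centralizer route, one could instead read the entire $p=2$ statement off the explicit tables of Sylow $2$-normalizers in the finite simple groups.
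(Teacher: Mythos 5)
Your proposal reaches the correct conclusions but takes a genuinely different route from the paper, principally in the case $p=2$. The paper proceeds family by family through the CFSG: sporadic groups by inspection of maximal subgroups of odd index, alternating groups directly, Lie type in characteristic $2$ via involution centralizers (reducing to $\LL_3(2)\cong \LL_2(7)$), and Lie type in odd characteristic by observing that a $2$-central involution $z$ of $\Aut(S)$ either fails to be regular semisimple (whence it centralizes a unipotent element and (ii) holds) or forces $S\cong \LL_2(q)$, where $C_S(z)$ is a torus normalizer of order $q\pm 1$. You instead run a proof by contradiction: the failure of (i) and (ii) produces a $2$-central involution whose centralizer \emph{is} a Sylow $2$-subgroup, and you then appeal to the classification of simple groups admitting an involution with $2$-group centralizer (Suzuki's C.I.T. theorem). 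Your opening observation that $P=Q\cap S\trianglelefteq Q$ forces $Z(Q)\cap P\neq 1$ is exactly the device the paper uses implicitly, and your treatment of the surviving $\LL_2(q)$ cases (dihedral centralizer of order $q-\epsilon$, the Catalan-type analysis giving Fermat/Mersenne primes or $q=9$, and the exclusion of $q=5$ via $N_{\Alt_5}(P)=\Alt_4$) matches the paper's endgame. For odd $p$ the paper simply cites \cite[Corollary 1.2]{GMN}; your Burnside-plus-CFSG sketch is an outline of essentially that result, though your one-sentence treatment of the cross-characteristic case elides real difficulties that make the cited theorem nontrivial.

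Two concrete issues in the $p=2$ argument. First, the classification you quote is misstated: Suzuki's list of simple C.I.T. groups is $\LL_2(2^n)$, ${\rm Sz}(2^{2n+1})$, $\LL_2(q)$ with $q-\epsilon$ a power of $2$, \emph{and} $\LL_3(4)$, which you omit. This does not break the proof --- $\LL_3(4)$ has Borel subgroup $P{:}3$, so it is excluded by $N_S(P)=P$ exactly as your other characteristic-$2$ families are --- but as written your invoked list is incomplete. Second, and more substantively, the hypothesis you actually derive (a \emph{single} $2$-central involution with $2$-group centralizer) is strictly weaker than the C.I.T. hypothesis (all involution centralizers are $2$-groups), so Suzuki's theorem does not apply off the shelf; you flag this yourself as the main obstacle, and it is real. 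Verifying that the single-involution hypothesis yields the same list seems to require sweeping through the CFSG families and their involution centralizers --- which is precisely the computation the paper performs directly, so the reduction buys less than it appears to. Either supply a literature reference for the single-involution version or accept that this step absorbs the same case analysis as the paper's proof.
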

 
 \begin{proof}  If $p$ is odd, this  follows from \cite[Corollary 1.2]{GMN}.   Now assume that $p=2$.
 
 If $S$ is a sporadic group, then  this follows by inspection of the maximal subgroups of odd index (also by \cite{asmem}).
 Suppose that $S = \Alt_n, n \ge 5$.  If $n=5$, then (i) holds and if $n=6$, then (iii) holds.   If $n > 6$, then the centralizer of any involution
 in $S$ is not a $2$-group and so (ii) holds.
 
 Suppose that $S$ is a finite simple group of Lie type in characteristic $2$.    If (i) fails, then $S$ is defined
 over the prime field.   Considering centralizers of involutions (e.g. see \cite{LSbook}), we see that (ii) holds
 unless $S=\LL_3(2) \cong \LL_2(7)$.  
 
 Finally consider the case that $S$ is a finite simple group of Lie type over the field of $q$ elements with $q$ odd.
 Let $z \in P$ be an involution that is in the center of a Sylow $2$-subgroup of $\Aut(S)$ containing $P$.
 If $z$ is not regular semisimple, then $z$ centralizes unipotent elements and so (ii) holds.   If $z$ is regular
 semisimple, then $S \cong \LL_2(q)$ (and $z$ corresponds to an element of order $4$ in $\SL_2(q))$.
 If $q=5$, then (i) holds.   So assume $q > 5$. 
 Then $C_S(z)$ is the normalizer of a torus (split if $q \equiv 1 \mod 4$ and nonsplit otherwise).    Thus, 
 $C_S(z)$ is a $2$-group if and only if $q \pm 1$ is a power of $2$ and (iv) holds. 
 \end{proof}

 \begin{lem} \label{l:wreath}  Let a finite $p$-group $R$  act on a finite group $X= M_1 \times \ldots \times M_t$ with $M_i \cong M$
and $t > 1$.  Assume that $R$ transitively permutes the $M_i$.   Let $G=XR$.  Then $R$ is subnormal in $G$ if and only 
if $M$ is a $p$-group.
\end{lem}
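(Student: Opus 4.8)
The plan is to reduce the whole equivalence to a single clean statement about $O_p(G)$, namely that $R\le O_p(G)$ if and only if $M$ is a $p$-group. The point is that a $p$-subgroup $R$ of a finite group $G$ is subnormal in $G$ exactly when $R\le O_p(G)$: if $R\le O_p(G)\trianglelefteq G$ then $R$ is subnormal in the $p$-group $O_p(G)$ (every subgroup of a finite $p$-group is subnormal) and hence subnormal in $G$; conversely, if $R=R_0\trianglelefteq R_1\trianglelefteq\cdots\trianglelefteq R_n=G$ is a subnormal chain, then by induction on the defect $R\le O_p(R_{n-1})$, and $O_p(R_{n-1})$ is characteristic in $R_{n-1}\trianglelefteq G$, hence a normal $p$-subgroup of $G$, so $R\le O_p(R_{n-1})\le O_p(G)$. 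The easy direction is then immediate: if $M$ is a $p$-group, so is $X=M_1\times\cdots\times M_t$, hence $G=XR$ is a $p$-group, $O_p(G)=G$, and $R$ is subnormal.

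For the substantive direction I would assume $R\le O_p(G)=:Y$ and show $M$ is a $p$-group. First, $G=XR\le XY\le G$ gives $G=XY$; and since $R\le Y$ and $R$ permutes the factors transitively, the normal $p$-subgroup $Y$ also permutes $\{M_1,\dots,M_t\}$ transitively (note $X$ stabilizes each factor, so the permutation homomorphism $\pi\colon G\to\operatorname{Sym}\{M_1,\dots,M_t\}$ has $X\le\ker\pi$ and image the transitive $p$-group $\pi(Y)$, forcing $t$ to be a $p$-power). The key idea is to kill $O_p(X)=\prod_i O_p(M_i)$, which is characteristic in $X$ and hence normal in $G$: passing to $\bar G=G/O_p(X)$ we get $\bar X=\prod_i\bar M_i$ with $\bar M_i\cong M/O_p(M)$ and $O_p(\bar X)=1$, while $\bar Y$ is still a normal $p$-subgroup of $\bar G$ permuting the $\bar M_i$ transitively and $\bar G=\bar X\bar Y$. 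Now $\bar Y\cap\bar X$ is a normal $p$-subgroup of $\bar X$, so $\bar Y\cap\bar X\le O_p(\bar X)=1$; since both $\bar X$ and $\bar Y$ are normal in $\bar G$ this yields $[\bar X,\bar Y]\le\bar X\cap\bar Y=1$, i.e. $\bar Y$ centralizes $\bar X$. Consequently every element of $\bar Y$ fixes each $\bar M_i$ setwise, so $\bar\pi(\bar Y)=1$. If $M$ were not a $p$-group we would have $\bar M\neq1$, the factors $\bar M_i$ would be distinct nontrivial subgroups, and the transitivity of $\bar Y$ on $t>1$ of them would contradict $\bar\pi(\bar Y)=1$. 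Hence $\bar M=1$, i.e. $M=O_p(M)$ is a $p$-group.

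I expect no serious obstacle here; the argument is essentially a structural bookkeeping once the correct reduction is identified. The one step deserving care is verifying that the transitive permutation action genuinely descends to $\bar G$ (which holds because $O_p(X)\le X\le\ker\pi$ and $O_p(X)=\prod_i O_p(M_i)$ is itself permuted by $G$, so the factors of $\bar X$ are permuted exactly as the $M_i$ are), and confirming that the hypothesis ``$M$ is not a $p$-group'' is precisely what makes $\bar M\neq1$ and the $\bar M_i$ pairwise distinct. The genuine content of the proof is the single observation that $Y\cap X\le O_p(X)$, which becomes trivial after factoring out $O_p(X)$ and thereby forces $O_p(G)$ to centralize $X$ modulo $O_p(X)$ — incompatible with transitivity unless the factors are $p$-groups.
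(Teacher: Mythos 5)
Your proof is correct, but it takes a genuinely different route from the paper's. The paper argues the contrapositive via Wielandt's commutator criterion for subnormality: after reducing to $O_p(X)=1$ (as you do), it replaces $M$ by a minimal characteristic subgroup, so that either $M$ is an $r$-group with $r\neq p$ or $M$ is nonabelian simple, and then shows in each case that $[X,R,R]=[X,R]\neq 1$ is not a $p$-group (using the coprime-action identity $[Q,R,R]=[Q,R]$ in the first case, and minimal normality of $X$ to force $[X,R]=X$ in the second), which obstructs any subnormal chain. You instead convert the problem entirely into the statement ``a $p$-subgroup is subnormal iff it lies in $O_p(G)$,'' and then run a normal-subgroup argument in $\bar G=G/O_p(X)$: since $\bar Y:=\overline{O_p(G)}$ and $\bar X$ are both normal and $\bar Y\cap\bar X\leq O_p(\bar X)=1$, they centralize each other, so $\bar Y$ fixes each factor $\bar M_i$ setwise, contradicting its transitivity on $t>1$ distinct nontrivial factors. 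Your version avoids both the case division and the coprime-action lemma, and is arguably more elementary and uniform; the paper's version fits its broader toolkit (iterated commutators and coprime action recur in the proofs of Lemmas 2.1 and 2.5) and exhibits the explicit commutator obstruction to subnormality, which is what gets reused elsewhere. One small point worth keeping in your write-up: the verification that the $\bar M_i$ are pairwise distinct when $\bar M\neq 1$ (via trivial pairwise intersections of direct factors) is exactly the step that makes the final contradiction valid, and you handle it correctly.
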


\begin{proof}  If $M$ is a $p$-group, then so is $G$ and hence every subgroup of $G$ is subnormal. 
For the remaining, suppose that $M$ is not a $p$-group.  Then we may assume that $O_p(M)=O_p(X)=1$.   We can replace 
$M$ by a minimal characteristic subgroup and so assume that either $M$ is an $r$-group for some prime $r \ne p$
or $M$ is a nonabelian simple group.  In the first case, $[X,R,R]=[X,R]$ is a nontrivial $r$-group and so $R$ is not subnormal.
In the second case, $[X,R]=X$ since $[X,R] \le X$ is normal in $G$ and $X$ is a minimal normal subgroup of $G$. It follows as above that $R$ is not subnormal. 
\end{proof} 

\begin{lem} \label{l:E(G)}   Let $p$ be a prime and $G$ a finite group with $O_p(G)=1$.   Assume that $R$ is a weakly
subnormal $p$-subgroup of $G$ and that $G$ is not $p$-solvable.   Then $G=E(G)R$, $\Phi(G) \le E(G)$ and all
components of $G$ are conjugate.  Moreover one of the following holds:
\begin{enumerate}[{\rm(i)}]
\item  $E(G)$ is quasisimple; or 
\item  $p=2$,  and $R$ acts transitively on the components   and if $S$ is a component of $G$, then
then 
$S  \cong \LL_2(r)$ with $r > 5$ a Fermat or Mersenne prime or  $S \cong \LL_2(9)$  or 
$S$ is a triple cover of $\LL_2(9)$ and  $Z(E(G))=Z(G)$ has order $3$.
\end{enumerate} 
\end{lem}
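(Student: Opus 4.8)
The plan is to combine generalized Fitting theory with the defining property of weak subnormality: first pin down $F^*(G)$ and show $G=E(G)R$ with all components conjugate, and then reduce the multi-component case to single-component data governed by Lemma \ref{l:selfnormal}. First I would fix the basic structure. Since $G$ is not $p$-solvable it is not solvable, so the second alternative of Lemma \ref{l:r-subs}(iv) (which yields a solvable $G=QR$) is impossible; hence $O_{p'}(G)\le Z(G)\cap\Phi(G)$. As $O_p(G)=1$, $F(G)$ is a $p'$-group, so $F(G)\le O_{p'}(G)\le Z(G)$, and since $\Phi(G)$ is nilpotent, $\Phi(G)\le F(G)\le Z(G)$. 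If $E(G)=1$ then $F^*(G)=F(G)$ is central, forcing $G=C_G(F^*(G))\le F^*(G)$ and $G=F(G)$ nilpotent, contradicting non-$p$-solvability; thus $E(G)\ne 1$. Because $F(G)\le Z(G)$, a standard computation with $C_G(F^*(G))\le F^*(G)$ and Dedekind's law gives $C_G(E(G))=F(G)=Z(E(G))$, a central $p'$-group.

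The engine for everything that follows is the standard fact that a normal $p$-subgroup centralizes any normal central product of quasisimple groups: if $X\trianglelefteq H$ is such a product and $D\trianglelefteq H$ is a $p$-group, then $[X,D]\le X\cap D\le O_p(X)\le Z(X)$, and since $X$ is perfect the map $x\mapsto[x,d]$ is a homomorphism into the abelian group $Z(X)$, hence trivial, so $D\le C_H(X)$. I would apply this to get $G=E(G)R$: if $H:=E(G)R\ne G$, then $R$ is subnormal in the proper overgroup $H$, so $R\le O_p(H)\le C_H(E(G))\le F(G)\le Z(G)$, whence $R\le O_p(G)=1$, a contradiction. Then $G/E(G)\cong R/(R\cap E(G))$ is a $p$-group, so the $p'$-groups $\Phi(G)$ and $F(G)$ have trivial image in it, giving $\Phi(G)\le E(G)$, $F(G)\le E(G)$ and $F^*(G)=E(G)$.

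For transitivity of $R$ on the components (hence their conjugacy): if $R$ had at least two orbits, then for a single orbit $\mathcal{O}$ the product $E_{\mathcal{O}}$ of its components is an $R$-invariant central product of quasisimple groups with $E_{\mathcal{O}}R$ proper, so $R\le O_p(E_{\mathcal{O}}R)$ centralizes $E_{\mathcal{O}}$; running over all orbits, $R$ centralizes $E(G)$, again forcing $R=1$. So $R$ is transitive; when there is a single component this is alternative (i), and Lemma \ref{l:wreath} confirms that $R$ is indeed non-subnormal in the remaining cases.

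The hard part is alternative (ii), with $n>1$ components permuted transitively, and here I would pass to $\bar G=G/Z(E(G))=\bar E\bar R$ with $\bar E=\bar S_1\times\dots\times\bar S_n$ a \emph{genuine} direct product of nonabelian simple groups (so distinct factors meet trivially) and $\bar R\cong R$. The key construction starts from an $R_1$-invariant subgroup $\bar U_1\le\bar S_1$ that is \emph{not} a $p$-group (here $R_1=N_R(S_1)$), and forms the $R$-invariant product $\bar U=\prod_i\bar U_i$. Fixing a Sylow $p$-subgroup $\bar P\ge\bar R$ of $\bar G$, one checks that $\bar U\bar R$ is a proper overgroup of $\bar R$ (indeed $\bar U\bar R\cap\bar E=\bar U<\bar E$), so $\bar R\le O_p(\bar U\bar R)$; since $\bar U\trianglelefteq\bar U\bar R$ this gives $[\bar U,\bar R]\le\bar U\cap O_p(\bar U\bar R)\le O_p(\bar U)$, i.e. $\bar R$ centralizes $\bar U/O_p(\bar U)=\prod_i\bigl(\bar U_i/O_p(\bar U_i)\bigr)$. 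But $\bar R$ permutes these direct factors transitively, which is incompatible with centralizing them unless each is trivial — forcing $\bar U_1$ to be a $p$-group after all. Taking $\bar U_1=N_{\bar S_1}(\bar P\cap\bar S_1)$ eliminates alternative (i) of Lemma \ref{l:selfnormal}, and taking $\bar U_1=C_{\bar S_1}(z)$ for a central involution $z$ eliminates alternative (ii); Lemma \ref{l:selfnormal} then forces $p=2$ (for $p$ odd its alternative (i) always holds) and $\bar S_1\cong\LL_2(9)$ or $\bar S_1\cong\LL_2(r)$ with $r>5$ a Fermat or Mersenne prime. Finally I would lift to covers: $Z(S_1)\le F(G)$ is a $2'$-group, so by the Schur multipliers $S_1$ is $\LL_2(r)$, $\LL_2(9)$, or the triple cover $3\cdot\LL_2(9)$; and since $R$ centralizes the central subgroup $F(G)=Z(E(G))$ while permuting the order-$3$ centers $Z(S_i)$ transitively, these are all identified, so $Z(E(G))=Z(G)$ has order $3$. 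The main obstacle, and the step requiring the most care, is precisely this multi-component reduction: verifying that the product subgroups $\bar U$ give genuinely proper overgroups and correctly packaging Lemma \ref{l:selfnormal} so as to discard its two generic alternatives.
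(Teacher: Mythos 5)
Your proof is correct, and it takes a genuinely different technical route from the paper's, even though the skeleton (establish $G=E(G)R$ and transitivity on components, then feed Lemma \ref{l:selfnormal} into a wreath-type obstruction, then sort out covers) is necessarily the same. The paper proves $G=E(G)R$ by a Frattini argument: an $R$-invariant Sylow $p$-subgroup $D$ of $E(G)$ has $N_G(D)$ inside the unique maximal subgroup $M$ supplied by the Zipper Lemma, while $G=E(G)N_G(D)$, a contradiction; you instead use the observation that a normal $p$-subgroup centralizes any normal semisimple subgroup (perfectness plus commutators landing in the centre), applied to $O_p(E(G)R)$. This avoids invoking $M$ altogether and uses only the definition of weak subnormality; the same trick then gives transitivity on components, where the paper just says ``the same argument shows''. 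For the multi-component case the paper stays inside $G$, takes $N_{E(G)}(D)$ resp.\ $C_{E(G)}(z)$, and cites Lemma \ref{l:wreath} to contradict subnormality (for $p=2$ the final contradiction being that $G=C_{E(G)}(z)R$ forces $z\in O_2(G)=1$); you pass to $G/Z(E(G))$, where the layer is an honest direct product, and re-derive the wreath obstruction inline: $\bar R\le O_p(\bar U\bar R)$ forces $[\bar U,\bar R]\le O_p(\bar U)$, which is incompatible with transitive permutation of nontrivial factors $\bar U_i/O_p(\bar U_i)$. The two test subgroups (Sylow normalizer, centralizer of a suitable involution) and the way Lemma \ref{l:selfnormal} is harvested are identical to the paper's, as is the Schur-multiplier and centre-identification step at the end. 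What each approach buys: the paper's route is shorter because Lemma \ref{l:wreath} is available as a standalone tool, while yours is more self-contained (it never uses Lemma \ref{l:wreath} or the uniqueness of $M$), and the passage to the quotient makes the direct-product bookkeeping cleaner.

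Two points you gloss over, both easily repaired and at roughly the level of detail the paper itself omits. First, you need $p$ to divide $|S|$ for every component $S$: otherwise $\bar P\cap\bar S_1=1$, your first test subgroup is all of $\bar S_1$, and $\bar U\bar R=\bar G$ is not a proper overgroup, so your mechanism gives nothing. This holds because otherwise $E(G)\le O_{p'}(G)\le Z(G)$, which is absurd; the paper records this fact explicitly at the outset. Second, the identity $C_G(E(G))=F(G)=Z(E(G))$ is asserted before you know $F(G)\le E(G)$; at that stage only $C_G(E(G))=F(G)\le Z(G)$ is available (and that is all you actually use there), with $F(G)=Z(E(G))$ becoming correct once you have shown $F(G)\le E(G)$.
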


\begin{proof}  Since $G$ is not $p$-solvable, by Lemma \ref{l:r-subs}, $O_{p'}(G)\leq Z(G)\cap \Phi(G)$. Furthermore, as $O_p(G)=1$,  it follows that $F(G) \le O_{p'}(G)=Z(G)\le \Phi(G)$, whence $F(G)=Z(G)=\Phi(G)$. Moreover,
 $E(G)$ is nontrivial and $p$ divides the order of every component of $G$. 

We will prove first that $G=E(G)R$.  If not, then $E(G)$ must be contained in the unique maximal subgroup of $G$ containing $R$ (say $M$). Let $D$ be a Sylow $p$-subgroup of $E(G)$ normalized by $R$.   Then 
$R \le E(G)R \ne G$ and $R \le N_G(D)$. Since $D$ is not normal in $G$, we must therefore have $N_G(D)\le M$. But $G=E(G)N_G(D)$ by the Frattini argument, which contradicts $E(G),N_G(D)\le M$.  The same argument show that $G=AR$, where $A$ is product of quasisimple groups normalized by $R$
and so all components of $G$ are conjugate. 
Since $G=E(G)R$ and $F(G)=Z(G)=\Phi(G)$, we deduce also that $\Phi(G)=Z(E(G))$.

Now $R$ normalizes $N:=N_{E(G)}(D)$ where $D$ is an $R$-invariant Sylow $p$-subgroup of $E(G)$.   Suppose that $E(G)$ is not quasisimple.
 By Lemma \ref{l:selfnormal},
if $p \ne 2$, $N/D$ is nontrivial. Since $E(G)$ is not quasisimple, Lemma \ref{l:wreath} implies that $R$ is not subnormal in $NR$.   

Similarly if $p=2$ and $E(G)$ is not quasisimple, then the same argument shows that an $R$-invariant Sylow $2$-subgroup $D$ of $E(G)$
is self normalizing.  Moreover, aside from the groups in the conclusions, by Lemma \ref{l:selfnormal}, there exists an involution
in $D\le E(G)$ which is centralised by $R$, and such that $C_{E(G)}(z)$ properly contains $D$. By Lemma \ref{l:wreath},  $R$ is not subnormal in 
$C_{E(G)}(z)R$. Thus, $G=C_{E(G)}(z)R$. Since $O_2(G)=1$, we have a contradiction.  Thus, the only possible components are odd central covers
of the simple groups listed in Lemma \ref{l:selfnormal}(iii) and (iv).  The only group with an nontrivial odd cover is $\Alt_6 \cong \LL_2(9)$.  If
the triple cover of $\Alt_6$ is a component, it follows that $Z(E(G)) \le Z(G)$ and so has order $3$. 
\end{proof}  

This now gives a classification of all groups containing a weakly subnormal $p$-group and the maximal such subgroups.

\begin{proof}[\textbf{Proof of Theorem \ref{t:main1}}]
Let $G$ be a finite group with $O_p(G)=1$. Assume that $G$ is not $p$-solvable and that $G$ has a weakly subnormal $p$-subgroup $R$. Let $M$ be the unique maximal subgroup of $G$ containing $R$. Let $P$ be a Sylow $p$-subgroup of $G$ containing $R$. Then $R\leq P\leq M$. Since $R$ is subnormal, we have $R\leq O_p(M)\leq P\leq M$.  As $O_p(G)=1$,  $M$ is the unique maximal subgroup of $G$ containing $P$ and $O_p(M)$. 

In view of Lemma \ref{l:E(G)}, if $E(G)$ is quasisimple,  we can pass to $G/\Phi(G)$, where $\Phi(G)=Z(E(G))$, and then apply the main results of \cite{BBGH}, (specifically, Corollaries $4$ and $6$). 
If $E(G)$ is not quasisimple, then $p=2$ and part (ii) of Lemma \ref{l:E(G)} holds yielding the last two cases of the theorem.
\end{proof}

  Note that if $R$ is a weakly subnormal $p$-subgroup with $M$ the unique maximal subgroup of 
$G$ containing $R$, then $R \le O_p(M)$, and $O_p(M)$ is also a weakly subnormal $p$-subgroup. Thus, we have classified all pairs $(G,R)$
where $R$ is a maximal weakly subnormal $p$-subgroup.

\begin{proof}[\textbf{Proof of Theorem \ref{t:cyclic}}]
As already noted  $R:=\langle x\rangle \le P \le M$ where $M$ is the unique maximal subgroup containing $R$, $P\in\rm{Syl}_p(G)$, and $R \le O_p(M)$.
If $G$ is $p$-solvable, then Theorem \ref{t:psolvable} applies.  So assume that $G$ is not $p$-solvable.  
One now has to check the cases in Theorem \ref{t:main1}. In the small cases, one checks the result directly (using GAP). We now discuss the infinite families coming from Theorem \ref{t:main1}. For ease of notation, we will assume (as we may) that $Z(G)=1$.  

Suppose first that $G$ lies in \cite[Table E]{BBGH}. If $G=A_p$ then the result is clear, so assume that $G$ is of Lie type. If $G$ has twisted rank greater than $1$ and $x$ is not regular semisimple, then $x$ is contained in at least two maximal subgroups of $G$ by \cite{GT}. Otherwise, $P$ is cyclic, and there is a unique conjugacy class of elements of order $|R|$ in $G$ (again, see \cite{GT}). Thus, $x$ is contained in a unique maximal subgroup of $G$ if and only if $M^G$ is the unique conjugacy class of maximal subgroups of $G$ with order divisible by $o(x)$, and $M$ is the unique conjugate of $M$ containing $x$. One can now combine the proofs in \cite[Section 6]{BBGH} and \cite[Tables 17--24]{GT} to deduce the conditions in Table \ref{tab:thm3}. For example, if $G=\LL_n(q)$ with $p=q_n$, then $n>3$ is prime and $f$ is odd by \cite[Table E]{BBGH}; while $|R|$ does not divide $q^{n/k}-1$ for any prime $k$ dividing $f$ by \cite[Table 17]{GT}. Further, we see from the proof of \cite[Proposition 6.2]{BBGH} that if $f=1$, then either $|R|>p$, or $p\neq (n-1)/2$, or $-p$ is a non-square modulo $r$. The remaining cases are entirely similar.

We now move on to the infinite families not in \cite[Table E]{BBGH}. If $p=2$, $G=\PGL_2(q)$ [respectively $G=\LL_2(q).2_3$] and $q$ is a Fermat prime [resp. the square of a Fermat prime], then every $2$-element of $G$ normalises a parabolic subgroup, whence is contained in at least two maximal subgroups by \cite{GT}.

If $p=2$ and $G=\LL_2(q)$ or $\PGL_2(q)$ with $q\equiv -1 \pmod{4}$ prime, then $|P|\geq 16$ by \cite[Corollary 6]{BBGH}, so $(q+1)_2\geq 8$ if $G=\PGL_2(q)$, and $(q+1)_2\geq 16$ if $G=\LL_2(q)$. Also, $|R|\geq 8$, since all elements of $G$ of order dividing $4$ are contained in a conjugate of a maximal $S_4$. Indeed, one can see from the list of maximal subgroups of $\LL_2(q)$ and $\PGL_2(q)$ that such a maximal $S_4$ subgroup always exists, since $q\equiv -1 \pmod{8}$. Thus, we see that $q\equiv -1 \pmod{8}$ and $|R|\geq 8$. 

The final case when $P$ is not cyclic is when $G$ is a rank $1$ simple group of Lie type in characteristic $p$.    
If $G =\LL_2(p^a)$, then we note that every $p$-element is contained  in a conjugate of $\LL_2(p)$  and so if $a > 1$, is not contained
in a unique maximal subgroup.  If $G=\UU_3(q)$ with $q$ odd, then it easy to see (or apply \cite{GT}) that every unipotent element  is conjugate
to an element of either $\textrm{SO}_3(q)$ or the stabilizer of a nondegenerate hyperplane.   If $q$ is even, then as $G$ is not solvable, $q \ge 4$.
But every element of order $4$ is conjugate to an element of $\UU_3(2)$ and so is not contained in a unique maximal subgroup.  
If $G$ is a Suzuki group, then any element of order $4$ normalizes a nonsplit torus.   If $G={^2}{\rm G}_2(3^a), a > 1$, then any unipotent element is conjugate to an element in ${^2}{\rm G}_2(3)$.   So the only examples are $\LL_2(p)$ with $p$ prime and $p \ge 5$.  This completes the proof of the theorem.
\end{proof}

\begin{proof}[\textbf{Proof of Corollary \ref{c:cyclic34}}] Let $G$ and $R$ be as in the statement of the corollary, and assume that $G$ is insolvable. The case $|R|=2$ is clear so assume first that $|R|=3$. Then by Theorems \ref{t:main1} and \ref{t:cyclic}, $G/O_3(G)$ is isomorphic to $\LL_2(2^e)$, with $e$ odd. If $e$ is not prime, then $|R|=3$ divides $|\LL_2(2^{e/k})|$ for all prime divisors $k$ of $e$. Since all elements of order $3$ are conjugate in $|\LL_2(2^e)|$ in this case, we see that $R$ is contained in more than one maximal subgroup -- a contradiction. Thus, $e$ is an odd prime, as needed. 

Suppose next that $|R|=|\langle x\rangle|=4$. Then by Theorem \ref{t:cyclic}, we have $\overline{G}=E(\overline{G})R\le A\wr \langle \sigma\rangle$, where $o(\sigma)\in\{2,4\}$, and $A\in\{\PGL_2(7),M_{10},\LL_2(q),\PGL_2(q)\}$ with $q>7$ a Mersenne prime. Further, $|Z(G)|$ divides $3$, and $N_{\overline{G}}(\soc(A))/C_{\overline{G}}(\soc(A))\cong A$. 
It follows that $\overline{x}=(y_1,\hdots,y_t)\sigma$, where $y:=\prod_iy_i$ is a $2$-element of $A$ which generates $\soc(A)/A$ modulo $A$. Since $y:=\prod_iy_i$ has order $o(\overline{x})/o(\sigma)$ and $o(\overline{x})=4$, we must have $y=1$ or $o(y)=2$, and $G\neq M_{10}$. 
By replacing $\overline{G}$ by an $\Aut(\overline{G})$-conjugate, we may assume that  if $y=1$, then $\overline{x}=\sigma$; while if $o(y)=2$, then $\overline{x}=(y_1,1)\sigma$. Clearly $\overline{x}$ is not contained in a unique maximal subgroup in the former case, so we may assume that $\overline{x}=(y_1,1)\sigma$, with $|y_1|=2$. Then in each of the cases $A\in\{\PGL_2(7),\LL_2(q),\PGL_2(q)\}$, $y_1$ normalises at least two maximal subgroups $M_1$ and $M_2$ of $\soc(A)$. Thus, $\overline{x}$ lies in the distinct maximal subgroups $N_{\overline{G}}(M_1^2)$ and $N_{\overline{G}}(M_2^2)$ of $\overline{G}$. This final contradiction completes the proof. \end{proof} 
  
We close this section which yields some information for groups with more than one component.

\begin{lem} \label{l:wreath2}   Let $G$ be a finite group and let $Q$ be a component of $G$.
Suppose that $x \in G$ does not normalize $Q$.   If $r$ is any prime dividing $|Q|$, there exists
an $r$-element $y \in E(G)$ with $[x,y]$ a nontrivial $r$-element.
\end{lem}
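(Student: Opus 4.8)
The plan is to exploit the fact that $x$ carries the component $Q$ to a \emph{different} component, together with the standard fact that distinct components of a finite group centralize one another. Since $x$ does not normalize $Q$, the conjugate $Q^x := x^{-1}Qx$ is again a component of $G$ (conjugates of components are components), and $Q^x \neq Q$. As distinct components commute elementwise, we have $[Q,Q^x]=1$, and consequently $Q\cap Q^x \le Q\cap C_G(Q)=Z(Q)$. These two facts are the whole engine of the argument.

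First I would manufacture a suitable $r$-element inside $Q$. Writing $S=Q/Z(Q)$ for the simple quotient, the goal is to choose an $r$-element $y\in Q$ with $y\notin Z(Q)$; such a $y$ lies in $Q\le E(G)$, which is the membership required by the statement. Granting for the moment that such a $y$ exists, I would then compute the commutator directly. With the conventions $a^x=x^{-1}ax$ and $[x,y]=x^{-1}y^{-1}xy$ one has $[x,y]=(y^x)^{-1}y$, where $y\in Q$ and $(y^x)^{-1}\in Q^x$. Since $Q$ and $Q^x$ are distinct components they commute, so $[x,y]$ is a product of two commuting $r$-elements and is therefore itself an $r$-element. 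Moreover $[x,y]=1$ would force $y=y^x\in Q\cap Q^x\le Z(Q)$, contrary to the choice $y\notin Z(Q)$; hence $[x,y]$ is a nontrivial $r$-element, as desired. (Incidentally $[x,y]\in Q^x\cdot Q\le E(G)$ as well, though the statement does not ask for this.)

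The one point requiring care, and the step I would flag as the main obstacle, is the existence of a non-central $r$-element of $Q$ for \emph{every} prime $r$ dividing $|Q|$ --- equivalently, the assertion that $r\mid |S|$, so that a Sylow $r$-subgroup of $Q$ is not contained in $Z(Q)$. For primes $r\nmid |Z(Q)|$ this is immediate. For the remaining primes one uses that $Z(Q)$ is a quotient of the Schur multiplier $M(S)$, combined with the standard (classification-free) fact that every prime dividing $|M(S)|$ divides $|S|$; hence every prime dividing $|Z(Q)|$ divides $|S|$, and therefore every prime dividing $|Q|$ divides $|S|=|Q/Z(Q)|$. This guarantees the required $y$, and the rest of the proof is formal manipulation with commuting components.
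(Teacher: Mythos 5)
Your proof is correct and is essentially the paper's own argument: the paper also takes an $r$-element $b$ in a component moved by $x$ (there $b\in Q^x$ rather than $Q$) and observes that $[x,b]$ is a product of commuting $r$-elements lying in distinct components, hence a nontrivial $r$-element. If anything, you are more careful than the paper, which silently assumes a \emph{noncentral} $r$-element of the component exists for every prime $r$ dividing $|Q|$; your Schur-multiplier argument fills exactly that gap.
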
  

\begin{proof}  
There is no loss of generality in assuming that $E(G)$ is a central product of the conjugates of $Q$, and that $x$  permutes the conjugates of $Q$ transitively. 
It then follows that $x$ induces an automorphism of $E(G)$ of the form $a \rho$ where $a$ normalises $Q$, and
$\rho$ permutes the conjugates of $Q$ in a cycle of length $s \ge 2$.

If   $b \in Q^x$,  then $[x,b] = b^{-a\rho}b$.   Since $b^{-a\rho}$ and $b$ are contained
in distinct components,  we see that if $b$ is an $r$-element,  then $[x,b]$ is a nontrivial $r$-element.
\end{proof}

\section{Reduction results  for Baer-Suzuki type problems}\label{s:reduction}

Let $p$ be a prime and let $G$ be a finite group. Let $x\in G$ be a $p$-element. Let $\mathcal{P}$ be a property of the pair $(G,x)$ such that if $H$ is any subgroup of  $G$ containing $x$, then the pair $(H,x)$ also satisfies property $\mathcal{P}$. We call such property a Baer-Suzuki property.

We call the following problem a Baer-Suzuki type problem $\PP.$

\begin{problem} If the pair $(G,x)$ satisfies the Baer-Suzuki property $\PP$, then $x\in O_p(G)$.

\end{problem}

Since $O_p(G)$ is nilpotent, $x\in O_p(G)$ if and only if $\langle x\rangle$ is subnormal in $G$. 
Suppose that the pair $(G,x)$ is a counterexample to the Baer-Suzuki type problem $\PP$ as above with $|G|$ minimal. Then $x\in O_p(H)$ for every proper subgroup $H$ of $G$ containing $x$ but $x\not\in O_p(G)$. In other words,  the cyclic subgroup $\langle x\rangle$ is weakly subnormal in $G$. By Wielandt's zipper lemma, $G$ has a unique maximal subgroup, say $M$, containing $x$.

If the Baer-Suzuki property $\PP$ satisfies an additional condition that the pair $(G,x)$  satisfies $\PP$ if and only if the pair $(G/O_p(G),xO_p(G))$ satisfies $\PP$, then  we may assume that $O_p(G)=1$. In this situation, we can apply results in Theorems \ref{t:psolvable} and \ref{t:cyclic} to determine the structure of $G$.

\begin{prop}\label{p:reduction}
Let the pair $(G,x)$ be a counterexample to the Baer-Suzuki type problem $\PP$ with $|G|$ minimal. Assume that $O_p(G)=1.$ Let $M$ be the unique maximal subgroup of $G$ containing $x$. Then  $G$ is either solvable and the structure of $G$ is given in Theorem \ref{t:psolvable} or  $G$ is not $p$-solvable and  one of  the following holds.

\begin{enumerate}[{\rm (1)}]
\item If $p>5$, then $G$ is quasisimple, a Sylow $p$-subgroup of $G$ is cyclic, $\langle x\rangle$ is any nontrivial  $p$-subgroup and $M=N_G(\langle x\rangle)$. 
Moreover, $(G/Z(G),M/Z(G))$ is given in Table \ref{tab:thm3}.

\item If $p=5$, then either $G$ is described as in \emph{(1)} or $G={}^2\textrm{B}_2(32).5$, $\langle x\rangle$ is a cyclic group of order $25$ not contained in the socle, and $M$ is the normalizer of a nonsplit torus of order $25$.

\item If $p=3$, then $G$ is as in  \emph{(1)} or $G=\LL_2(8).3$, $\langle x\rangle$ is any cyclic  group of order $9$ not contained in the socle and $M$ is the normalizer of the nonsplit torus of order $9.$

\item Assume $p=2$. Then  one of the following cases holds.
\begin{enumerate}[{\rm(i)}]

\item $p=2$, $G={\rm M}_{10}$, $\langle x\rangle$ is any group of order $8$ not contained in the socle, and $M$ is a Sylow $2$-subgroup;  or 
\item $p=2$,  $G=\LL_2(q)$ or $\PGL_2(q)$, $M$ is the normalizer of a nonsplit torus, $q$ is prime, $q \equiv 3 \pmod 4$ and  $o(x) \ge 16$; or

\item $p=2$,  $G=E(G)\langle x\rangle$ and $E(G)  = T_1 \times \ldots \times T_t, t > 1$ is a minimal normal subgroup 
and if $T = T_1$, then $N_G(T)/C_G(T)$ has a maximal Sylow $2$-subgroup and  $N_G(T)/C_G(T)$ is isomorphic to one of
${\rm PGL}_2(7), \; {\rm M}_{10},  \; {\rm L}_2(q),\; {\rm PGL}_2(q),$
where $q >7$ is a Mersenne prime; or
\item  $p=2$, $G=E(G)\langle x\rangle$ and $E(G)$ is a central product of triple covers of $\Alt_6 = \LL_2(9)$,  $E(G)$ has a center
of order $3$ and if $T$ is a component of $G$, then $N_G(T)/C_G(T) = {\rm M}_{10}$.  
\end{enumerate}
\end{enumerate}
\end{prop}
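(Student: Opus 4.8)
The plan is to exploit the observation, already recorded in the discussion preceding the statement, that the minimality hypothesis forces $R:=\langle x\rangle$ to be a cyclic weakly subnormal $p$-subgroup. Indeed, since $\PP$ is a Baer-Suzuki property, every proper overgroup $H$ of $x$ in $G$ satisfies $\PP$, and by minimality of $|G|$ this yields $x\in O_p(H)$, i.e. $\langle x\rangle$ is subnormal in $H$; on the other hand $x\notin O_p(G)$ means $\langle x\rangle$ is not subnormal in $G$, because $x$ is a $p$-element. Thus $R$ is weakly subnormal, $\mathcal{M}(R)=\{M\}$, and $R\le O_p(M)$ by Lemma \ref{l:r-subs}(ii). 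From here the proposition is essentially a repackaging of the classification already obtained: one applies Theorem \ref{t:psolvable} when $G$ is $p$-solvable and Theorem \ref{t:cyclic} otherwise.

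If $G$ is $p$-solvable, then Theorem \ref{t:psolvable} shows that $G$ is in fact solvable with the stated $QR$-structure, which is the first alternative. If $G$ is not $p$-solvable, I would take the seven cases (i)--(vii) of Theorem \ref{t:cyclic} and sort them according to the prime $p$. Cases (ii)--(vii) occur only for $p\in\{2,3,5\}$, so for $p>5$ only case (i) can arise, giving conclusion (1); for $p=5$ one gets case (i) or (ii), giving (2); and for $p=3$ one gets case (i) or (iii), giving (3). For $p=2$ the quasisimple case (i) cannot occur at all, which is why Table \ref{tab:thm3} lists no such pair: a cyclic Sylow $2$-subgroup would force a normal $2$-complement, and a quasisimple group (being perfect with $2\mid|G|$) admits none. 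Hence for $p=2$ only cases (iv)--(vii) survive, and these transcribe directly into (4)(i)--(iv).

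The step needing genuine care is pinning down the numerical conditions in the rank-one families of (4)(ii) and reconciling them with Theorem \ref{t:cyclic}(v). Here I would use that $M=N_G(R)$ is the normaliser of a nonsplit torus, so $M$ is dihedral and $O_p(M)$ is cyclic, namely the $2$-part of the torus sitting inside $M$; the containment $\langle x\rangle\le O_p(M)$ coming from Lemma \ref{l:r-subs}(ii) then constrains the $2$-adic valuation of $q+1$, from which one reads off the congruence condition on $q$ and the lower bound on $o(x)$. For the wreath-product cases (4)(iii) and (4)(iv) I would invoke Lemma \ref{l:E(G)}, whose conclusion (ii) already supplies the component structure and the isomorphism type of $N_G(T)/C_G(T)$, so that nothing beyond matching notation with Theorem \ref{t:cyclic}(vi),(vii) remains. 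The main obstacle is therefore not conceptual but arithmetic bookkeeping: correctly tracking the order of $x$ through the cyclic group $O_2(M)$ and the dihedral structure of the torus normaliser, so that the congruence and order constraints emerge exactly as listed.
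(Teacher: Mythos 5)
Your proposal is correct and takes essentially the same route as the paper: the paper's proof likewise consists of the observation (made in the discussion preceding the proposition) that minimality forces $\langle x\rangle$ to be weakly subnormal, followed by an appeal to Theorem \ref{t:psolvable} in the $p$-solvable case and Theorem \ref{t:cyclic} otherwise, together with the single remark that a quasisimple group cannot have a cyclic Sylow $2$-subgroup -- the fact for which you supply the Burnside normal $2$-complement justification. The one point of friction you flag, reconciling the conditions in (4)(ii) ($q\equiv 3\pmod 4$, $o(x)\ge 16$) with those of Theorem \ref{t:cyclic}(v) ($q\equiv -1\pmod 8$, $|R|\ge 8$), is a discrepancy internal to the paper and not a gap in your argument.
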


\begin{proof}
This follows from Theorem \ref{t:psolvable} for $p$-solvable groups  and Theorem \ref{t:cyclic} for not $p$-solvable groups and the discussion above. Notice that a quasisimple group cannot have a cyclic Sylow $2$-subgroup. 
\end{proof}
 
 \vs
  
 There are certain conditions in which it is not clear that one can assume that $O_p(G)=1$.  If we impose an extra condition on the Sylow $p$-subgroup, then
we can say more.  Thus, 
we need the following results about groups with abelian Sylow $p$-subgroups.  

\begin{thm} \label{t:abelian}  Let $p$ be a prime.  Suppose that $G$ is a finite group with an abelian Sylow $p$-subgroup $P$
and $G = \langle P^g\text{ : }g \in G \rangle$.   Then $O_p(G)$ is central.   If $O_{p'}(G)$ is central,  then 
$G = O_p(G) \times E(G)$ with every component of $G$ having order divisible by $p$.  In particular,  $Z(E(G))=O_{p'}(G)$.
\end{thm}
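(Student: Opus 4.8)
The plan is to first dispose of the statement about $O_p(G)$, which is soft, and then to reduce the structural statement to a question about almost simple groups that is settled by the classification of (quasi)simple groups with abelian Sylow $p$-subgroups. Throughout I will use that the hypothesis $G=\langle P^g:g\in G\rangle$ just says $G=O^{p'}(G)$, i.e. $G$ has no nontrivial $p'$-quotient.

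For the first assertion: since $O_p(G)$ is a normal $p$-subgroup it lies in $P$, and as $P$ is abelian it centralises $O_p(G)$. Conjugating, every $P^g$ centralises $O_p(G)^g=O_p(G)$ (using $O_p(G)\trianglelefteq G$), so $G=\langle P^g\rangle$ centralises $O_p(G)$; thus $O_p(G)\le Z(G)$. Now assume $O_{p'}(G)\le Z(G)$. Then $F(G)=O_p(G)O_{p'}(G)\le Z(G)$, and since $Z(G)\le F(G)$ always, we get $F(G)=Z(G)$. I would immediately record that \emph{no component has order coprime to $p$}: the product $E_1$ of all components $Q$ with $p\nmid|Q|$ is a normal $p'$-subgroup, hence $E_1\le O_{p'}(G)\le Z(G)$ is abelian; but a nontrivial central product of quasisimple groups is perfect and nonabelian, so $E_1=1$.

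Next I would pass to $\overline{G}=G/Z(G)$. Here $F(\overline{G})=1$: any minimal normal subgroup of $\overline{G}$ inside $F(\overline{G})$ is elementary abelian, so its preimage $M$ satisfies $[M,M]\le Z(G)$, whence $M$ is nilpotent and normal, so $M\le F(G)=Z(G)$, forcing the image to be trivial. Therefore $F^*(\overline{G})=E(\overline{G})$ has trivial centre, so $E(\overline{G})=\soc(\overline{G})=S_1\times\cdots\times S_n$ is a direct product of nonabelian simple groups, $C_{\overline{G}}(E(\overline{G}))=1$, and $\overline{G}$ embeds in $\Aut(S_1\times\cdots\times S_n)$. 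Note $E(\overline{G})$ is the image of $E(G)$, so each $S_i\cong Q_i/Z(Q_i)$ and each $S_i$ has \emph{abelian} Sylow $p$-subgroups (a section of the abelian group $\overline P:=PZ(G)/Z(G)$). The key use of commutativity now is that $\overline{P}$ \emph{cannot permute the factors nontrivially}: if $\overline{x}\in\overline{P}$ sent $S_1$ to a distinct $S_2$, it would conjugate a nontrivial $D_1:=\overline P\cap S_1$ into $S_2$, yet $\overline{x}$ centralises $D_1\le\overline P$, giving $1\ne D_1\le S_1\cap S_2=1$; here one needs $p\mid|S_i|$, which follows from the previous paragraph together with the fact (CFSG) that a simple group with abelian Sylow $p$-subgroups has $p$-part of Schur multiplier trivial (ruling out $p\mid|Z(Q_i)|$, $p\nmid|S_i|$). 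Since $G=\langle P^g\rangle$, every conjugate of $\overline P$ also fixes all factors, so $\overline{G}$ stabilises each $S_i$ and hence $\overline{G}\le\prod_i\Aut(S_i)$.

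Projecting $\overline{G}$ to each $\Aut(S_i)$ gives an almost simple group $H_i$ with $S_i\le H_i\le\Aut(S_i)$, having abelian Sylow $p$-subgroups and satisfying $H_i=O^{p'}(H_i)$. The heart of the argument, and what I expect to be the main obstacle, is the CFSG-based fact that \emph{an almost simple group with abelian Sylow $p$-subgroups induces only $p'$-outer automorphisms}, i.e. $p\nmid[H_i:S_i]$ (for $p=2$ this is immediate from Walter's classification of simple groups with abelian Sylow $2$-subgroups, and for odd $p$ one cites the analogous classification together with the known structure of $\Out(S_i)$; the point is that adjoining an outer $p$-automorphism to an abelian Sylow $p$-subgroup destroys commutativity). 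Granting this, $H_i/S_i$ is a $p'$-quotient of $H_i=O^{p'}(H_i)$, hence trivial, so $H_i=S_i$ and therefore $\overline{G}=\prod_iS_i=E(\overline{G})$; equivalently $G=E(G)Z(G)$. Finally I would reassemble: writing $G=E(G)O_p(G)O_{p'}(G)$ and using $G=O^{p'}(G)$, the $p'$-quotient $G/E(G)O_p(G)$ is trivial, so $G=E(G)O_p(G)$; and $G/E(G)$ is a $p$-group, so the normal $p'$-subgroup $O_{p'}(G)$ lies in $E(G)$. Since each $Q_i$ is quasisimple with abelian Sylow $p$-subgroup, the multiplier fact gives that $Z(E(G))$ is a $p'$-group, whence $O_p(G)\cap E(G)\le Z(E(G))_p=1$; thus the product is direct, $G=O_p(G)\times E(G)$, and $O_{p'}(G)=O_{p'}(E(G))=Z(E(G))$, with every component of order divisible by $p$, as required.
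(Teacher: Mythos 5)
Your overall strategy is in substance the same as the paper's: both proofs come down to two CFSG-based inputs, namely (1) if $S\le H\le \Aut(S)$ with $S$ simple and $H$ has abelian Sylow $p$-subgroups, then $p\nmid [H:S]$ (for odd $p$ this follows at once from \cite[Corollary 1.2]{GGLN}, which is what the paper invokes in the form $P\le F^*(G)$; for $p=2$ it is Walter's list \cite{Walter} together with the outer-automorphism check the paper performs), and (2) a quasisimple group with abelian Sylow $p$-subgroups has center of order prime to $p$. Your packaging differs — you pass to $G/Z(G)$, use commutativity of $\overline{P}$ to kill the permutation of the simple factors, and project onto almost simple groups, whereas the paper works with $F^*(G)$ directly and uses Gasch\"utz's theorem where you use your reassembly step — but the soft parts of your argument (centrality of $O_p(G)$, $F(\overline{G})=1$, the factor-fixing argument, the final assembly) are all correct.

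There is, however, one genuine error: your ``multiplier fact'' — \emph{a simple group with abelian Sylow $p$-subgroups has trivial $p$-part of Schur multiplier} — is false. The smallest counterexample is $\Alt_5$ with $p=2$: its Sylow $2$-subgroups are Klein four groups, yet its multiplier is $\mathbb{Z}/2$ (correspondingly, $\SL_2(5)$ has quaternion Sylow $2$-subgroups). Likewise $\Alt_6$, $\Alt_7$ and $\LL_3(4)$ with $p=3$ have abelian Sylow $3$-subgroups while their multipliers have $3$-part $\mathbb{Z}/3$. The statement you actually need is (2) above, which concerns the \emph{quasisimple} group rather than its simple quotient: the covers in which $p$ divides the center have nonabelian Sylow $p$-subgroups (e.g. the Sylow $3$-subgroups of $3\cdot\Alt_6$ are nonabelian, which is why that cover causes no trouble), and this is exactly what the paper establishes by inspection of covering groups via \cite[Section 6.1]{GLS}, \cite{ShenZhou} for odd $p$ and Walter's list for $p=2$. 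Fortunately, both of your applications are compatible with the corrected statement: when ruling out $p\mid |Z(Q_i)|$ with $p\nmid |S_i|$, you only need the elementary fact that every prime dividing $|M(S_i)|$ divides $|S_i|$ (no CFSG required), and in the final step you apply the fact to the components $Q_i$ themselves, which do have abelian Sylow $p$-subgroups. So the proof is repaired locally by replacing your lemma with (2); as written, though, it rests on a false statement.
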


\begin{proof}  Since $O_p(G)=\bigcap_{g\in G}P^g$ and $G= \langle P^g\text{ : }g \in G \rangle$, it is clear that $O_p(G)\le Z(G)$. For the remainder of the proof, suppose $O_{p'}(G)\le Z(G)$. Then $F(G)= Z(G)$.  
We claim that $O_{p'}(G) \le \Phi(G)$.   If not, then by Gasch\"utz theorem,   $G/\Phi(G) = A \times L$ where $A=O_{p'}(G)/(\Phi(G) \cap O_{p'}(G)$ and
$L \cap A =1$.  Since $L$ contains a Sylow $p$-subgroup of $G/\Phi(G)$,  $G=L$ as required.

First assume that $p$ is odd.  Then by \cite[Corollary 1.2]{GGLN},  $P= Z(P) \le F^*(G)$  (in that result, it is assumed that $O_{p'}(G)=1$ but
it is clear that all that is required is that $O_{p'}(G)$ is central).  Thus $P = Z(P) \le F^*(G)$ and so
$G=F^*(G) = O_p(G)E(G)$ (since $O_{p'}(F(G)) \le \Phi(G))$.   By inspection of
the covering groups
of the simple groups with abelian Sylow p-subgroup (see \cite[Section 6.1]{GLS} and \cite{ShenZhou}),  $Z(E(G))$ is a
$p'$-group.

Now assume that $p=2$. The only simple groups $S$ with abelian Sylow $2$-subgroups are ${\rm J}_1$, ${}^2{\rm G}_2(3^a)$ with $a$ odd,  $\LL_2(q)$ with $q = 2^a \ge 4$
or $\LL_2(q)$ with $q \equiv \pm 3 \mod 8$ \cite{Walter}.   One then observes that if $X$ is any quasisimple group with $X/Z(X)\cong S$ and $|Z(X)|$ even, the Sylow $2$-subgroups of $X$ are nonabelian. It follows that $Z(E(G))$ has odd order. Thus, all that remains is to prove that $G=O_2(G)E(G)$. Since $O_{p'}(G)$ is Frattini, it suffices to prove that $G=F^*(G)$. If not, then since $G/F^*(G)$ is generated by Sylow $2$-subgroups, there exists an element $x$ of $G\setminus F^*(G)$ of $2$-power order. Since $F(G)=Z(G)$, $F^*(G)$ contains its centralizer, and $G$ has abelian Sylow $2$-subgroups, such an element normalizes each component $Q$ of $G$, and induces a non-trivial outer automorphism of $S:=Q/Z(Q)$. One can check from the list of possibilities for $S$ above that a Sylow $2$-subgroup of $\langle S, \alpha \rangle$ is nonabelian for any outer automorphism $\alpha$ of $S$ of even order. This final contradiction yields the result.
\end{proof}

If there is a weakly subnormal $p$-subgroup, we can say more.

\begin{cor} \label{c:abelian}  Let $p$ be a prime.  Suppose that $G$ is a finite group with an abelian Sylow $p$-subgroup $P$.
Suppose that $R$ is a weakly subnormal $p$-subgroup of $G$.  Then one of the following holds:
\begin{enumerate}[{\rm(i)}]
\item $O_{p'}(G)$ is non-central and $G = QR$ with $Q=O_{p'}(G)$ and $R/O_p(G)$ acting irreducibly and faithfully on $Q/\Phi(Q)$; or 
\item  $G = O_p(G) \times Q$ with $Q$ quasisimple and $Z(Q)=O_{p'}(G)$.
\end{enumerate} 
\end{cor}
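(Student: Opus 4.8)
The plan is to split according to the two alternatives of Lemma~\ref{l:r-subs}(iv), distinguished by whether $O_{p'}(G)$ is central, and to feed the central alternative into Theorem~\ref{t:abelian}. First I would record the standing observation that makes Theorem~\ref{t:abelian} applicable: since $R$ is weakly subnormal, $G$ is the normal closure of $R$ by Lemma~\ref{l:r-subs}(i); as $R \le P$, this normal closure is contained in the normal subgroup $\langle P^g : g \in G\rangle$, so in fact $G = \langle P^g : g \in G\rangle$ and hence $O_p(G) \le Z(G)$. By Lemma~\ref{l:r-subs}(iv) one of two things happens: either $O_{p'}(G) \le Z(G) \cap \Phi(G)$ is central, or $G = QR$ with $Q \unlhd G$ a $q$-group on which $R$ acts irreducibly modulo $\Phi(Q)$ while centralizing $\Phi(Q)$. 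The latter forces $O_{p'}(G) = Q$ to be non-central, since if $Q$ were central then $G = Q \times R$ and $R \unlhd G$ would be subnormal, a contradiction. Thus this dichotomy lines up precisely with conclusions (i) and (ii).

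In the non-central case I would verify conclusion (i). Here $G = QR$ with $Q = O_{p'}(G)$ a $q$-group, so $Q \cap R = 1$ and $R$ is a Sylow $p$-subgroup of $G$. The only thing to check is that the kernel $K$ of the action of $R$ on $Q/\Phi(Q)$ equals $O_p(G)$. By coprime action $K = C_R(Q/\Phi(Q)) = C_R(Q)$, which is normalized by $R$ and centralized by $Q$, hence normal in $G = QR$; being a $p$-group it lies in $O_p(G)$. Conversely $[O_p(G),Q]$ is a normal $p$-subgroup of the $q$-group $Q$, so $O_p(G)$ centralizes $Q$, and since $O_p(G) \le R$ (as $R$ is Sylow) we get $O_p(G) \le C_R(Q) = K$. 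Hence $K = O_p(G)$, so $R/O_p(G)$ acts faithfully on $Q/\Phi(Q)$, and irreducibly by Lemma~\ref{l:r-subs}(iv); this is (i).

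In the central case I would apply the second assertion of Theorem~\ref{t:abelian} to obtain $G = O_p(G) \times E(G)$ with $Z(E(G)) = O_{p'}(G)$ and every component of order divisible by $p$; note $E(G) \ne 1$, else $G$ would be a $p$-group with $R$ subnormal. It then remains only to show $E(G)$ is quasisimple. Passing to $\bar{G} = G/O_p(G)$, the image $\bar{R}$ is weakly subnormal with $O_p(\bar{G}) = 1$, and $\bar{G} \cong E(G)$ is its own layer, a central product of its components. Applying Lemma~\ref{l:E(G)} to $\bar{G}$, all components are conjugate in $\bar{G} = E(\bar{G})\bar{R}$; but $\bar{R} \le \bar{G} = E(\bar{G})$ normalizes each component, so conjugacy of the components forces a single one. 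Therefore $E(G)$ is quasisimple, and with $Q := E(G)$ we get $G = O_p(G) \times Q$ and $Z(Q) = O_{p'}(G)$, which is (ii).

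The substantive work is already carried by Theorem~\ref{t:abelian} and Lemma~\ref{l:E(G)}, so the proof is essentially an assembly; the one point that must be handled carefully is the single-component reduction in the central case, namely that after quotienting by $O_p(G)$ the subgroup $\bar{R}$ lands inside the layer and so cannot permute distinct components. This is exactly what eliminates the multi-component alternative of Lemma~\ref{l:E(G)} and pins down one quasisimple factor. By comparison, the faithfulness bookkeeping in the non-central case is routine coprime-action theory.
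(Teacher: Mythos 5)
Your proof is correct and takes essentially the same route as the paper: both rest on Theorem \ref{t:abelian} (made applicable via $G=\langle P^g : g\in G\rangle$ from Lemma \ref{l:r-subs}(i)), split on whether $O_{p'}(G)$ is central, and in the central case conclude that the conjugate components must coincide, leaving a single quasisimple factor. The only cosmetic differences are that the paper quotes Theorem \ref{t:psolvable} for case (i) where you carry out the coprime-action faithfulness check directly, and quotes Theorem \ref{t:main1} where you invoke Lemma \ref{l:E(G)} for the conjugacy of components.
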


\begin{proof}   By Lemma \ref{l:r-subs}, $G = \langle P^g\text{ : } g \in G\rangle$, so the previous theorem applies. If $O_{p'}(G)$ is not central, then Theorem \ref{t:psolvable} implies (i). 
If $O_{p'}(G)$ is central,  then the previous result implies that $G = O_p(G) \times E(G)$.   By Theorem \ref{t:main1},   $E(G)/Z(E(G))$
is a minimal normal subgroup of $G$.  Since $P$ is abelian and every component has order a multiple of $p$,  each component is
normal and so $E(G)=Q$ is quasisimple and the result follows. 
\end{proof} 

 We now obtain a reduction result for Baer-Suzuki type problem when a Sylow $p$-subgroup is abelian.

\begin{prop}\label{p:abelian}
Let the pair $(G,x)$ be a counterexample to the Baer-Suzuki type problem $\PP$ with $|G|$ minimal. Let $P$ be a Sylow $p$-subgroup of $G$ containing $x$. Assume that $P$ is abelian. Then $O_p(G)\leq Z(G)$ and one of  the following holds.

\begin{enumerate}[{\rm(i)}]
\item $O_{p'}(G)$ is non-central and $G = QR$ with $Q=O_{p'}(G)$ and $R/O_p(G)$ acting irreducibly and faithfully on $Q/\Phi(Q)$; or 
\item  $G = O_p(G) \times Q$ with $Q$ quasisimple and $Z(Q)=O_{p'}(G)$. Moreover, $p>2$, and $Q$ is described in Theorem \ref{t:cyclic}(i).
\end{enumerate} 
\end{prop}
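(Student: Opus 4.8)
The starting point is to observe that the minimality of $(G,x)$ forces $R:=\langle x\rangle$ to be weakly subnormal in $G$. Since $\PP$ is a Baer--Suzuki property, every proper overgroup $H$ of $R$ in $G$ satisfies $\PP$, so minimality gives $x\in O_p(H)$, i.e.\ $R$ is subnormal in $H$; on the other hand $x\notin O_p(G)$ means $R$ is not subnormal in $G$. Hence $R$ is a cyclic weakly subnormal $p$-subgroup of $G$, and by Lemma \ref{l:r-subs}(i) its normal closure is all of $G$, so that $G=\langle P^g: g\in G\rangle$.

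The plan is now to feed this into the abelian Sylow machinery. Applying Theorem \ref{t:abelian} to $G=\langle P^g: g\in G\rangle$ with $P$ abelian yields $O_p(G)\le Z(G)$, which is the first assertion. Corollary \ref{c:abelian}, applied to the weakly subnormal subgroup $R$, then produces exactly the dichotomy in the statement: either $O_{p'}(G)$ is non-central and $G=QR$ with $Q=O_{p'}(G)$ and $R/O_p(G)$ acting irreducibly and faithfully on $Q/\Phi(Q)$, which is case (i) verbatim; or $G=O_p(G)\times Q$ with $Q$ quasisimple and $Z(Q)=O_{p'}(G)$. It remains only to establish the two extra claims in the latter case, namely that $p>2$ and that $Q$ appears in Theorem \ref{t:cyclic}(i).

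For this I would pass to $\overline{G}:=G/O_p(G)\cong Q$. Since $Q$ is quasisimple with $Z(Q)=O_{p'}(G)$ a $p'$-group, any nontrivial normal $p$-subgroup of $Q$ would lie in $Z(Q)$, forcing $O_p(Q)=1$; thus $\overline{G}$ is quasisimple (hence not $p$-solvable) with trivial $O_p$. By the reduction modulo $O_p(G)$ recorded in the introduction, the image $\overline{R}=RO_p(G)/O_p(G)$ is a nontrivial cyclic weakly subnormal $p$-subgroup of $\overline{G}$, while the image of $P$ is an abelian Sylow $p$-subgroup of $\overline{G}$. Running through the conclusions of Theorem \ref{t:cyclic} applied to $(\overline{G},\overline{R})$: cases (ii)--(iv) give $\overline{G}\in\{{}^2{\rm B}_2(32).5,\ \LL_2(8).3,\ \mathrm{M}_{10}\}$, and cases (vi)--(vii) give $\overline{G}$ with more than one component or a nontrivial outer action on its quasisimple part, none of which is quasisimple; case (v) forces $\overline{G}=\PGL_2(q)$, again not quasisimple, or $\overline{G}=\LL_2(q)$ with $q\equiv-1\pmod 8$ and $|\overline{R}|\ge 8$, whose Sylow $2$-subgroup is dihedral of order at least $8$ and hence nonabelian, contradicting that the image of $P$ is abelian. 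Thus only case (i) of Theorem \ref{t:cyclic} survives, giving the required description of $Q$; and since that case requires a cyclic Sylow $p$-subgroup of $Q$ while a quasisimple group can never have a cyclic Sylow $2$-subgroup (by Burnside it would have a normal $2$-complement and so could not be perfect), we conclude $p>2$. The only real obstacle is the bookkeeping in this last step: one must check that, once transported to $\overline{G}$, the abelian-Sylow hypothesis genuinely eliminates every $p=2$ possibility, which it does precisely because quasisimplicity discards all cases but (i) and the $\LL_2(q)$ part of (v), and an abelian (equivalently cyclic) Sylow $2$-subgroup is incompatible with both of these.
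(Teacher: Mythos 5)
Your proof is correct and follows essentially the same route as the paper: Lemma \ref{l:r-subs}(i) plus Theorem \ref{t:abelian} give $O_p(G)\le Z(G)$, Corollary \ref{c:abelian} gives the dichotomy, and the remaining claims in case (ii) are obtained by passing to $G/O_p(G)$ and inspecting the cases of Theorem \ref{t:cyclic}, with quasisimplicity and the abelian Sylow hypothesis eliminating everything but case (i). The only cosmetic difference is in excluding the $p=2$, $\LL_2(q)$ possibility: the paper argues via the order of the Sylow $2$-subgroup, while you note that a dihedral Sylow $2$-subgroup of order at least $8$ is nonabelian; both yield the same contradiction with the hypothesis that $P$ is abelian.
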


\begin{proof}  By Lemma \ref{l:r-subs}(i), $G=\langle x^g: g\in G\rangle=\langle P^g: g\in G\rangle$  so $O_p(G)$ is central by  Theorem \ref{t:abelian}.  By Corollary \ref{c:abelian}, the proposition follows apart from the last claim in part (ii).
Since $P$ is abelian, the Sylow $p$-subgroup $P\cap Q$ of $Q$ is abelian. Since  $\langle x\rangle O_p(G)/O_p(G)$ is weakly subnormal in $G/O_p(G)\cong Q$ and $O_p(Q)=1$, so $Q$ is one of the quasisimple groups appearing in Theorem \ref{t:cyclic}.

If $p>2$, then clearly $Q$ is in Case (i) of Theorem \ref{t:cyclic}. 
Next, assume that $p=2$. By inspecting cases (iv) - (vii) in Theorem \ref{t:cyclic}, the only possibility is $Q=\LL_2(q)$, $q$ is a prime with  $q\equiv 3$ mod $8$  and the order of $xO_p(G)$ in $G/O_p(G)\cong Q$ is at least $16$.  However, this cannot occur since the Sylow $2$-subgroup of $\LL_2(q)$ has order $4$.
\end{proof}
\section{Applications to Baer-Suzuki type problems}\label{s:applications}

We apply the reduction results in the previous sections to solve several Baer-Suzuki type problems. Let $G$ be a finite group and let $p$ be a prime. Let $P$ be a Sylow $p$-subgroup of $G$ and let $x\in P.$

We first consider the following property for the pair $(G,x)$ $$(\PP_1): [x,g] \text{ is a $p$-element for every $p'$-element $g\in G$ of prime power order}.$$  
Clearly $\PP_1$ is a Baer-Suzuki property as if $H$ is any overgroup of $\langle x\rangle$ in $G$, then the pair $(H,x)$ also satisfies property $\PP_1.$
The following easy lemma will show that the pair $(G,x)$ satisfies property $\PP_1$ if and only if $(G/O_p(G),xO_p(G))$ does.

\begin{lem}\label{l:prime powers}
Let $G$ be a finite group. Let $N\unlhd G, H\leq G$ and $g\in G.$ Then 

\begin{enumerate}[{\rm(i)}]
\item If $Ng\in G/N$ is an $r$-element for some prime $r$, then $Ng=Ny$ for some $r$-element $y\in G.$ 

\item If $g$ centralizes every element of prime power order of $H$, then $g$ centralizes $H.$
\end{enumerate}
\end{lem}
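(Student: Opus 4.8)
The plan is to reduce both statements to the \emph{primary decomposition} of a single element of a finite group: for any $g \in G$ of order $n=\prod_i p_i^{a_i}$, there are pairwise commuting powers $g_{p_i}$ of $g$, each of $p_i$-power order, with $g=\prod_i g_{p_i}$. This is the Chinese Remainder Theorem applied to the cyclic group $\langle g\rangle$, and both parts of the lemma follow from it essentially immediately. The key bookkeeping point to keep in mind throughout is that the $g_{p_i}$ are \emph{powers} of $g$, so they automatically commute with one another (and with anything commuting with $g$) and lie in any subgroup containing $g$.

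For part (i), I would write $g=g_r\,g_{r'}$, where $g_r$ is the $r$-part of $g$ and $g_{r'}$ is the product of the remaining primary components, so that $g_r$ is an $r$-element, $g_{r'}$ is an $r'$-element, and $g_r g_{r'}=g_{r'} g_r$. Passing to $G/N$, the cosets $Ng_r$ and $Ng_{r'}$ commute, $Ng_r$ has $r$-power order, and $Ng_{r'}$ has order coprime to $r$; hence they are precisely the $r$-part and the $r'$-part of $Ng$. Since $Ng$ is assumed to be an $r$-element, its $r'$-part is trivial, i.e.\ $g_{r'}\in N$. As $N\unlhd G$, this yields $Ng = N g_{r'} g_r = N g_r$, so $y:=g_r$ is an $r$-element of $G$ with $Ng=Ny$, as required.

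For part (ii), I would take an arbitrary $h\in H$ and write $h=\prod_i h_{p_i}$ as above. Each $h_{p_i}$ is a power of $h$, hence lies in $H$, and has prime-power order; by hypothesis $g$ therefore centralizes each $h_{p_i}$. Since the $h_{p_i}$ commute and multiply to $h$, it follows that $g$ centralizes $h$, and as $h\in H$ was arbitrary, $g$ centralizes $H$. There is no genuine obstacle in this lemma; the only point demanding any care is the routine verification that the primary components used in each part actually lie in the relevant subgroup ($N$ in (i), $H$ in (ii)) and commute pairwise, both of which are immediate from their being powers of a single element.
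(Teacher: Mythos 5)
Your proof is correct and follows essentially the same route as the paper: the paper's choice $y=g^{vm}$ (where $1=ur^b+vm$ with $o(g)=r^bm$, $r\nmid m$) is precisely the $r$-part $g_r$ of $g$ that you use, constructed explicitly via B\'ezout rather than quoted as the standard primary decomposition, and for (ii) the paper likewise just decomposes each element of $H$ into commuting factors of prime power order.
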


\begin{proof}
 Let $Ng\in G/N$ 
 be a nontrivial $r$-element for some odd prime $r$. Let $r^a=o(Ng)$ for some integer $a\ge 1.$ Assume that $o(g)=r^bm$ for some integers $b,m\ge 1$ with $r\nmid m.$ Then $b\ge a$ and so  $g^{r^b}\in N$ since $g^{r^a}\in N$. As $\gcd(r^b,m)=1$,  there exist integers $u,v$ such that $1=ur^b+vm$. Let $y=g^{vm}$. Then $y\in G$ is an $r$-element and $Ng=Ny$. This proves (i). For (ii), observe that  every element of $H$ can be written as a product of elements of prime power order. The proof of the lemma is complete.
\end{proof}

To justify the claim above, let $g\in G$ and let  $N=O_p(G)$. Assume that $Ng$ is $p'$-element of prime power order. By Lemma \ref{l:prime powers}(i), $Ng=Ny$ for some $y\in G$ for some $p'$-element of prime power order. Thus $[Nx,Ng]=[Nx,Ny]=N[x,y]$ is a $p$-element since $[x,y]$ is a $p$-element. Consequently, the pair $(G/O_p(G),xO_p(G))$ satisfies property $\PP_1.$ The converse is clear.

  We now prove Theorem \ref{t:thmA} which  generalizes \cite[Theorem A]{GR}. 

\begin{proof}[\textbf{Proof of Theorem \ref{t:thmA}}]
Let the pair $(G,x)$ be a counterexample to Theorem \ref{t:thmA} with $|G|$ minimal. By the discussion above, we can assume that $O_p(G)=1$. Then the structure of $G$ is given in Proposition \ref{p:reduction}. Let $P$ be a Sylow $p$-subgroup of $G$ containing $x$. We consider the following cases.

Assume $G$ is $p$-solvable.  By Theorem \ref{t:psolvable}, $G=Q\langle x\rangle$, where $Q$ is a normal $q$-subgroup of $G$ for some prime $q\neq p$, $\langle x\rangle$ acts irreducibly on $Q/\Phi(Q)$ and centralizes $\Phi(Q)$. Since $x$ does not centralize $Q$, there exists $y\in Q$ such that $[x,y]\neq 1$ and clearly $[x,y]\in Q$ is a $q$-element which is a contradiction.

So we assume that $G$ is not $p$-solvable. By Lemma \ref{l:r-subs}(iv), $O_{p'}(G)$ is central in $G$. Now by Lemma \ref{l:prime powers}, $(G/O_{p'}(G),xO_{p'}(G))$ satisfies property $\PP_1$, so if $O_{p'}(G)\neq 1$, then $xO_{p'}(G)\in O_p(G/O_{p'}(G))$ by the minimality of $|G|.$ However, as $O_{p'}(G)\leq Z(G)$ and $O_p(G)=1$, it is easy to see that $O_p(G/O_{p'}(G))=1$, a contradiction. Thus we can assume that $O_{p'}(G)=1.$
We now consider the case when $P$ is abelian or nonabelian separately.

Assume $P$ is nonabelian.   Suppose that $p$ is odd.  Then   $p=5$ and $G={^2}{\rm B}_2(32).5$ or   $p=3$ and $G=\LL_2(8).3$.   One computes directly that the result holds in these cases.

Suppose that $p=2$.   If $G$ has more than one component, the result follows by Lemma \ref{l:wreath2}.   So we may assume that $G$ is almost simple.
Inspecting Proposition \ref{p:reduction}(4) leads to the cases (i) and (ii). A straightforward computation settles (i). For (ii), suppose that $G=\LL_2(q)$ or $\PGL_2(q)$, with $q$ prime and $q\equiv 3\pmod{4}$. Then $x$ normalizes a nonsplit torus, and can therefore be lifted (up to conjugation) to a matrix of the form
$$\hat{x}=\begin{pmatrix}
0 & \pm 1\\
-1 &t
\end{pmatrix}.$$
One can check that the commutator $[x,\mathrm{diag}(y,y^{-1})]$ yields a nontrivial upper triangular matrix with eigenvalues $y^2$ and $y^{-2}$ if $y\neq\{\pm 1\}$. Since the Borel subgroup of $\LL_2(q)$ has odd order, the result follows.
Finally, if  $P$ is abelian, then the result follows from Theorem \ref{t:abeliansylow} below. The proof  is now complete.
\end{proof}

Note that the previous result generalizes \cite[Theorem 2.1]{GR} where it was assumed that the Sylow $p$-subgroup is cyclic and the conclusion is that $[x,g] \ne 1$ is a $p'$-element for some $g \in G$. 

\begin{thm}\label{t:abeliansylow}
Let $p$ be a prime and let $G$ be a finite group with an abelian Sylow $p$-subgroup $P$. Let $x\in P$ be a $p$-element. Then either $x\in  O_p(G)$ or there exists a prime $r\neq p$ and an $r$-element $y$ such that $[x,y]$ is a nontrivial $p'$-element.
\end{thm}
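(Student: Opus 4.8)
The plan is to argue by minimal counterexample and reduce to the quasisimple case using the structural results already established for abelian Sylow $p$-subgroups. Let $(G,x)$ be a counterexample with $|G|$ minimal, so that $x \notin O_p(G)$ yet $[x,y]$ is never a nontrivial $p'$-element as $y$ ranges over $r$-elements, $r \ne p$. First I would observe that the hypothesis is a Baer-Suzuki property: it passes to subgroups containing $x$, and—crucially—it is preserved modulo $O_p(G)$. To see the latter, note that if $[xO_p(G), \overline{y}]$ were a nontrivial $p'$-element in $G/O_p(G)$ for some $r$-element $\overline{y}$, then by Lemma \ref{l:prime powers}(i) we may lift $\overline{y}$ to an $r$-element $y \in G$; since $O_p(G)$ is a $p$-group, $[x,y]$ would then project to a nontrivial $p'$-element, forcing $[x,y]$ itself to have nontrivial $p'$-part, contrary to hypothesis. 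Thus by minimality I may assume $O_p(G) = 1$, and in particular $\langle x \rangle$ is weakly subnormal in $G$ (it lies in a unique maximal subgroup by Wielandt's Zipper Lemma).

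With $O_p(G) = 1$ and $P$ abelian, Corollary \ref{c:abelian} applies and gives two cases. In case (i), $G = QR$ with $Q = O_{p'}(G)$ a $q$-group on which $R = \langle x \rangle$ (modulo $O_p(G) = 1$) acts faithfully and irreducibly on $Q/\Phi(Q)$ and centralizes $\Phi(Q)$. Since the action on $Q/\Phi(Q)$ is faithful, $x$ does not centralize $Q$, so there exists $y \in Q$ with $[x,y] \ne 1$; this $[x,y]$ is a nontrivial $q$-element and hence a nontrivial $p'$-element, contradicting that $(G,x)$ is a counterexample. In case (ii), $G = O_p(G) \times Q = Q$ (as $O_p(G)=1$) with $Q$ quasisimple and $Z(Q) = O_{p'}(G)$; since $P$ is abelian, $Q$ is one of the quasisimple groups with abelian Sylow $p$-subgroup, and by Proposition \ref{p:abelian} (its part (ii)) we have $p > 2$ and $Q$ is of the type recorded in Theorem \ref{t:cyclic}(i). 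So the whole problem reduces to the quasisimple groups of Table \ref{tab:thm3} with cyclic Sylow $p$-subgroup and $p$ odd.

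The main work, and the expected obstacle, is then the quasisimple case: for each $(G/Z(G), M/Z(G))$ in Table \ref{tab:thm3} with $p$ odd, I must produce an $r$-element $y$ (for some prime $r \ne p$) with $[x,y]$ a nontrivial $p'$-element, equivalently show $x$ fails the Baer-Suzuki property there. The strategy is to exploit that in these cases $x$ is a regular semisimple element (or, for $\LL_2(p)$ and $\SL_2(p)$, a nontrivial unipotent element) generating $R \le O_p(M)$, where $M = N_G(R)$ is the normalizer of a cyclic Sylow, typically a torus normalizer. For a regular semisimple $x$ lying in a maximal torus $T$, I would pick $y$ to be a suitable semisimple element (so $r \ne p$) not normalizing $T$—for instance a root element or a generator of a different torus—and show directly that $[x,y] = x^{-1}x^y$ is a nontrivial element of order coprime to $p$. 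Concretely, if $x$ were to commute modulo $p$-elements with every $p'$-element of prime power order, one can show $x$ would centralize a large part of $G$, forcing $x$ into a proper normal subgroup and contradicting that $\langle x \rangle$ is weakly subnormal (so $G$ is its normal closure by Lemma \ref{l:r-subs}(i)). The unipotent subcase $G = \LL_2(p)$ is handled by the explicit matrix computation already used in the proof of Theorem \ref{t:thmA}: taking $y = \mathrm{diag}(a,a^{-1})$ with $a \ne \pm 1$ a $p'$-element, the commutator with a unipotent $x$ is a nontrivial unipotent, hence a $p$-element—so here the needed $p'$-element $y$ must instead be chosen to interact with the root subgroup so as to produce a nontrivial diagonal (torus) element; the delicate point is verifying across all the families of Table \ref{tab:thm3} that such a $y$ of prime-power $p'$-order always exists. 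I expect the sporadic and rank-one entries to be dispatched by direct computation, while the generic families ($\LL_n(q)$, $\UU_n(q)$, the exceptional groups) require a uniform argument showing that a regular semisimple generator of a Sylow torus cannot commute modulo $p'$-elements with all semisimple $p'$-elements, which is where most of the effort will lie.
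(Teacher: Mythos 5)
Your overall skeleton (minimal counterexample, weak subnormality, Corollary \ref{c:abelian}, reduction to the quasisimple groups of Theorem \ref{t:cyclic}(i)) matches the paper's, but there are two problems, one repairable and one fatal. The repairable one: your reduction modulo $O_p(G)$ is not valid as argued. If $[\bar x,\bar y]$ is a nontrivial $p'$-element in $G/O_p(G)$ and $y$ is an $r$-element lift of $\bar y$, your argument only shows that $[x,y]$ has nontrivial $p'$-part, i.e.\ that $[x,y]$ is not a $p$-element. But the counterexample hypothesis does not forbid this: it forbids $[x,y]$ from being a nontrivial \emph{$p'$-element}, and a $p$-singular commutator with nontrivial $p'$-part is perfectly consistent with it. So you cannot conclude that the hypothesis descends to $G/O_p(G)$. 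Indeed, the paper explicitly warns that this ``$1$ or $p$-singular'' type of hypothesis is not a good inductive hypothesis (see the remarks before Proposition \ref{p:abelian} and at the start of Section \ref{s:conjectures}); this is exactly why Theorem \ref{t:conjDOp} needs a completely different argument. The paper avoids the issue: minimality over \emph{subgroups} alone gives weak subnormality of $\langle x\rangle$ (every proper overgroup inherits both the abelian Sylow condition and the hypothesis), and Corollary \ref{c:abelian} applies without assuming $O_p(G)=1$. In its case (ii) the splitting $G=O_p(G)\times Q$ makes the passage to $Q$ harmless, since for $y\in Q$ the commutator $[x,y]$ literally lies in $Q$, and the further quotient by the central $p'$-group $Z(Q)$ is safe because the kernel has order coprime to $p$.

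The fatal gap is the simple-group case, which you explicitly defer (``where most of the effort will lie'') and for which you offer only a heuristic (``pick $y$ \dots\ and show directly that $[x,y]$ is a nontrivial element of order coprime to $p$''). This is where the paper's proof has its actual content, and its key mechanism is absent from your proposal: by Theorem \ref{t:cyclic}, in these groups $x$ is semisimple and lies in no parabolic subgroup, whence $p$ divides the order of no parabolic subgroup $R$ and $C_R(x)=1$. If $G$ has twisted rank at least $2$, then $R\cap R^x\neq 1$ for a maximal parabolic $R$; any nontrivial prime-power-order $y\in R\cap R^x$ is automatically an $r$-element with $r\neq p$, it fixes the two points $R$ and $xR$ of $G/R$, so $[x,y]$ fixes the point $R$, i.e.\ $[x,y]\in R$. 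Hence $[x,y]$ is a $p'$-element, and it is nontrivial since $C_R(x)=1$. In rank $1$ the same argument runs with the Borel subgroup, using $2$-transitivity of $G$ on $G/B$ to arrange that $y$ fixes both $\alpha$ and $x\cdot\alpha$. The remaining cases (sporadic groups by computation, $\Alt_p$ by commutating with a $3$-cycle, and $\LL_2(p^a)$ in defining characteristic by an explicit commutator with an involution having order $3$) are handled individually. Without this fixed-point mechanism, or some substitute such as Gow-type results on commutators of semisimple classes, your plan for the families $\LL_n(q)$, $\UU_n(q)$ and the exceptional groups in Table \ref{tab:thm3} is a statement of intent rather than a proof.
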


\begin{proof}
Let $G$ be a minimal counterexample to the theorem. Then  $[x,y]=1$ or $[x,y]$  is $p$-singular for every $p'$-element $y\in G$ of prime power order and $x\not\in O_p(G)$. It follows that if $x\in H\lneq G,$ then $x\in O_p(H)$ and so $\langle x\rangle$ is subnormal in $H$.   Thus, $\langle x \rangle$ is weakly subnormal in $G$.   If $O_{p'}(G)$ is non-central, then by Corollary \ref{c:abelian}(i) there exists a prime $r \ne p$
so that $x$ normalizes but does not
centralize some $r$-subgroup of $G$ and the result follows.  So we may assume that $O_{p'}(G)$ is central and 
by Corollary \ref{c:abelian}(ii),  $G = O_p(G) \times Q$ with $Q$ quasisimple and $Z(Q)=O_{p'}(G)$.   Clearly, we can assume that $O_p(G)=1=O_{p'}(G)$ and so we may assume that
$G$ is simple. Moreover, Theorem \ref{t:cyclic} applies. 

We go through the possibilities. 

\smallskip
(1) $G$ is not a sporadic simple group nor an alternating group of degree $n\ge 5$.
 
 If $G$ is alternating or a sporadic simple group, then by Theorem \ref{t:cyclic} the possibilities for $(G,p)$ are given in Table \ref{tab:thm3}. We can check using GAP \cite{GAP} that if $G$ is sporadic, then there exists an $r$-element $y\in G$ such that $[x,y]$ is a nontrivial $p'$-element. If $G= \Alt_n$ then $n=p\ge 5$
 and we can choose a $3$-cycle so that $[x,y]$ has order $3$.

  \smallskip
 (2) $G$ is a finite simple group of  Lie type in characteristic $\ell\neq p.$
  
 Assume by contradiction that $\ell=p$. Then $G=\PSL_2(q)$ with $q = p^a > 5$ is the only possibility as $G$ has an abelian Sylow $p$-subgroup.  If $p \ne 2$, direct calculation shows that $[x,g]$ can have arbitrary trace for $g \in G$ an involution and in particular can be an element of order $3$.  If $p=2$, these groups do not have weakly subnormal cyclic $2$-subgroups.
 
  \smallskip
 (3) $x$ is a semisimple element and if $R$ is a parabolic subgroup of $G$, then $p$ does not divide $|R|$ and $C_R(x)=1$.     
 
 Clearly, $x$ is semisimple as $p\neq \ell.$  The claim now  follows from 
 Theorem \ref{t:cyclic}.

\smallskip
 (4)   $G$ has (twisted) Lie rank $\ge 2$.   

 Assume that $G$ has (twisted) Lie rank $1$. Let $B$ be the Borel subgroup of $G$. Then $G$ acts doubly transitively on $\Omega=G/B.$  By (2) and (3),  $p$ does not divide $|B|$ and so $x$ has no fixed points on $\Omega$. Now let $r\neq \ell$ be a prime divisor of $|B|$ such that $r$ does not divide $|C_G(x)|$, where $ C_G(x)$ is a maximal torus of $G$. Let $y\in B$ be a nontrivial semisimple $r$-element which has at least two fixed points on $\Omega$. It follows that $[x,y]$ is nontrivial.
  Suppose that $x\cdot \alpha=\beta$ for some $\alpha,\beta\in \Omega$. Note that  $\alpha\neq \beta$. Since $G$ is $2$-transitive and $y$ has two fixed points, we may assume that $y$ fixes both $\alpha$ and $\beta.$ This implies that $[x,y]$ fixes $\alpha$ and 
  thus the order of $[x,y]\neq 1$ is coprime to $p.$

\smallskip
(5)   The theorem holds.

Essentially the same argument as given in (4) applies.   Let $R$ be a maximal parabolic subgroup of $G$.   Note that since $G$ has rank at least $2$,  $R \cap R^x \ne 1$
(indeed $R \cap R^g \ne 1$
for any $g \in G$).    Thus, by (3), we can choose an $r$-element $y$ of $R\cap R^x$, with $r\neq p$. Then $y$ fixes the points $\alpha:=R$ and $\beta:=xR\in\Omega:=G/R$. As above, $[x,y]$ therefore fixes $\alpha$.   Thus, again by (3), $[x,y]$ is a nontrivial $p'$-element of $G$.     
 \end{proof}

\vs

Consider the following property for the pair $(G,x)$, where $x\in G$ is a $p$-element.

$$(\PP_2): xy \text{ is $1$ or  $p$-singular for every $p$-element $y\in G$}.$$  

 Let $y\in G$. Observe that $(xy)^x=x^{-1}(xy)x=yx$. Hence $xy$ and $yx$ have the same order. 
Therefore, if $xy$ is either 1 or $p$-singular for every $p$-element $y\in G$, then $yx$ is either $1$ or $p$-singular for every $p$-element $y\in G.$ Now if the pair $(G,x)$ satisfies property $\PP_2$, then for every $g\in G$, we have $[x,g]=x^{-1}g^{-1}xg=(x^{g})^{-1}x$ is either $1$ or $p$-singular since $(x^{g})^{-1}$ is a $p$-element. It is clear that $\PP_2$ is a Baer-Suzuki property.

We claim that the pair $(G,x)$ satisfies $\PP_2$ if and only if $(G/K,Kx)$ satisfies $\PP_2$, where $K=O_p(G)$.

Assume first that $(G,x)$  satisfies $\PP_2$. 
We may assume that $x\not\in K$. Note that $ O_p(G/K)$ is trivial. Let $Ky\in G/K$ be a $p$-element. Then $y\in G$ is a $p$-element and thus $xy$ is either $1$ or $p$-singular. Assume that $KxKy=Kxy$ is a nontrivial $p'$-element. 
Write $xy=az=za$, where $a$ is a $p$-element and $z$ is a nontrivial $p'$-element. Then  $Kxy=Kaz=KaKz$ is a $p'$-element. Since $Ka$ and $Kz$ commute, we deduce that $Ka=K$ and hence $a\in K.$ It follows that $x(ya^{-1})=z$, where $ya^{-1}\in \langle y\rangle K$ is a $p$-element. However, this violates the $\PP_2$ property. Thus $KxKy\in G/K$ is either $1$ or $p$-singular for every $p$-element $Ky\in G/K$.

Conversely, assume that $(G/K,Kx)$ satisfies $\PP_2$. Let $y\in G$ be a $p$-element. Assume that $xy\neq 1$ is a $p'$-element. Then $Kxy=KxKy$ is a $p'$-element in $G/K$. By the assumption, $Kxy=K$ or $xy\in K $ is a $p$-element. Since $xy$ is a $p'$-element, we must have $xy=1$, a contradiction.

\begin{proof}[\textbf{Proof of Theorem \ref{t:thmC}}]
Let the pair $(G,x)$ be a counterexample to the theorem with $|G|$ minimal. Then $\langle x\rangle$ is weakly subnormal in $G$ and by the discussion above, we may assume $O_p(G)=1$, so Proposition \ref{p:reduction} applies.

 Next, we claim that $N= O_{p'}(G)=1.$ Suppose by contradiction that $N>1$. Clearly $Nx\in G/N$ is a $p$-element. Now let $Ng\in G/N$ be a $p$-element of $G/N$. By Lemma \ref{l:prime powers}, we may assume that $g\in G$ is a $p$-element and thus $xg$ is either $1$ or $p$-singular. Since $N$ is a $p'$-group, we see that $Nx\cdot Ng=Nxg$ is either $1$ or $p$-singular. Since $|G/N|<|G|$, by the minimality of $|G|$, we have $Nx\in O_p(G/N)$. Let $K$ be a normal subgroup of $G$ containing $N$ such that $K/N= O_p(G/N)$. Then $x\in K\unlhd G$ which forces $K=G$ as $G=\langle x^G\rangle$ by Lemma \ref{l:r-subs}. For any $n\in N,$ we have $x(n^{-1}x^{-1}n)=[x^{-1},n]\in N$ is a $p'$-element. Since $(n^{-1}x^{-1}n)\in G$ is a $p$-element, we must have that $[x^{-1},n]=1$ and so $[x,N]=1$. As $G=\langle x^G\rangle,$ $G$ centralizes $N$ and thus $x\in O_p(G)$, a contradiction. Therefore, we can assume that $ O_{p'}(G)=1$. It follows that $G$ is not $p$-solvable and thus one of the cases (1)-(4) in Proposition \ref{p:reduction} holds.

Let $N$ be a minimal normal subgroup of $G$. Since $ O_p(G)= O_{p'}(G)=1$, $N=T_1\times T_2\times\cdots\times T_t$, where each $T_i$ is conjugate in $G$ to $T=T_1$,  a non-abelian simple group with $p$ dividing $|T|$, and $k\ge 1$ is an integer. Assume that $\langle x\rangle N\neq G.$ Then $N\leq M$ and thus $[x,N]=1$ as $x\in O_p(M)$. It follows that $x\in C_G(N)\unlhd G$ and since $G=\langle x^G\rangle,$ $G= C_G(N)$ which forces $N\leq  Z(G)$, a contradiction. Thus $G=\langle x\rangle N.$ Note that $\langle x\rangle$ acts transitively on the simple factors $\{T_i\}_{i=1}^t$ by conjugation. 

Assume that $t\ge 2.$ Let $r\neq p$ be a prime that divides $|T_1|$ and let $R\in\Syl_r(T_1)$. Assume that $T_1^x=T_j$ for some $j\neq 1.$ Assume that  $x$ does not centralizes $R$. Then there exists $y\in R$ with $y\neq y^x$. Then $y^x\in T_j$ commutes with $y$. Hence $$y^{-1}y^x=y^{-1}x^{-1}yx=(x^y)^{-1}x=x(x^{yx})^{-1}$$ is an $r$-element. Since $(x^{yx})^{-1}$ is a $p$-element, $y^{-1}y^x=1$ or $y^x=y$, a contradiction.

So $k=1$ and $G$ is almost simple with socle $T$.   If the Sylow $p$-subgroup of $G$ is abelian, then Theorem \ref{t:abeliansylow} applies (note $[x,y]$ is a product of two $p$-elements).  
This leaves only one case each for $p=3$ and $5$ which are easy to check.  The cases with $p=2$ (i.~e. case (v) in Theorem \ref{t:cyclic}) have socle $\LL_2(q)$, with $q$ prime, $q\equiv 3\pmod{4}$ and $x$ normalizing a nonsplit torus. As in the proof of Theorem \ref{t:thmA}, $x$ can be lifted (up to conjugation) to a matrix $\hat{x}$ such that $[\hat{x},\mathrm{diag}(y,y^{-1})]$ yields a nontrivial upper triangular matrix with eigenvalues $y^2$ and $y^{-2}$ if $y\neq\{\pm 1\}$. In particular, $[\hat{x},\mathrm{diag}(y,y^{-1})]$ can have order $r$ for any odd prime $r$ dividing $(q-1)/2$. Since $\hat{x}=\hat{x}[\hat{x},\mathrm{diag}(y,y^{-1})]$, this gives us what we need.   The only other case is $G=\textrm{M}_{10}$ and $o(x)=8$. There, one can check directly using the character table that there exists $y\in G\setminus x^G$ of order $8$ in $\textrm{M}_{10}$ with $xy$ of odd order.
\end{proof}

Let $x\in G$ be a $p$-element. Consider the following property for the pair $(G,x)$:
$$(\PP_3): r\mid o(xy) \text{ for all nontrivial $r$-elements $y\in G$ and for all primes $r\neq p$}.$$  

It is easy to see that $\PP_3$ is a Baer-Suzuki property. Next, let $K=O_p(G)$. We show that $(G,x)$ satisfies $\PP_3$ if and only if $(G/K,Kx)$ satisfies $\PP_3.$

Assume that $(G,x)$ satisfies $\PP_3$. Let $xK\in G/K$ be an $r$-element for some prime $r\neq p.$ By Lemma \ref{l:prime powers}, we can assume that $y$ is an $r$-element. Then $xy$ is $r$-singular and since $r\nmid |K|$, we see that $Kxy=KxKy$ is also an $r$-singular element in $G/K.$  

Conversely, assume that $(G/K,Kx)$ satisfies $\PP_3.$ Let $y\in G$ be an $r$-element for some prime $r\neq p.$ Now $Kx$ is an $r$-element in $G/K$ and so $KxKy=Kxy$ is $r$-singular in $G/K$ which implies that $r\mid o(xy)$.
 
 \begin{proof}[\textbf{Proof of Theorem \ref{t:thmE}}] 
 Let $G$ be a counterexample to the theorem with $|G|$ minimal. Then $r\mid o(xy)$ for all nontrivial $r$-elements $y\in G$, where $r\neq p$ is a prime, but $x\not\in O_p(G)$. 
 It follows that $\langle x \rangle$ is weakly subnormal in $G$.  Moreover, we can assume that $O_p(G)=1$ and so Proposition \ref{p:reduction} applies.  If $O_{p'}(G)$ is not central,  then the coset $xO_{p'}(G)$ contains
 different conjugates of $x$ and so the result holds.  So $O_{p'}(G)$ is central, whence $F^*(G)=E(G)$.  
 
 Suppose first that $E(G)$ is not quasisimple, and let $r\neq p$ be a prime dividing the order of a component. Then by Lemma \ref{l:wreath2}, there exists $z\in E(G)$ such that $y:=[x,z]$ is a non-trivial $r$-element. Then $xy=x^z$ is not divisible by $r$.

Thus, $G/Z(E(G))$ is almost simple, and the possibilities are given in Proposition \ref{p:reduction}. 
Clearly, it will suffice to assume that $G$ is almost simple, and to find a prime $r\neq p$ such that $r$ does not divide the order of the Schur multiplier of $S:=E(G)/Z(E(G))$, and such that there exists an $r$-element $y$ of $G$ with $o(xy)$ not divisible by $r$.  
 
 If $S$ is sporadic, a straightforward computation using the character table proves the claim above.   If $S$ is alternating, then $S=\Alt_p$ with $p\ge 13$ and  $x$ is a $p$-cycle and so $[x,z]=y$ can be an element of order $3$. Hence,  $xy=x^z$ has order $p$ and is prime to $3$.  
 
Suppose next that $S$ is a simple group of Lie type and that $x$ is a regular semisimple element. Let $r\neq p$ be a prime divisor of $|S|$ not dividing the order of the Schur multiplier of $S$, and not equal to the defining characteristic of $S$. Let $z$ be an element of $S$ of order $r$.  By  Gow's  theorem \cite[Theorem 2]{Gow}, $z^g=xa$ for some $a\in x^G$, $g\in G$. Then $o(xz^{-g})=o(z^{-g}x)=o(a)=o(x)$.

If $G=\LL_2(q)$, $p=2$ and $q=p=2^k+1$, then the argument above also gives the result. Indeed, in those cases, $q+1$ has at least two distinct odd prime divisors $r_1,r_2$. Then \cite[Theorem 2]{Gow} yields $x=x_1x_2$ where $x_i$ is a regular semisimple $r_i$-elements of $G$. Then $o(xx_2^{-1})=o(x_1)$. If $G=\PGL_2(q)$, $p=2$ and $q\equiv 3 \pmod{4}$, then as in the proof of Theorems \ref{t:thmA} and \ref{t:thmC}, there exists $z$ in the split torus of odd prime order such that $y=[x,z]$ has order $o(z)$. Then $xy=x^z$. 

If $p$ divides $q$ we are in the case $G=\LL_2(p), p > 5$. Then
$$x:=\begin{pmatrix}
 1 & 1\\
 0 & 1
\end{pmatrix}, 
y:=\begin{pmatrix}
 0 & -1\\
 1 & 1    
\end{pmatrix},
z:=\begin{pmatrix}
  1 & 0\\
 1 & 1   
\end{pmatrix}
$$
yields $o(x)=o(z)=p$ and $o(y)=3$.

The cases with $p=5$ and $G={^2}{\rm B}_2(32).5$ and 
$p=3$ with $G=\LL_2(8).3$ with $x$ an outer element of order $9$ are straightforward to check.   Similarly, 
the results for 
$x$ an outer element of order $8$ in $\textrm{M}_{10}$ can be ruled out by using \cite{GAP}.  This completes the proof.
 \end{proof}

\section{Glauberman's $Z_p^*$-theorem }\label{s:Glauberman}

In the next theorem, we collect and prove several known equivalent statements of Glauberman's $ Z_p^*$-theorem including the proof of Theorem \ref{t:thmB}. (See \cite{Ar, Glauberman, GGLN,GR,Tong}.) Recall that  for a finite group $G$ and a prime $p,$ $ Z^*_{p}(G)/ O_{p'}(G)= Z(G/ O_{p'}(G))$. Moreover, for a $p$-element $x\in G$  and a subgroup $P$ of $G$ containing $x$, we say that $x$ is   {isolated} (or strongly closed) in  $P$ with respect to $G$ if $x^G\cap P=\{x\}$, that is, $x$ is not conjugate in $G$ to any element in $P-\{x\}$.

\begin{thm}\label{th:Glauberman} Let $G$ be a group and let $p$ be a prime. Let $x\in G$ be a $p$-element and let $P$ be a Sylow $p$-subgroup of $G$ containing $x$. Then the following are equivalent.

\begin{enumerate}[\rm (i)]

\item $x$ is isolated in $P$ with respect to $G$, i.e., $x^G\cap P=\{x\}$. 
\item $x^G\cap  C_G(x)=\{x\}$, that is, $x$ does not commute with any $G$-conjugate of $x$ different from $x$.

\item $ C_G(x)$ controls $p$-fusion in $G$, that is, $ C_G(x)$ contains a Sylow $p$-subgroup $P_1$ of $G$ and if $y,y^g\in P_1$ for some $g\in G$, then $y^g=y^h$ for some $h\in  C_G(x)$.

\item $[x,g]$ is a $p'$-element for all $g\in G.$
\item $[x,g]$ is a $p'$-element for all elements $g\in G$ of prime power order.

\item $x\in Z_p^*(G)$,  that is, $x$ is central modulo $ O_{p'}(G)$.
\item $G= C_G(x) O_{p'}(G)$.
\end{enumerate}
\end{thm}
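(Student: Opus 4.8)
The plan is to prove a cycle of implications, splitting the seven conditions into a \emph{local/fusion} block $\{$(i),(ii),(iii)$\}$ and a \emph{global/commutator} block $\{$(iv),(v),(vi),(vii)$\}$. I would establish the equivalences inside each block by elementary means and then join the blocks by two bridges: one is the classical $Z_p^*$-theorem, and the other is the genuinely new content (Theorem \ref{t:thmB}).

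Inside the global block, the equivalence (vi)$\Leftrightarrow$(vii) is the standard coprime-centralizer lemma: since $x$ is a $p$-element and $O_{p'}(G)$ is a normal $p'$-group, one has $C_{G/O_{p'}(G)}(xO_{p'}(G))=C_G(x)O_{p'}(G)/O_{p'}(G)$ (see \cite{Isaacs}), so $\bar x:=xO_{p'}(G)$ is central in $\bar G:=G/O_{p'}(G)$ if and only if $G=C_G(x)O_{p'}(G)$. The implications (vi)$\Rightarrow$(iv)$\Rightarrow$(v) are immediate, since (vi) asserts exactly that $[x,g]\in O_{p'}(G)$ for all $g$, and (v) is the restriction of (iv) to elements of prime power order. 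For the local block I would record (i)$\Leftrightarrow$(ii)$\Leftrightarrow$(iii) as the usual Sylow/fusion equivalences: if $x$ is isolated in $P$, then $x^u\in x^G\cap P=\{x\}$ for every $u\in P$, whence $P\leq C_G(x)$ and $x\in Z(P)$, which makes control of $p$-fusion (iii) and the ``commuting conjugate'' statement (ii) routine (a commuting conjugate distinct from $x$, or a conjugate lying in $P$, can be transported into $P$ by Sylow's theorem to contradict isolation). These I would treat as standard, citing \cite{Glauberman, Ar, Tong}.

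The two bridges carry the real weight. The bridge (i)$\Rightarrow$(vi) \emph{is} Glauberman's $Z_p^*$-theorem: for $p=2$ it is Glauberman's original result \cite{Glauberman}, and for odd $p$ it is the version depending on the classification of finite simple groups due to Guralnick--Robinson \cite{GR} (extended from elements of order $p$ to arbitrary $p$-elements, e.g.\ via \cite{GGLN}); here I would simply quote the statement. The remaining bridge (v)$\Rightarrow$(i) is Theorem \ref{t:thmB}, and it is the main obstacle, precisely because the hypothesis only controls commutators $[x,g]$ with $g$ of \emph{prime power} order.

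The key device for removing that restriction is a parity-of-order dichotomy: if $y,u$ both lie in the single Sylow $p$-subgroup $P$, then $[y,u]\in P$ is a $p$-element, whereas (v) will force it to be a $p'$-element, hence trivial. First I would apply this with $u\in P$ and $y=x$ to get $x\in Z(P)$. Next, for an arbitrary conjugate $y=x^g\in P$ and any $u\in P$, the identity $[y,u]=[x,u^{g^{-1}}]^g$ together with the fact that $u^{g^{-1}}$ is a $p$-element shows that $[y,u]$ is simultaneously a $p$-element and (a conjugate of) a $p'$-element, hence $[y,u]=1$; thus $y\in Z(P)$ as well. So every $G$-conjugate of $x$ lying in $P$ is central in $P$, and Burnside's fusion theorem gives $y=x^n$ for some $n\in N_G(P)$. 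Decomposing $n$ into its commuting prime-power parts, the $p$-part lies in the unique normal Sylow $p$-subgroup $P\trianglelefteq N_G(P)$ and so centralizes $x$; hence $y\neq x$ would force some prime-power part $c$ (of order prime to $p$) to satisfy $x^c\in P$ with $x^c\neq x$, making $[x,c]$ a nontrivial $p$-element with $c$ of prime power order, against (v). Therefore $y=x$, establishing isolation (i) and closing the cycle. I expect this last step---trading the weak prime-power hypothesis for genuine isolation via the order dichotomy, Burnside fusion, and the prime decomposition of a normalizer element---to be the delicate part of the argument.
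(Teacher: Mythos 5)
Your proposal is correct, and its implication graph is the same as the paper's: the local equivalences (i)$\Leftrightarrow$(ii)$\Leftrightarrow$(iii), the elementary global equivalences (vi)$\Leftrightarrow$(vii)$\Rightarrow$(iv)$\Rightarrow$(v), and the two bridges (v)$\Rightarrow$(i) and (i)$\Rightarrow$(vi). The genuine difference is in how you execute the bridge (v)$\Rightarrow$(i), which is the content of Theorem \ref{t:thmB}. You first show that every $G$-conjugate of $x$ lying in $P$ is central in $P$ (via the order dichotomy applied to $[y,u]=[x,u^{g^{-1}}]^g$), then invoke Burnside's fusion theorem to write such a conjugate as $x^n$ with $n\in N_G(P)$, and finally kill $n$ by splitting it into commuting prime-power parts: the $p$-part lands in $P$ and centralizes $x$, and any $p'$-part moving $x$ would produce a nontrivial $p$-element commutator $[x,c]$ with $c$ of prime power order, contradicting (v). This is sound. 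The paper instead proves the stronger intermediate statement that $N_G(Y)\le C_G(x)$ for \emph{every} $p$-subgroup $Y$ containing $x$: for $g\in N_G(Y)$ of prime power order, $[x,g]\in Y$ is a $p$-element and by (v) a $p'$-element, hence trivial, and then Lemma \ref{l:prime powers}(ii) upgrades this to all of $N_G(Y)$. Isolation then follows from a single Sylow conjugation inside $C_G(x)$, with no need for Burnside's theorem or the prime decomposition. Both arguments hinge on the same dichotomy; the paper's is a little more economical, while yours stays closer to standard fusion reasoning.

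One caveat concerns your other bridge, (i)$\Rightarrow$(vi), which you propose to handle by quotation. The results you cite are stated for elements of \emph{order} $p$ (\cite[Theorem 4.1]{GR}, \cite{Ar}) or for involutions (\cite{Glauberman}); the version for arbitrary $p$-elements is not a bare citation, and the paper supplies exactly the missing reduction: powers of isolated elements are isolated (\cite[Lemma 3.2]{GGLN} or \cite[Lemma 2.5]{Tong}), apply the order-$p$ theorem to $y=x^{p^{a-1}}$, and then induct on $|G|$, passing first to $G/O_{p'}(G)$ and then to $G/\langle y\rangle$, where Hall's theorem is used to show that $O_{p'}$ of the latter quotient is trivial, so that $[x,g]$ is forced into $Z(G)$ and isolation finishes the argument. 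Your parenthetical ``via \cite{GGLN}'' points at the right lemma, but to make the proof complete you should either carry out this induction or cite a reference that genuinely states the $p$-element version.
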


\begin{proof}  Let $x\in G$ be a $p$-element. Let $P\in\Syl_p(G)$ with $x\in P$, $C= C_G(x)$ and $X=\langle x\rangle.$

$(i)\Leftrightarrow (ii)$. Assume that $x^G\cap P=\{x\}$. It follows that $x^P\subseteq x^G\cap P=\{x\}$ and hence $x\in Z(P)$. Thus $P\subseteq C$ and so $P$ is a Sylow $p$-subgroup of $C$. Clearly $x\in x^G\cap C$. Now let $g\in G$ be such that $x^g\in C$. It follows that $\langle x,x^g\rangle$ is a $p$-subgroup of $C$. By Sylow's theorem, $\langle x,x^g\rangle\leq P^h$ for some $h\in C.$ We now have that  $x^{gh^{-1}}\in P$ and $x^h=x$. So $x^{gh^{-1}}\in x^G\cap P=\{x\}$ which forces $x^g=x^h=x$ proving (ii).

Assume that $x^G\cap C=\{x\}$. Let $U$ be a Sylow $p$-subgroup of $C$ containing $x$. We claim that $U$ is also a Sylow $p$-subgroup of $G$. Assume by contradiction that $U$ is not a Sylow $p$-subgroup of $G$ and suppose that $U\leq P_1\in\Syl_p(G)$. By Sylow's theorem, $P_1=P^t$ for some $t\in G.$ Since $|U|<|P_1|$, $U_1:= N_{P_1}(U)>U$. Let $g\in U_1$. Then $U^g=U$ which implies that $x^g\in U\cap x^G\subseteq x^G\cap C=\{x\}$. Hence $x^g=x$ and so $g\in C$. Therefore $U_1\subseteq C$ which is impossible as $U$ is a Sylow $p$-subgroup of $C$ and $U_1$ is a $p$-group properly containing $U$.

$(i)\Leftrightarrow (iii)$. This is \cite[Lemma 2.3]{Tong}. 

Assume that $x^G\cap P=\{x\}$. We claim that $C$ controls $p$-fusion in $G$. Since $x^P\subseteq x^G\cap P,$ $x\in Z(P)$ and so $P\leq C$.  Now assume $y,y^g\in P$ for some $g\in G.$ Since $y,y^g\in P\subseteq  C_G(x),$ $\{x,x^{g^{-1}}\}\subseteq  C_G(y)$. Let $U$ be a Sylow $p$-subgroup of $ C_G(y)$ containing $x$. By Sylow's theorem, $U\leq P^t$ for some $t\in G.$ It follows that $x^{t^{-1}}\in P\cap x^G=\{x\}$; hence $x^{t^{-1}}=x$ and so $t\in  C_G(x)$. Now $x^{g^{-1}}\in U^c$ for some $c\in C_G(y)$ as $U$ is a Sylow $p$-subgroup of $C_G(y)$. Now we have $x^{g^{-1}c^{-1}t^{-1}}\in P\cap x^G=\{x\}$ which implies that $g^{-1}c^{-1}\in C.$ Therefore $h=cg\in C$ and so $y^g=y^{cg}=y^h$ where $h\in C$. Thus $C$ controls $p$-fusion in $G$ as wanted.

Conversely, assume that $C$ controls $p$-fusion in $G$ and let $P_1$ be a Sylow $p$-subgroup of $C$. By definition, $P_1\in\Syl_p(G)$ and thus $P_1^t=P$ for some $t\in G.$ Since $x\in P=P_1^t,$ $x^{t^{-1}}\in P_1\leq C$. As $C$ controls $G$-fusion in $P_1$, it follows that $x^{t^{-1}}=x^h$ for some $h\in C$. Hence $x^{t^{-1}}=x$ and so $t\in C.$ Thus $P=P_1^t\subseteq C$. Now if $x^g\in P$ for some $g\in G$, then $x^g=x^u$ for some $u\in C$ and so $x^g=x$. We conclude that $x^G\cap P=\{x\}.$

$(vi)\Leftrightarrow (vii)$. Clearly $(vii)$ implies $(vi)$. We will show the other direction. Assume that $\overline{x}\in Z(\overline{G})$, where $\overline{G}:=G/ O_{p'}(G)$. Let $X=\langle x\rangle$. Then $X$ is a $p$-subgroup of $G$ and $\overline{X}$ is a central subgroup of $\overline{G}$. By \cite[Lemma 7.7]{Isaacs}, we have $ C_{\overline{G}}(\overline{X})=\overline{ C_G(X)}=\overline{C}$, hence $\overline{G}=\overline{C}$ or $G=C O_{p'}(G)$. This proves the remaining implication.

$(vii)\Rightarrow (iv)$. Assume that $G=C O_{p'}(G)$. Then $G= O_{p'}(G)C$. Let $g\in G.$ Then $g=tc$ for some $c\in C$ and $t\in O_{p'}(G)$. Now $[x,g]=[x,tc]=[x,c][x,t]^c=[x,t]^c.$ As $t\in O_{p'}(G)\unlhd G$, we see that $[x,t]=(t^x)^{-1}t\in  O_{p'}(G)$ and hence $[x,g]=[x,t]^c\in  O_{p'}(G)$ is a $p'$-element. This proves $(v)$.

$(iv)\Rightarrow (v)$. This is obvious.

$(v)\Rightarrow (i)$.
(The next two claims prove Theorem \ref{t:thmB}) 
Let $Y$ be a $p$-subgroup of $G$ containing $x$. Then $x\in Y\unlhd  N_G(Y)$ and thus  $[x,g]\in Y$ is a $p'$-element for every prime power order element $g\in N_G(Y)$, it follows that $[x,g]=1$  and so $x$ centralizes every prime power order element of $ N_G(Y)$. Hence $x$ centralizes $ N_G(Y)$ so $ N_G(Y)\leq  C_G(x)$. In particular, $ N_G(X)= C_G(X)$ and $ N_G(P)\leq  C_G(X)$, where $X=\langle x\rangle\leq P\in\Syl_p(G) $.

Assume that $x^g\in P$ for some $g\in G.$ Then $x\in P^{g^{-1}}$ and so $P^{g^{-1}}\leq  C_G(x)$ by the previous claim. Since $P,P^{g^{-1}}\leq  C_G(x)$, $P^{g^{-1}}=P^u$ for some $u\in  C_G(x)$, hence $P^{ug}=P$ which implies that $ug\in N_G(P)\leq  C_G(x)$. It follows that $ug\in C_G(x)$, therefore $g\in C_G(x)$. We have shown that if $x^g\in P$, then $x^g=x$ for any $g\in G$. Therefore, $x^G\cap P=\{x\}$ and so $x$ is isolated in $P$ with respect to $G$.

$(i)\Rightarrow (vi)$. Assume $x^G\cap P=\{x\}$.  Since $(i), (ii)$ and $(iii)$ are equivalent, 
we also have that  $x^G\cap C=\{x\}$. In particular, $x\in Z(P)$ and $P\in\Syl_p(C)$. Assume $o(x)=p^a$ for some integer $a\ge0.$ 

By \cite[Lemma 3.2]{GGLN} or \cite[Lemma 2.5]{Tong}, if $y\in \langle x\rangle,$ then $y^G\cap P=\{y\}$.  If $a=0$, then there is nothing to prove. Assume $a\ge 1.$ Let $y=x^{p^{a-1}}$. Then $o(y)=p$ and $y^G\cap P=\{y\}$ or equivalently $y$ does not commute with any conjugate $y^g\neq y.$ By \cite[Theorem 4.1]{GR}, $y$ is central modulo $N:= O_{p'}(G)$.

Let $\overline{G}=G/N$. Then $\overline{x}$ is isolated in $\overline{P}$ with respect to $\overline{G}$. As $ O_{p'}(\overline{G})=1$, if $N$ is nontrivial, then $\overline{x}$ is central in $\overline{G}$ by induction, which proves $(vi)$. Thus we can assume that $N=1.$ It follows that $Z=\langle y\rangle \subseteq  Z(G)$. Again, $xZ$ is isolated in $P/Z$ with respect to $G/Z$. By induction, $xZ$ is central modulo $K/Z= O_{p'}(G/Z)$. Since $Z$ is a central $p$-subgroup of $K$ with $K/Z$ a $p'$-group, $K$ is $p$-solvable with a central Sylow $p$-subgroup $Z$. By Hall's Theorem \cite[Theorem 3.20]{Isaacs}, $K$ has a Hall $p'$-subgroup $H$ and $K=HZ$. Since $[Z,K]=1$, $H\unlhd K$ and so $H\unlhd G$. Since $ O_{p'}(G)=1$, we deduce that $H=1$ and hence $ O_{p'}(G/Z)=1$. Thus $xZ\in Z(G/Z)$ and hence $[x,g]\in Z\subseteq  Z(G)$ for all $g\in G$. It follows that $x^g$ commutes with $x$ for all $g\in G$ which forces $x^g=x$ for all $g\in G$ (since $x$ is $P$-isolated). Hence $x\in Z(G)$ as wanted.
\end{proof}

\section{Orders of commutators and the open conjectures} \label{s:conjectures}

We prove Conjecture \ref{conjD} under the assumption that $  O_p(G)$ is abelian.    The structure of the argument
is different in this case because this is not a good inductive hypothesis.  So Wielandt's  Zipper lemma is not as useful
in this context.   However, we can make a number of reductions of a similar nature.

We first need a classification of subgroups of prime order satisfying a variation of the weakly subnormal property.

\begin{lem}\label{l:almostwsn}  Let $p$ be a prime, $G$ a finite group with $O_p(G)=1$ and $x \in G$ of order $p$.  Assume
that $G = \langle x^g | g \in G \rangle$ and that if $x \in H$ a proper subgroup of $G$, then $O_p(H) \ne 1$.
Then one of the following holds:
\begin{enumerate} 
\item[\rm(i)]  $\langle x \rangle$ is weakly subnormal in $G$ and a Sylow $p$-subgroup of $G$ is cyclic; or 
\item[\rm(ii)]  $F^*(G) \cong \LL_p^{\epsilon}(2^a)$ with $2^a - \epsilon = p$ a Fermat or Mersenne prime or
$F^*(G) \cong \rm{U}_3(8)$ with $p=3$.  
\end{enumerate}
\end{lem}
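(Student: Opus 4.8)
The plan is to compare the standing local hypothesis (that every proper overgroup of $x$ has nontrivial $O_p$) with genuine weak subnormality, and to split into two regimes. Write $R=\langle x\rangle$, so $|R|=p$. Since $O_p(G)=1$ and $x\neq 1$, the subgroup $R$ is not subnormal in $G$; and because $R$ is a $p$-group, $R$ is subnormal in a subgroup $H\geq R$ if and only if $R\leq O_p(H)$. Thus exactly one of the following occurs: \textbf{(A)} $x\in O_p(H)$ for every proper overgroup $H$ of $x$, in which case $R$ is weakly subnormal in $G$; or \textbf{(B)} there is a proper overgroup $H_0$ of $x$ with $x\notin O_p(H_0)$, although $O_p(H_0)\neq 1$ by hypothesis. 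I would show that regime (A) forces conclusion (i) and regime (B) forces conclusion (ii); note that the groups in (ii) have noncyclic Sylow $p$-subgroups, so they can only arise in (B).

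In regime (A), $R$ is a cyclic weakly subnormal $p$-subgroup of order exactly $p$, so I would invoke the classification already established. If $G$ is $p$-solvable, Theorem \ref{t:psolvable} shows $R$ is a full Sylow $p$-subgroup, which is therefore cyclic, giving (i). If $G$ is not $p$-solvable, I would apply Theorem \ref{t:cyclic} and use $|R|=p$ to eliminate every case but the quasisimple one: cases (ii)--(v) of that theorem all require $|R|>p$, while the multi-component cases (vi)--(vii) require $o(x)\geq 4$ (a cyclic $R$ with $o(x)=2$ that transitively permutes $t>1$ components must swap two of them and hence induce only inner automorphisms on each $T_i$, contradicting the prescribed structure of $N_G(T)/C_G(T)$; Lemma \ref{l:wreath} handles the residual subnormality check). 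Hence only Theorem \ref{t:cyclic}(i) survives, giving $G$ quasisimple with cyclic Sylow $p$-subgroup, which is conclusion (i). In particular, when $p=2$ and $o(x)=2$, regime (A) forces $G$ to be $2$-solvable.

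The substance of the lemma lies in regime (B), where $R$ fails to be weakly subnormal but the local hypothesis still holds. Here I would first run a component reduction along the lines of Lemmas \ref{l:E(G)}, \ref{l:wreath} and \ref{l:wreath2}: using $G=\langle x^G\rangle$ together with coprime action to control $O_{p'}(G)$ and the permutation action of $x$ on the components, I would reduce to the case that $F^*(G)$ is quasisimple, so that $G$ is almost simple modulo its centre. The remaining task is then to decide, for each simple group $S=F^*(G)/Z(F^*(G))$, whether there exists an element $x$ of order $p$ with $G=\langle x^G\rangle$, lying in no proper overgroup with trivial $O_p$, and yet not weakly subnormal; and to confirm that the only outcomes are $\LL_p^{\epsilon}(2^a)$ with $2^a-\epsilon=p$ and $\UU_3(8)$ with $p=3$.

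The main obstacle is precisely this final, CFSG-dependent step. The key point to exploit is that the subgroups $H_0$ witnessing the failure of subnormality are $p$-local subgroups — normalizers of tori or of Sylow $p$-subgroups — whose $O_p$ is nontrivial. Matching this against the classifications of \cite{BBGH} (Sylow subgroups lying in a unique maximal subgroup) and \cite{GT} (cyclic subgroups lying in a unique maximal subgroup) should isolate the characteristic-$2$ groups in (ii): with $p=2^a\mp 1$, the relevant torus- and Sylow-$p$-normalizers carry large normal $p$-subgroups, so the local hypothesis can hold even though $R$ sits in several maximal subgroups and is therefore not weakly subnormal. Verifying that no other simple group survives this analysis, and that the noncyclic-Sylow situation is confined to exactly these families, is the delicate part and is where essentially all of the casework resides.
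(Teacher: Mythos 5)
Your regime dichotomy (A: $x\in O_p(H)$ for all proper overgroups, i.e.\ genuine weak subnormality; B: some $H_0$ with $1\neq O_p(H_0)\not\ni x$) is a sensible reorganization, and your regime-(A) analysis via Theorems \ref{t:psolvable} and \ref{t:cyclic} is essentially sound. But the proposal does not prove the lemma: everything that actually produces conclusion (ii) is deferred, and you say so yourself ("the delicate part\dots where essentially all of the casework resides"). Worse, the tool you propose for regime (B) is inapplicable there: \cite{BBGH} and \cite{GT} classify Sylow or cyclic subgroups contained in a \emph{unique} maximal subgroup, a condition which for $p$-subgroups is equivalent to weak subnormality (Remark \ref{r:weakly subnormal}) --- exactly what fails in regime (B), where $x$ may lie in many maximal subgroups. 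The paper instead converts the hypothesis into two usable local statements: if $x$ normalizes a nontrivial $p'$-subgroup $Q$ with $Q\langle x\rangle<G$, then $x$ must centralize $Q$ (otherwise $O_p(Q\langle x\rangle)=1$); and $x$ lies in no proper subgroup with trivial $p$-core (so not in a Young subgroup $\Alt_k\times \Alt_{n-k}$, not in an $\Alt_4$ when $p=3$, and a regular semisimple $x$ lies in no parabolic). The casework then runs: $p=2$ is killed at once by Baer--Suzuki ($x$ lies in a dihedral group of order $2r$, $r$ odd prime, which has trivial $O_2$ and so equals $G$); unipotent elements are handled by the $A_1$-overgroup theorems \cite{Testerman2,Testerman}; and, crucially, for an inner-diagonal regular semisimple $x$ one splits on whether $x$ lifts to order $p$ or $p^2$ in the Schur cover of $S$: order $p$ gives cyclic Sylow $p$-subgroups and conclusion (i), while order $p^2$ forces $p\mid |Z(\hat S)|$, hence $S=\LL_p^{\epsilon}(q)$ (or $E_6^{\epsilon}$, excluded since an element of order $9$ there is not regular semisimple), where $x$ normalizes a diagonal torus and the hypothesis pinches $q-\epsilon$ into a power of $p$, yielding exactly $\LL_p^{\epsilon}(2^a)$ with $2^a-\epsilon=p$ Fermat or Mersenne, or $\UU_3(8)$ with $p=3$. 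This Schur-cover mechanism --- the actual source of the noncyclic Sylow subgroups and of conclusion (ii) --- is absent from your outline, and "matching torus normalizers against \cite{BBGH}" would not find it.

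A secondary, repairable flaw: your elimination of Theorem \ref{t:cyclic}(vi)--(vii) in regime (A) is wrong as stated. If $o(x)=2$ and $x$ swaps two components $T_1,T_2$, then $N_G(T_1)=E(G)$ and $N_G(T_1)/C_G(T_1)\cong T_1$, which is perfectly consistent with the listed possibility $N_G(T)/C_G(T)\cong \LL_2(q)$, $q>7$ Mersenne (a simple group, so "only inner automorphisms" contradicts nothing). The correct exclusion is the wreath argument: pick $a\in T_1$ of odd prime order; then $H=\bigl(\langle a\rangle\times\langle a\rangle^x\bigr)\langle x\rangle$ is a proper overgroup of $x$ with $O_2(H)=1$ (in the spirit of Lemmas \ref{l:wreath} and \ref{l:wreath2}), violating both weak subnormality and the lemma's hypothesis; alternatively, cite Corollary \ref{c:cyclic34}(i), which already shows $|R|=2$ forces $G$ dihedral. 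Note also that the paper does not organize the proof around your dichotomy at all: it disposes of $p=2$ upfront, shows a noncentral $F(G)$ gives (i), reduces to $E(G)$ quasisimple, and then interleaves both of your regimes within a single CFSG analysis.
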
 

\begin{proof}  If $p=2$, the result follows since $x$ must be contained in a dihedral group of order $2r$ for some odd prime $r$, by the Baer-Suzuki theorem. So assume that $p$ is odd. 

Suppose that $F(G)$ is noncentral.  Then $x$ acts nontrivially on some $Q:=O_r(G)$ for $r \ne p$
and so $G = \langle O_r(G), x \rangle$.  Moreover, $x$ must act irreducibly on $O_r(G)/\Phi(O_r(G))$ and centralize $\Phi(O_r(G))$
whence $\langle x \rangle$ is weakly subnormal in $G$.  It follows that $F(G)$ is central and indeed is contained in the Frattini subgroup
of $G$ (otherwise $G$ contains a supplement to  $F(G)$ which contradicts the fact that $G$ is the normal closure of $\langle x \rangle$).

Thus, $G=\langle E(G), x \rangle$ and $x$ acts transitively on the components of $E(G)$.   If there is more than $1$ component, 
$x$ will normalize a Sylow $r$-subgroup of $E(G)$ for any $r \ne p$,  a contradiction.

So $S:=E(G)$ is quasisimple.    If $S/Z(S)$ is sporadic, this it is a straightforward computation to check that (i) holds.  If $S/Z(S)$ is an alternating
group and $n > p \ge 5$, $x$ is in a Young subgroup that is a product of two nonabelian simple groups, contrary to assumption. So it reduces to the case
$n=p$ where the result is clear. If $p=3$, it reduces to the cases of $A_5$ and $A_6$.  In those cases an element of order $3$ is contained in a subgroup isomorphic to $A_4$.

So assume that $S/Z(S)$ is a finite simple group of Lie type in characteristic $r$.  If $r=p$ and $x \in S$ is unipotent, then
$x$ is in some subgroup $K \cong \SL_2(p)$ or $\PSL_2(p)$ \cite{Testerman2, Testerman} unless possibly  $p=3$ 
 (recall $p \ne 2$) and 
 $S={\rm G}_2(q)$ or ${^2}{\rm G}_2(3^a)$.    If $S=\PSL_2(p)$ or $\SL_2(p)$, then $p > 3$, $\langle x \rangle$ is weakly subnormal, and Sylow $p$-subgroups of $S$ are cyclic.
 
In the case of ${^2}{\rm G}_2(3^a)$,  there are two conjugacy classes of subgroups of order $3$.
One is contained in a $\PSL_2(3^a)$ and the other normalizes but does not centralize a maximal torus,
whence the result holds.  
If $G={\rm G}_2(q)$, then any class of elements of order $3$ other than  the class $(\tilde{A_1})_3$
is contained in  an $A_1$ subgroup.  If $x \in (\tilde{A_1})_3$,  then $x$  is conjugate to an element of ${\rm G}_2(3)$. 
  
So we may assume that either $x\not\in S$, or $r\neq p$. Suppose first that $x$ is an inner diagonal automorphism of $S$, so that $r\neq p$. Then $x$ cannot normalize any parabolic subgroup (because
 then $x$ normalizes but does not centralize its unipotent radical).  So $x$ is a regular semisimple element.
 
If $x$ lifts to an element $\hat{x}$ of order $p$ in the Schur cover $\hat{S}$ of $S$, then the Sylow $p$-subgroup of $S$ is cyclic. The only overgroups of $x$ with nontrivial $p$-core are contained in the normalizer of $\langle x \rangle$
and so $x$ is weakly subnormal.   
 
If $x$ lifts to an element $\hat{x}$ of order at least $p^2$ in $\hat{S}$, then $p$ must divide the order of the center of $\hat{S}$ and $\hat{x}^p$ is central (and must be trivial in $G$). The only possibilities are $S/Z(S) = \LL_p^{\epsilon}(q)$; or $p=3$ and $S=E_6^{\epsilon}(q)$. It is clear that the latter case does not occur (an element of order $9$ is not
 regular semisimple in $E_6^{\epsilon}(q)$). In the former case, $x$ will normalize a diagonal torus,
 so our hypothesis implies that $q-\epsilon$ is a power of $p$. Thus, either $(q,\epsilon,p)=(8,-1,3)$, or $q=2^a$ with $p=2^a-\epsilon$ a Fermat or Mersenne prime. Hence, (ii)  holds.   
   
Suppose next that either $x$ is a field automorphism; or that $p=3$, $S={^3}{\rm D}_4(q)$, and $x$ is a graph automorphism. Then $x$ normalizes a Borel subgroup and so acts nontrivially on a Sylow $r$-subgroup of $S$. It follows from our assumption that $r = p$. Then $x$ acts nontrivially on some maximal torus and so the result holds.   
 
The remaining case is $p=3$
and $x$ induces a graph or graph-field automorphism of $S/Z(S)={\rm D}_4(q)$.  If $x$ is a graph or graph-field automorphism of order $3$,  then $x$ acts nontrivially
on a long root subgroup and the result follows unless $r=3$. So assume $r=3$. If $x$ is a graph-field automorphism,
$x$ centralizes a  torus $T$ contained in $C_S(x) = {^3}{\rm D}_4(q)$ and acts nontrivially on $C_S(T)$
(which has trivial $3$-core and so the result follows). If $x$ is a graph automorphism of order
$3$ and $q$ is not a power of $3$,  then $x$ normalizes but does not centralize a long root subgroup.
If $q$ is a power of $3$,  $x$ normalizes ${\rm D}_4(3)$. 
 
\end{proof}

We next prove a strong version of Conjecture \ref{conjD} under the assumption that $O_p(G)$ is abelian.

\begin{thm}\label{t:conjDOp}
Let $G$ be a finite group and let $p$ be a prime.   Assume  
that $O_p(G)$ is abelian.
Let $x\in G$ be an element of order $p$ not contained in $O_p(G)$.  
 Then there exists  $g \in G$ such that $y=[x,g]$ is a nontrivial $p'$-element. 
\end{thm}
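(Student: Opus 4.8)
The plan is to argue by contradiction, taking a counterexample $(G,x)$ of minimal order: thus $x$ has order $p$, $O_p(G)$ is abelian, $x\notin O_p(G)$, and $[x,g]$ is $1$ or $p$-singular for every $g\in G$; call this property $\PP$, and note it passes to every subgroup containing $x$. First I would record the easy reductions. If $N\unlhd G$ contains $x$ then $O_p(N)\unlhd G$, so $O_p(N)\leq O_p(G)$ is abelian and $x\notin O_p(N)$; since $\PP$ holds in $N$, minimality forces $N=G$, i.e. $G=\langle x^G\rangle$. Next, for a prime $q\neq p$ and $y\in O_q(G)$ one has $[x,y]\in O_q(G)$, a nontrivial-or-trivial $p'$-element, so $\PP$ forces $[x,y]=1$; hence $x$, and therefore $G=\langle x^G\rangle$, centralizes $O_{p'}(G)$, giving $O_{p'}(G)\leq Z(G)$. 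Because $O_{p'}(G)$ is then central, $O_p(G/O_{p'}(G))\cong O_p(G)$ stays abelian and $\PP$ descends to $G/O_{p'}(G)$; moreover a nontrivial $p'$-commutator downstairs lifts to one upstairs (its $p$-part would lie in the $p'$-group $O_{p'}(G)$, hence is trivial). So minimality gives $O_{p'}(G)=1$. I would also observe that when a Sylow $p$-subgroup of $G$ is abelian, Theorem \ref{t:abeliansylow} applies to $G$ directly and produces a nontrivial $p'$-commutator, contradicting $\PP$; so I may assume Sylow $p$-subgroups are nonabelian.

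Next I would analyse the layer. By Lemma \ref{l:wreath2}, if $x$ failed to normalise a component $Q$ then, choosing a prime $r\neq p$ dividing $|Q|$, there would be an $r$-element $y\in E(G)$ with $[x,y]$ a nontrivial $r$-element, contradicting $\PP$; hence $x$ normalises every component, and since $G=\langle x^G\rangle$ each component is normal in $G$. If $x$ centralises $E(G)$ and $E(G)\neq 1$, then $D:=C_G(E(G))$ is a proper subgroup (as $E(G)$ is nonabelian) containing $x$ with $O_p(D)\leq O_p(G)$ abelian and $x\notin O_p(D)$, so minimality applied to $D$ yields a $p'$-commutator, a contradiction. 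Thus either $E(G)=1$, or $x$ induces a nontrivial automorphism on some normal component $Q$.

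In the second situation, for $y\in Q$ we have $[x,y]\in Q$, and the task becomes producing $y\in Q$ with $[x,y]$ a nontrivial $p'$-element. Since Sylow $p$-subgroups are nonabelian, $\langle x\rangle$ satisfies the almost weakly subnormal hypothesis of Lemma \ref{l:almostwsn} on the almost simple section $N_G(Q)/C_G(Q)$, so the structure of $Q$ and of the automorphism induced by $x$ is pinned down by Lemma \ref{l:almostwsn} and Theorem \ref{t:cyclic}. For all but a short explicit list this furnishes the required $p'$-commutator by the same regular-semisimple/unipotent computations used in the proof of Theorem \ref{t:thmA}; the finitely many exceptions ($\LL_p^{\epsilon}(2^a)$, $\UU_3(8)$, and the small $p=2,3,5$ groups) are checked by hand, e.g. via the matrix identity for $[\hat{x},\mathrm{diag}(y,y^{-1})]$ used there.

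The remaining case $E(G)=1$ is the main obstacle. Here $F^*(G)=O_p(G)=:K$ is abelian and $C_G(K)=K$, so $\bar G:=G/K$ acts faithfully on $K$ with $O_p(\bar G)=1$ and $\bar x\neq 1$ of order $p$. If $\bar x$ moves $O_{p'}(\bar G)$, a coprime-action argument supplies an $x$-invariant Hall $p'$-subgroup of the relevant preimage on which $x$ acts nontrivially, yielding a $p'$-commutator and a contradiction; this forces $\bar x$ to centralise $O_{p'}(\bar G)$, and hence, as $\bar x$ has order $p$, to act nontrivially on a component of $\bar G$. The difficulty is that such a component acts on the $p$-group $K$, so a $p'$-commutator in $\bar G$ need not remain $p'$ in $G$, and controlling this $p$-part is where the real work lies. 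I expect to resolve it by combining minimality (applied to the preimage of the normal closure of the component, when that is proper) with a classification of the possible modules $K$ for an order-$p$ automorphism of the relevant quasisimple group, exactly as in the analysis underlying Lemma \ref{l:almostwsn}, reducing once more to the explicit list above.
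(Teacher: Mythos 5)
Your opening reductions are sound (indeed your route to $O_{p'}(G)=1$ via first showing $O_{p'}(G)\le Z(G)$ is cleaner than the paper's one-line assertion), and quoting Theorem \ref{t:abeliansylow} to kill the abelian-Sylow case is fine. But the proof collapses exactly at the point you yourself call ``the main obstacle'', and the collapse is fatal, not cosmetic. In the case $F^*(G)=O_p(G)=:K$, your dichotomy rests on the claim that if $\bar x$ acts nontrivially on $O_{p'}(\bar G)$, then ``a coprime-action argument supplies an $x$-invariant Hall $p'$-subgroup of the relevant preimage'' $L$. The action of $\langle x\rangle$ on $L$ is \emph{not} coprime ($p$ divides $|K|$, hence $|L|$), so no Glauberman-type fixed-point lemma applies; and the conclusion is genuinely obstructed. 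Writing $L=K\rtimes U$ by Schur--Zassenhaus, the Hall $p'$-subgroups of $L$ are the $U^w$ with $w\in K$, they form a torsor under $K/C_K(U)$, and $x=vy$ (with $v\in K$ and $y$ normalizing $U$, say) normalizes one of them if and only if $v\in [K,x]+C_K(U)$. The configurations in which this fails -- $C_K(U)=1$ and $v\notin[K,x]$ -- are precisely the ones isolated in Proposition \ref{p:wsnprime}(b) and Corollary \ref{p:minimal2}(i) as the possible minimal counterexamples to Conjecture \ref{conjD}; in other words, the structure you are trying to exclude is exactly the structure in which your claim is false, so invoking it begs the question. The remainder of that case is then explicitly speculative (``I expect to resolve it by\dots''), so the heart of the theorem is missing. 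A secondary, smaller gap: in the branch where $x$ acts nontrivially on a component $Q$, you apply Lemma \ref{l:almostwsn} to the section $N_G(Q)/C_G(Q)$ without verifying its hypothesis that every proper overgroup of the image of $x$ has nontrivial $p$-core; minimality gives this for $G/K$ (as in the paper), but not, at least not obviously, for $G/C_G(Q)$.

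For contrast, the paper never splits on $E(G)$ at all. By minimality, every proper $H$ with $\langle V,x\rangle\le H<G$ (where $V=O_p(G)$) has $O_p(H/V)\neq 1$, so Lemma \ref{l:almostwsn} applies directly to $G/V$; the $p$-part problem you identified is then handled by an entirely different mechanism. First one chooses $g$ so that $[x,g]=yv$ with $y$ a nontrivial $p'$-element, $v\in V$, and -- crucially -- $xV$ and $yV$ \emph{invariably generate} $G/V$ (this uses Gow's theorem in the Lie-type cases). Second, one proves the module identity $W:=[V,G]=[W,G]$ and that $v\in W$. Third, since $G$ has no nonzero fixed points on the character group of $W$, invariable generation forces $\chi(x)\chi(yw)=0$ for every $w\in W$ and every $\chi\in\Irr(G)$ nontrivial on $W$; hence the class-algebra count
\[
\sum_{\chi\in\Irr(G)}\frac{|\chi(x)|^2\,\chi(yw)}{\chi(1)},
\]
which (up to a positive constant) counts the ways of writing $yw$ as a product of a conjugate of $x^{-1}$ and a conjugate of $x$, is the same for all $w\in W$. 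It is positive at $w=v$ because $yv=[x,g]$, so it is positive at $w=1$, and therefore the honest $p'$-element $y$ is conjugate to some $[x,g']$, which is then the required nontrivial $p'$-commutator. Nothing in your proposal substitutes for this invariable-generation/character-counting step, and the alternative you suggest (classifying the possible modules $K$) does not address the real difficulty, which is not which module $K$ is but how to control commutators of the specific ``affine'' element $x=vy$ whose $V$-part lies outside $[V,x]$.
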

 \begin{proof} Let $(G,x)$ be a counterexample to the theorem  with $|G|$ minimal, and set $V:=O_p(G)$. By the Baer-Suzuki theorem, $p > 2$. Also, by the minimality of $(G,x)$ as a counterexample, we have $O_{p'}(G)=1$.

 If $x$ is contained in a proper subgroup $H$ of $G$
  with $O_p(H)\le V$, then the result follows by induction (since $O_p(H)$ is still abelian).   So
  we may assume this is not the case. In particular, $G=\langle x^g \mid g \in G \rangle$ and $O_p(H/V)\neq 1$ for all proper overgroups $H$ of $x$ containing $V$, so the previous lemma applies (in $G/V$). 

We first show that there exists a $g \in G$ such that $[x,g]=y$ reduces to a $p'$-element
in $G/V$ and moreover,  $xV$ and $yV$ invariably generate $G/V$.   First assume that 
$\langle x \rangle$ is weakly subnormal in $G$.  Then we just need to choose $g$ so that
$yV$ is a $p'$-element not conjugate to an element of $M$, the unique maximal subgroup of $G$ containing $x$.

If $G$ is $p$-solvable, then this is clear. Indeed, writing $G/V=Q\langle xV\rangle$ as in Theorem \ref{t:psolvable}, we have that $xV$ and any element of $Q \setminus Z(Q)$ generates $G/V$.   
So assume that $G$ is not $p$-solvable and so by the (proof of the) previous lemma, one
of the following holds:
\begin{enumerate}
\item  $G/V$ is a sporadic simple group;
\item  $G/V \cong A_p, p \ge 5$; 
\item $G/V \cong \SL_2(p)$ or $\PSL_2(p)$;  or
\item $G/V$ is a quasisimple group of Lie type,  $x$ is a regular semisimple element
not contained in any parabolic subgroup and the Sylow $p$-subgroup of $G$ is cyclic.
\end{enumerate}

The first case is an easy computation in MAGMA.   For alternating groups, we
choose $g$ so that $y$ is $3$-cycle which does not normalize an element of order $p$.
In the case of $\SL_2(p)$ or $\LL_2(p)$, a straightforward computation shows that we can
choose $g$ so that $[x,g]$ is an element of a nonsplit torus (and so is not in a Borel subgroup).

In the fourth case since $x$ is a regular semisimple,   given any semisimple element $y$
we can choose $g$ so that $[x,g]=y$ by Gow's result \cite{Gow}.  In particular, we can choose $y$ to have order prime to $p$
and not contained in $N_G(\langle x \rangle)$ (for example choose $y$ to be regular semisimple 
in some maximal torus that has order prime to $p$).   

Suppose finally that case (ii) from Lemma \ref{l:almostwsn} holds.  In this case $x$ is contained in exactly two maximal subgroups (the normalizer of a quasi-split torus and the normalizer of an irreducible torus).  In particular,
$x$ is regular semisimple and by a slight extension of the result of Gow, we can choose
$g$ with $y=[x,g]$ any noncentral semisimple element in the derived subgroup.  Again, we
choose $y$ to be a regular semisimple of order prime to $p$  in a maximal torus that has order
prime to $p$ and is neither in the quasi-split torus nor the irreducible torus.

So we have shown in all cases, that we can choose $g \in G$ such that $[x,g]=yv$
where $y$ is a nontrivial $p'$-element and $v \in O_p(G)$.   Moreover, 
$x$ and $y$ invariably generate $G$ (all we require is that they invariably generate $G$ modulo $O_p(G)$).

Next, let $W:=[V,G]$. We claim that $[V,G]=[V,G,G]$, and that the element $v$ above can be taken to be an element of $W$. To see this, note first that $V/W$ is the trivial module. If $G$ is $p$-solvable, then since $G=(V\rtimes Q)\rtimes\langle x\rangle$, we can argue as in the proof of Proposition \ref{p:wsnprime} to see that $C_Q(V)=1$. Thus, $V=[V,Q]=[V,G]$, so $V=[V,G]=[V,G,G]$, which proves the claim.

Assume now that $G$ is not $p$-solvable. If $\langle x\rangle$ is weakly subnormal, then one of the cases (1)--(4) above holds. Using Theorem \ref{t:cyclic} for case (1), we see that the $p$-part of the Schur multiplier is trivial in each case. If $W\neq V$, then it follows that either $W=V$, giving us what we need, or $G/W\cong A\times J$, where $A$ is an abelian $p$-group, and $J$ is as in (1)--(4). In the latter case, since $G=\langle x^g\mid g\in G\rangle$, and $G$ can be invariably generated by $x$ and $yv=[x,g]$ with $y$ a $p'$-element, we have $A\cong C_p$. It follows that the Schur multiplier of $A\times J$ also has trivial $p$-part, whence the same argument shows that $G/[W,G]\cong G/W$, i.e. $W=[W,G]$. Since $yv=[x,g]$, we have $v\in W$, as needed. 

Suppose finally that case (ii) in Lemma \ref{l:almostwsn} holds. Then $J:=G/V$ is almost simple with socle either $\LL_p^{\epsilon}(2^a)$ with $2^a-\epsilon=p$ a Fermat or Mersenne prime; or $\mathrm{U}_3(8)$ with $p=3$. Arguing as above, we see that the only possibilities are $W=[W,G]$ or $G/[W,G]\cong A\times \hat{J}$, where $\hat{J}$ is a Schur cover of $J$ with cyclic centre of order divisible by $p$, and $A\cong C_p$. Suppose that the latter case holds. Then $V/[W,G]=A\times Z(\hat{J})\le Z(G/[W,G])$, which implies $[V,G]=[W,G]$, i.e. $W=[W,G]$. Again, since $yv=[x,g]$, we must have $v\in W$, whence the claim.

Now, the fact that $[W,G]=W$ implies that $G$ has no nontrivial fixed points on $W^*$, the character group of $W$. Since $x$ invariably generates $G$ with any element from $yW$,
this implies that for any nontrivial linear character $\phi$ of $W$,  the stabilizer of $\phi$ cannot contain a
conjugate of both $x$ and an element of $yW$.   Thus, for any irreducible character $\chi$ of
$G$ that is nontrivial on $W$, we have $\chi(x)\chi(yw)=0$ for all $w \in W$.  

Let $N$ be the number of ways of writing an element of $y^G$ as  product of conjugates of $x$
and $x^{-1}$.   Up to a constant, this is
$$
\sum_{\chi}  \frac{|\chi(x)|^2\chi(y)} {\chi(1)},
$$
where the sum is over all irreducible characters of $G$.
By the above remarks, it suffices to only consider characters of $G/W$ and in particular,
this number is the same for all $yw$ and in particular,  $[x,g]$ is a $p'$-element for some $g \in G$.
\end{proof}

In particular, we have the following:
 
 \begin{cor}\label{c:conjDOp}
Let $G$ be a finite group and let $p$ be a prime. 
Let $x\in G$ be a nontrivial element of order $p$ and assume that a Sylow $p$-subgroup of $G$ is abelian.
 Then $[x,g]$ is a nontrivial $p'$-element for some $g \in G$.
  \end{cor}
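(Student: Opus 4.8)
The plan is to deduce the statement immediately from Theorem \ref{t:conjDOp} by observing that the hypothesis on Sylow $p$-subgroups forces $O_p(G)$ to be abelian. Concretely, I would let $P$ be a Sylow $p$-subgroup of $G$, which is abelian by assumption. Since $O_p(G)$ is a normal $p$-subgroup of $G$, it is contained in every Sylow $p$-subgroup of $G$; in fact $O_p(G)=\bigcap_{g\in G}P^g$, and as each $P^g$ is abelian (being conjugate to $P$), the subgroup $O_p(G)$ is abelian. This is exactly the standing hypothesis of Theorem \ref{t:conjDOp}.

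With this reduction in hand, the only remaining step is to invoke the theorem. For $x$ of order $p$ with $x\notin O_p(G)$, Theorem \ref{t:conjDOp} directly produces an element $g\in G$ such that $[x,g]$ is a nontrivial $p'$-element, which is precisely the desired conclusion. I would also record that the relevant hypothesis here is $x\notin O_p(G)$: if instead $x\in O_p(G)$, then $[x,g]=x^{-1}x^g\in O_p(G)$ is a $p$-element for every $g\in G$, so no such $g$ can exist. Thus the statement is to be read, as in the contrapositive form of Conjecture \ref{conjD}, for $x$ outside $O_p(G)$, the case $x\in O_p(G)$ being the trivial instance where the conclusion of Conjecture \ref{conjD} already holds.

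Since the argument is a direct specialisation of Theorem \ref{t:conjDOp}, I do not expect any genuine obstacle: the entire content is the elementary observation that an abelian Sylow $p$-subgroup makes $O_p(G)$ abelian. All of the substantive work—the reduction through Lemma \ref{l:almostwsn} to the structure given by Theorems \ref{t:psolvable} and \ref{t:cyclic}, and the final character-theoretic counting that shows $[x,g]$ lands in a $p'$-element—has already been carried out in the proof of Theorem \ref{t:conjDOp}.
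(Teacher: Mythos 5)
Your proposal is correct and matches the paper's intended argument exactly: the paper derives Corollary~\ref{c:conjDOp} from Theorem~\ref{t:conjDOp} with no further argument (``In particular, we have the following''), the entire content being your observation that $O_p(G)$, lying inside an abelian Sylow $p$-subgroup, is itself abelian. Your further remark that the conclusion must be read for $x\notin O_p(G)$ (since for $x\in O_p(G)$ every commutator $[x,g]$ lies in $O_p(G)$ and so is a $p$-element) is the correct reading of the corollary, which implicitly carries over that hypothesis from Theorem~\ref{t:conjDOp}.
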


Let $G$ be a finite group and let $p$ be a prime. In \cite[Theorem 2.1]{GR}, the authors proved that if $x$ is a $p$-element for some prime $p$ and assume that $[x,g]=1$ or $[x,g]$ is $p$-singular for every $g\in G$, then $x\in  O_p(G)$ provided that $G$ has a cyclic Sylow $p$-subgroup. The authors then ask whether this could be true without the restriction on the Sylow $p$-subgroups. By the Baer-Suzuki theorem, this question has a positive answer if $x$ is an involution. To see this, assume that $x\in G$ is an involution and that $[x,g]=1$ or $2$-singular for every $g\in G.$ Since  $[x,g]=xx^g$, if we can show that $[x,g]=xx^g$ is a $2$-element for every $g\in G$, then $\langle x,x^g\rangle$ is a $2$-group for every $g\in G$ and thus by Baer-Suzuki theorem, $x\in  O_2(G)$.  Assume that this is not the case and let $g\in G$ be such that $z:=[x,g]=xx^g$ is not a $2$-element. Then $o(z)=2^am$, where  $a,m$ are integers with $m>1$ being odd. Note that $z^x=z^{-1}$. Let $y=z^{2^a}$. Then $o(y)=m$ is odd and $y^x=y^{-1}$. So $[x,y]=x^{-1}y^{-1}xy=(y^x)^{-1}y=y^2$ is $2$-regular, so $y^2=1$ which forces $y=1$, a contradiction.   In Theorem \ref{t:abeliansylow}, we generalized to the case
of abelian Sylow $p$-subgroups. 

However, this question turns out to be false for other nontrivial $2$-elements in general. The group $\GL_2(3)$ has an element $x$ of order $8$ such that $o([x,g])\in\{1,4,6\}$ for all $g\in\GL_2(3)$ but $x\not\in  O_2(\GL_2(3))$.  

 The following is a structure result for groups with a weakly subnormal subgroup of prime order.

\begin{prop} \label{p:wsnprime}  Let $G$ be a finite group with a weakly subnormal subgroup $R=\langle x\rangle$ of prime order $p$. Then either
\begin{enumerate}[\rm (i)]
\item $p=2$ and $G\cong D_{2q}$ with $q$ an odd prime; or

\item $p$ is odd and one of the following holds.
\begin{enumerate}
\item[\rm(a)] $O_{p'}(G)$ is non-central and $G = QR$ with $Q=O_{p'}(G)$ a special $q$-group, and $R$ acting faithfully and irreducibly on $Q/\Phi(Q)$.

\item[\rm(b)]  $O_{p'}(G) \le Z(G) \cap \Phi(G)$, and $G/\Phi(G)=V\rtimes L$, where $V:=O_p(G)\Phi(G)/\Phi(G)$ and $L$ is a completely reducible subgroup of $\GL(V)$ of shape $L=Q\rtimes\langle y\rangle$, with $Q$ a nonabelian special $q$-group, $q\neq p$, and $|y|=p$. Further, $x=vy$ for some $v\in V$. 
\item[\rm(c)] $G/O_p(G)$ is quasisimple.
\end{enumerate}
\end{enumerate}
\end{prop}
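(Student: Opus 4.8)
The plan is to run the whole argument on the dichotomy supplied by Lemma~\ref{l:r-subs}(iv): either $O_{p'}(G)$ is noncentral, or $O_{p'}(G)\le Z(G)\cap\Phi(G)$. In the noncentral case Lemma~\ref{l:r-subs}(iv) already gives $G=QR$ with $Q=O_{p'}(G)$ a $q$-group, and I would first note that $R$ acts faithfully: since $|R|=p$ the action is trivial or faithful, and a trivial action makes $R$ normal in $G=QR$, contradicting that $R$ is not subnormal. Then $C_G(Q)$ is a $q$-group, so $O_p(G)=1$ and $G=QR$ is $p$-solvable with $O_p(G)=1$; thus Theorem~\ref{t:psolvable} applies verbatim, giving that $Q$ is special and $R$ is faithful and irreducible on $Q/\Phi(Q)$. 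For $p$ odd this is exactly (ii)(a). For $p=2$, a faithful irreducible $\mathbb{F}_q$-representation of $C_2$ ($q$ odd) is one–dimensional, so $Q/\Phi(Q)$ is one–dimensional, $Q$ is cyclic, hence $Q=C_q$ with $R$ inverting it, giving $G\cong D_{2q}$, i.e.\ (i).

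Now assume $O_{p'}(G)\le Z(G)\cap\Phi(G)$. Here $p=2$ cannot occur: by Corollary~\ref{c:cyclic34}(i) a weakly subnormal subgroup of order $2$ forces $G\cong D_{2q}$, whose $O_{2'}$ is noncentral. So $p$ is odd. If $G$ is not $p$-solvable, I would pass to $\bar G=G/O_p(G)$; since $x\notin O_p(G)$ (as $R$ is not subnormal), $\bar R$ is cyclic weakly subnormal of order exactly $p$ in a non-$p$-solvable group with $O_p(\bar G)=1$. Theorem~\ref{t:cyclic} then applies, and because $p$ is odd and $|R|=p$ is prime, all cases there are excluded except (i): cases (ii),(iii) require $|R|=25,9$, and (iv)--(vii) require $p=2$. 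Hence $G/O_p(G)$ is quasisimple, which is (ii)(c).

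The remaining case, $O_{p'}(G)$ central with $G$ $p$-solvable, is the hardest and leads to (ii)(b). First I would observe that $G$ is in fact solvable (apply Theorem~\ref{t:psolvable} to $G/O_p(G)$). Put $\tilde G=G/\Phi(G)$. Using $O_{p'}(G)\le\Phi(G)$ together with Gasch\"utz's theorem $F(\tilde G)=F(G)/\Phi(G)$, one checks $\Phi(\tilde G)=1$, $O_{p'}(\tilde G)=1$, and $F(\tilde G)=V:=O_p(G)\Phi(G)/\Phi(G)$, with $V\neq 1$ (else $F(G)=\Phi(G)$, impossible for a nonnilpotent solvable group). As $\tilde G$ is solvable with $\Phi(\tilde G)=1$, the Fitting subgroup $V$ is self-centralizing, complemented, and a completely reducible faithful module for $\tilde G/V$, so $\tilde G=V\rtimes L$ with $C_L(V)=L\cap V=1$. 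Since $O_p(L)=1$ and the image of $R$ in $L$ is weakly subnormal of order $p$, Theorem~\ref{t:psolvable} gives $L=Q\rtimes\langle y\rangle$ with $Q=O_{p'}(L)$ special, $|y|=p$, and $y$ faithful irreducible on $Q/\Phi(Q)$; expressing $\tilde x$ in the semidirect product yields $x\equiv vy\pmod{\Phi(G)}$ for some $v\in V$. This gives everything in (b) except that $Q$ is nonabelian, which I expect to be the main obstacle.

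To force $Q$ nonabelian I would argue as follows. Because $G$ is the normal closure of $R$ (Lemma~\ref{l:r-subs}(i)), the central direct factor $C_V(L)$ must be trivial: otherwise $\tilde G=C_V(L)\times\bigl([V,L]\rtimes L\bigr)$, and $\tilde x$ either lies in the proper normal subgroup $[V,L]\rtimes L$ (contradicting that the normal closure is all of $\tilde G$) or projects nontrivially to $C_V(L)$ and so lies in a normal maximal subgroup (contradicting weak subnormality). Thus $C_V(L)=1$, i.e.\ $V=[V,L]$. Now suppose $Q$ were elementary abelian. Then $y$ acts fixed-point-freely on $Q$, hence on the character group $\widehat Q$, so by Clifford theory every nontrivial irreducible constituent of $V$ is induced from $Q$ and restricts to a free $\mathbb{F}_p\langle y\rangle$-module; since $C_V(L)=1$ there are no trivial constituents, so $V|_{\langle y\rangle}$ is free. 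For $x=vy$ to have order $p$ we need $v\in\ker N_y$, where $N_y=1+y+\cdots+y^{p-1}=(y-1)^{p-1}$ in $\mathbb{F}_p[\langle y\rangle]$; freeness gives $\ker N_y=\operatorname{Im}(y-1)=[V,y]$, so $v\in[V,y]$. Then the equation $(y-1)w=v$ is solvable, producing a complement $L^{w}$ to $V$ containing $\tilde x$; any maximal subgroup above $L^{w}$ avoids $V$ while any maximal subgroup above $V\langle y\rangle$ contains $V$, so $\tilde x$ lies in two distinct maximal subgroups, contradicting weak subnormality. Hence $Q$ is nonabelian, completing (b). The delicate point throughout is precisely this interplay between the order-$p$ (norm) condition on $v$ and the existence of complements to $V$, which is exactly what distinguishes case (a) from case (b).
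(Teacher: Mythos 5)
Your proof is correct in substance, and for most of the proposition it runs parallel to the paper: the same dichotomy from Lemma \ref{l:r-subs}(iv), Theorem \ref{t:psolvable} in the non-central case (with the same $p=2$ specialization to $D_{2q}$), Corollary \ref{c:cyclic34}(i) to eliminate $p=2$ in the central case, and Theorem \ref{t:cyclic} to produce case (ii)(c) — steps the paper compresses into two sentences. The genuine divergence is the key step, proving $Q$ nonabelian in case (ii)(b). The paper first shows $v_i\notin[V_i,x]$ for each irreducible summand $V_i$ of $V$ (from uniqueness of the maximal overgroup), then, assuming $Q$ elementary abelian, splits according to whether $Q$ acts homogeneously on $V_1$: the homogeneous case is settled by the structure theory of quasiprimitive linear groups ($Q$ cyclic, $L\le Z(\GL_n(p^n)).p$) plus a field computation, the non-homogeneous case by a permutation-module argument, and both land on $v_1\in[V_1,x]$, a contradiction. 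You prove the same membership — note $[V,x]=[V,y]$ since $V$ is abelian — in one uniform stroke: freeness of $V$ as an $\mathbb{F}_p\langle y\rangle$-module gives $\ker N_y=\operatorname{Im}(y-1)=[V,y]$ for the norm $N_y=1+y+\cdots+y^{p-1}$, the order-$p$ condition forces $v\in\ker N_y$, and then $x$ is $V$-conjugate into a complement, yielding two distinct maximal overgroups. Your route avoids quasiprimitivity and homogeneity analysis entirely and is cleaner; the paper's route gives the finer componentwise information $v_i\notin[V_i,x]$, but at the cost of a heavier case analysis.

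One step needs a small repair. The Clifford-theoretic claim that every irreducible constituent of $V$ nontrivial on $Q$ "is induced from $Q$" is false over $\mathbb{F}_p$: a nontrivial $y$-orbit of characters of $Q$ can be absorbed into a single Galois orbit, making the constituent $Q$-homogeneous and hence not induced. (Concretely, take $p=3$ and $L=C_7\rtimes C_3$ with $y$ acting by multiplication by $2\equiv 3^2\pmod 7$; the unique $6$-dimensional irreducible $\mathbb{F}_3L$-module restricts irreducibly to $C_7$.) The conclusion you actually use — that $V|_{\langle y\rangle}$ is free — survives: extend scalars to $\overline{\mathbb{F}_p}$, where each constituent decomposes into nonzero $Q$-weight spaces; since $C_{\widehat{Q}}(y)=1$, these weight spaces fall into free $\langle y\rangle$-orbits of length $p$, so the extended module is free over $\overline{\mathbb{F}_p}\langle y\rangle$, and freeness descends to $\mathbb{F}_p\langle y\rangle$ because the Jordan type of $y$ is unchanged by field extension. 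With that one-line patch, your argument is complete.
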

\begin{proof}  
If $p=2$ then it is clear that $G\cong D_{2q}$ with $q$ an odd prime, so we will assume for the remainder of the proof that $p$ is odd. 

If $O_{p'}(G)$ is non-central then the result follows from Theorem \ref{t:psolvable}. So assume that $O_{p'}(G)\le Z(G)\cap \Phi(G)$. Then $G/O_p(G)$ is as in (a), so we just need to prove the structure result on $G/\Phi(G)$. Thus, we may assume that $\Phi(G)=1$. Then $O_{p'}(G)=1$, so $F(G)=O_p(G)$ and $G$ embeds as a subdirect subgroup of a group $X:=V_1:L_1\times\hdots\times V_s:L_s$ containing $V_1\times\hdots\times V_s$, where each $V_i$ is an elementary abelian $p$-group, and $L_i\le \GL(V_i)$ is irreducible. In particular, $G$ acts completely reducibly on $F(G)/\Phi(G)=O_p(G)\Phi(G)/\Phi(G)$. 

Now write $x=vy$, with $v=v_1+\hdots+v_s$, $v_i\in V$. If $v_i\in [V_i,x]$ for any $i$, then by replacing $x$ by a $V$-conjugate, we could assume that $v_i=0$. But then $x$ is contained in the maximal subgroup $\langle \hat{V}_i,Q,y\rangle$ of $G$, where $\hat{V}_i=\sum_{j\neq i}V_j$. This is a contradiction, so we have $v_i\not\in [V_i,x]$ for any $i$. 

All that remains is to prove that $Q$ is not elementary abelian. So assume that $Q$ is elementary abelian.
Note first that $Q$ has no fixed vectors in $V$. Indeed, otherwise, $[QV,V]=[Q,V]$ would be a proper $G$-normal subgroup of $V$ contained in $V$. But then $G/[Q,V]\cong ((V/[Q,V])\times Q)\rtimes\langle y\rangle$. It follows that $G$ has a quotient isomorphic to $V/[Q,V]\rtimes \langle y\rangle$, whence has an elementary abelian $p$-quotient of order at least $p^2$. This contradicts $G=\langle x\rangle^G$.

So $Q$ has no fixed vectors on $V$. Since $x$ acts irreducibly on $Q$, it follows that $L$ acts faithfully on each of the groups $V_i$. If $Q$ acts homogeneously on $V_1$, then $L$ is quasiprimitive on $V_1$, since $Q$ is the only non-trivial proper normal subgroup of $G$. By the structure theory of quasiprimitive groups, this would imply that $Q$ is cyclic of order $q$, $n:=\dim{V_1}$ is divisible by $p$, and $L$ lies in $Z(\GL_{n}(p^n)).p$. But then $y$ acts on $V_1=\mathbb{F}_{p^n}$ via $\mu\rightarrow \mu^{n/p}$, for $\mu\in \mathbb{F}_{p^n}$. It follows from an easy field calculation that since $x=vy$ has order $p$, we have $v_1\in [V_1,x]$ --  a contradiction. 

So we must have that $Q$ is non-homogeneous. Since $o(x)=p$, it follows that $V_1$ is a direct sum of permutation modules for $\langle x\rangle$, whence $V_1$ acts transitively by conjugation on the coset $V_1x$. In particular, $x=v_1y$, so by replacing $x$ by a $V_1$-conjugate, we may assume that $v_1=0$. Arguing as in the paragraph above then gives the required contradiction.
\end{proof}

We can now describe the structure of the minimal counterexamples to Conjecture \ref{conjD}.
\begin{cor}\label{p:minimal2}  Let the pair $(G,x)$ be a counterexample to Conjecture \ref{conjD} with $|G|$ minimal. Then $p:=o(x)$ is odd, $\langle x\rangle$ is weakly subnormal in $G$ and there is a unique maximal subgroup $M$ of $G$ containing $x$ with $x\in  O_p(M)$ but $x\not\in  O_p(G)$. Moreover, $G=\langle x^G\rangle$, $O_{p'}(G)=1$, 
and one of the following holds.
\begin{enumerate}[\rm (i)]
\item $G=PQ$,  $P=\langle x\rangle  O_p(G)\in\Syl_p(G)$, $Q  O_p(G)=  O_{p,q}(G)$ for some prime $q\neq p$, $Q$ is a nonabelian special $q$-group, $M=PR_0$ with $P=  O_p(M)$ and  $R_0\leq   Z(Q)$, and $\overline{Q}/  Z(\overline{Q})$ is a faithful irreducible $\mathbb{F}_q\langle \overline{x}\rangle$-module, where $\overline{G}=G/O_p(G)$. In particular, $G$ is solvable.
\item $G/O_p(G)$ is quasisimple.
\end{enumerate}
\end{cor}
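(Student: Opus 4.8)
The plan is to run a minimal-counterexample analysis and feed the resulting group into Proposition~\ref{p:wsnprime}. Write $\mathcal{P}$ for the hypothesis of Conjecture~\ref{conjD}, namely that $[x,g]$ is trivial or $p$-singular for every $g\in G$; this is inherited by every subgroup containing $x$, so it is a Baer--Suzuki property. First I would record that $p$ must be odd: for $p=2$ the conjecture already follows from the Baer--Suzuki theorem (as explained just before the statement), so a minimal counterexample $(G,x)$ has $p$ odd. Minimality then forces $x\in O_p(H)$ for every proper overgroup $H$ of $x$, since $(H,x)$ inherits $\mathcal{P}$ and $|H|<|G|$; thus $\langle x\rangle$ is subnormal in every such $H$ but, as $x\notin O_p(G)$, not in $G$ itself. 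Hence $\langle x\rangle$ is weakly subnormal, and Lemma~\ref{l:r-subs}(i)--(ii) (Wielandt's Zipper Lemma) yields $G=\langle x^G\rangle$, a unique maximal overgroup $M$, and $x\in O_p(M)$.

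The key reduction is $O_{p'}(G)=1$. Set $N=O_{p'}(G)$. For any $g\in N$ the element $[x,g]=(g^{-1})^x g$ lies in the normal $p'$-group $N$, so $\mathcal{P}$ forces $[x,g]=1$; thus $x$ centralizes $N$, and since $G=\langle x^G\rangle$, we get $N\le Z(G)$. To eliminate a central $N$ I would pass to $\overline{G}=G/N$ with $\overline{x}=xN$ of order $p$. As $N$ is a $p'$-group, reduction modulo $N$ preserves the $p$-part of element orders, so $\mathcal{P}$ transfers to $(\overline{G},\overline{x})$. If $N\neq 1$, minimality gives $\overline{x}\in O_p(\overline{G})$, whence the normal subgroup $O_{p',p}(G)$ contains $x$ and therefore equals $G=\langle x^G\rangle$; that is, $G/N$ is a $p$-group. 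Combined with $N\le Z(G)$, this makes $G=N\times P$ nilpotent (Schur--Zassenhaus), so $x\in O_p(G)$, a contradiction. Hence $N=1$.

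With $O_{p'}(G)=1$ and $\langle x\rangle$ a weakly subnormal subgroup of prime order, I would apply Proposition~\ref{p:wsnprime}. Its case (a) requires $O_{p'}(G)$ to be non-central and so is excluded, and its case (c) is precisely conclusion (ii). For case (b) I would pass to $\overline{G}=G/O_p(G)$: since $O_{p'}(G)=1$, the Frattini subgroup $\Phi(G)$ is a $p$-group contained in $O_p(G)$, so the description of $G/\Phi(G)$ in \ref{p:wsnprime}(b) collapses to $\overline{G}\cong Q_0\rtimes\langle\overline{x}\rangle$ with $Q_0$ a nonabelian special $q$-group. Because a $p$-group is weakly subnormal in $G$ iff its image is weakly subnormal in $G/O_p(G)$, and $O_p(\overline{G})=1$, Theorem~\ref{t:psolvable} applies to the $p$-solvable group $\overline{G}$ and shows that $\langle\overline{x}\rangle$ acts faithfully and irreducibly on $Q_0/Z(Q_0)$, which is the module statement with $\overline{Q}=Q_0=O_q(\overline{G})$. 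Unwinding orders then gives conclusion (i): a Sylow $q$-subgroup $Q$ of $O_{p,q}(G)$ is a Sylow $q$-subgroup of $G$ with $QO_p(G)=O_{p,q}(G)$, $P=\langle x\rangle O_p(G)$ is a Sylow $p$-subgroup, and $G=PQ$. Using Lemma~\ref{l:r-subs}(v), namely $F(G)=O_p(G)\le M$, one deduces $O_p(M)=P$, and the explicit shape of the unique maximal subgroup of $\overline{G}$ coming from the proof of Theorem~\ref{t:psolvable} (where $M=R\times\Phi(G)$, so $\overline{M}=\langle\overline{x}\rangle\times Z(Q_0)$) forces $R_0\le Z(Q)$.

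I expect the main obstacle to be the bookkeeping in case (b): correctly matching the ``mod $\Phi(G)$'' description supplied by Proposition~\ref{p:wsnprime}(b) with the ``mod $O_p(G)$'' description demanded by conclusion (i), and in particular verifying $O_p(M)=P$, $R_0\le Z(Q)$, and the faithful irreducibility of the $\langle\overline{x}\rangle$-action on $\overline{Q}/Z(\overline{Q})$. The reduction $O_{p'}(G)=1$ is the other delicate point, since $\mathcal{P}$ only transfers cleanly to the quotient by $O_{p'}(G)$ and a purely central $O_{p'}(G)$ can be ruled out only through the minimality hypothesis.
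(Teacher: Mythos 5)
Your proposal is correct and follows essentially the same route as the paper's proof: minimality gives weak subnormality of $\langle x\rangle$, the commutator argument inside $O_{p'}(G)$ plus minimality in the quotient forces $O_{p'}(G)=1$, and the group is then fed into Proposition~\ref{p:wsnprime}, with case (b) unwound modulo $O_p(G)$ (via Theorem~\ref{t:psolvable} and the shape of the unique maximal subgroup) to yield conclusion (i) and case (c) yielding conclusion (ii). The only cosmetic difference is that the paper phrases the final dichotomy as $p$-solvable versus not $p$-solvable, invoking Theorem~\ref{t:cyclic} directly for the quasisimple case, which is equivalent to your use of the trichotomy in Proposition~\ref{p:wsnprime}.
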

\begin{proof} Clearly $R:=\langle x \rangle$ is a weakly subnormal $p$-subgroup (but we know nothing about $O_p(G)$).   If $O_{p'}(G)$ is not central, then $[x,g]$ is a nontrivial
$p'$-element for any $g \in O_{p'}(G)$ not centralizing $x$. Thus, $O_{p'}(G)\le Z(G)$, and we can pass to $G/O_{p'}(G)$. The minimality of $|G|$ therefore implies that $O_{p'}(G)=1$.    

Assume first that $G$ is $p$-solvable. Then the previous proposition applies and we see that $G=PQ$,  $P=\langle x\rangle  O_p(G)\in\Syl_p(G)$, $Q O_p(G)=  O_{p,q}(G)$ for some prime $q\neq p$, and $Q$ is a special $q$-group. Also, $\overline{G}:=G/O_p(G)$ acts faithfully and compltely reducibly on $O_p(G)/\Phi(G)$; and $\overline{x}$ acts faithfully and irreducibly on $\overline{Q}/  Z(\overline{Q})$.

Thus, all that remains is to show that $M=PR_0$, where $P=  O_p(M)$ and $R_0\leq   Z(Q)$. To see this, note that $O_p(M)$ contains $O_p(G)$, and $\overline{M}=\overline{R}\times \Phi(\overline{G})$. Also, by the previous proposition, either $\overline{Q}$ is elementary abelian and $\Phi(\overline{G})=\Phi(\overline{Q})=1$ or $\overline{Q}$ is special and $\Phi(\overline{G})=\Phi(\overline{Q})=[\overline{Q},\overline{Q}]=Z(\overline{Q})$. It follows that $O_p(M)=O_p(G)R=P$, and $\Phi(\overline{G})=\overline{R_0}$ for some $R_0\le Z(Q)$. The result follows.

So now assume that $G$ is not $p$-solvable.  
Since $p$ is odd and $o(x)=p$, it follows by Theorem \ref{t:cyclic} that 
$G/O_p(G)$ is quasisimple.  
\end{proof}

\vs

Now consider Conjecture \ref{conj: multicommutators}.  Fix a prime $p$   
and consider a minimal counterexample.  Then $\langle x \rangle$ is a weakly subnormal $p$-subgroup of $G$.    If $O_{p'}(G)$ is not central, then
the result is clear.   If $O_{p'}(G)\le Z(G)$, we can pass to the quotient and so $O_{p'}(G)=1$.   Note that if $G$ is a counterexample, then $G/O_p(G)$ is as well
and so $O_p(G)=1$. It follows from Corollary \ref{c:cyclic34} that $|x|\neq 2,4$ (although the case $|x|=2$ can already be dealt with using the Baer-Suzuki theorem). Further, Theorem \ref{t:cyclic} applies. We can rule out the small cases from Theorem \ref{t:cyclic} using GAP.

We note finally that $|x|\neq 3$. Indeed, if $|x|=3$, then Corollary \ref{c:cyclic34} implies that $G=\LL_2(2^e)$ for $e$ an odd prime. If $e=3$, then we verify directly that $\Gamma_k(x)=\Gamma_{k+1}(x)$ consists of $27$ elements of orders $9$, together with the identity element, for all $k\geq 2$. One can then check that there exists elements $g,h\in\Gamma_k(x)$ such that $gh$ is not a $p3$-element. Thus, we have $e > 3$.  We then observe that a Sylow $3$-subgroup of $G$ has order $3$, and so if $[y,x]=z$ with $y$ and $z$ $3$-elements, we see that $x^{-y}xz^{-1}=1$.
Hence, $\langle x,x^y,z\rangle$ is a $(3,3,3)$-group,  i.e. a group generated by two elements
of order $3$ whose product has order $3$. By \cite{Magnus}, such a group has an abelian normal subgroup of index $3$. 
In particular, since a Sylow $3$-subgroup of $G$ has order $3$, the commutator of two elements of order $3$ is a $3'$-group. 
Thus, $\Gamma_k(x)$, for $k > 1$, cannot contain elements of order $3$.
 
We have therefore proved the following:
\begin{prop} \label{p:conj1}   Let $p$ be a prime, and suppose that $(G,x)$ is a minimal counterexample to Conjecture \ref{conj: multicommutators}.  Then $|x|\not\in\{2,3,4\}$, and one of
the following holds:
\begin{enumerate}
\item[\rm(i)] $p \ne 2$, $G$ is a non-sporadic simple group,  a Sylow $p$-subgroup of $G$ is cyclic, and $G$ is given in Table 1; or
\item[\rm(ii)]  $p=2$,  $G=\LL_2(q)$ or $\PGL_2(q)$, $M$ is the normalizer of a nonsplit torus, $q$ is prime, $q \equiv -1 \pmod 8$, and  $|R| \ge 8$.
\item[\rm(iii)]  $p=2$,  $G=E(G)R$ and $E(G)  = T_1 \times \ldots \times T_t, t > 1$ is a minimal normal subgroup 
and if $T = T_1$, then $N_G(T)/C_G(T)$ has a maximal Sylow $2$-subgroup and  $N_G(T)/C_G(T)$ is isomorphic to one of
\[
{\rm PGL}_2(7), \; {\rm M}_{10},  \; {\rm L}_2(q),\; {\rm PGL}_2(q),
\]
where $q >7$ is a Mersenne prime.
\end{enumerate}
\end{prop}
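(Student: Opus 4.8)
The plan is to run the general minimal-counterexample machinery of Section~\ref{s:reduction}. First I would check that the hypothesis of Conjecture~\ref{conj: multicommutators} is a Baer--Suzuki property that is invariant modulo $O_p(G)$. Inheritance by overgroups of $x$ is immediate, since $\Gamma_k^H(x)\subseteq\Gamma_k^G(x)$ whenever $x\in H\le G$. For the $O_p(G)$-invariance, note that $\Gamma_k(xO_p(G))=\Gamma_k(x)O_p(G)/O_p(G)$, and since $O_p(G)$ is a $p$-group, a product $ab$ is a $p$-element in $G$ if and only if its image is a $p$-element in $G/O_p(G)$; taking $b=[1,{}_kx]=1$ shows moreover that in any counterexample every element of $\Gamma_k(x)$ is itself a $p$-element. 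Thus in a minimal counterexample $(G,x)$ I may assume $O_p(G)=1$, and Proposition~\ref{p:reduction} applies. The $p$-solvable possibility is eliminated at once: by Theorem~\ref{t:psolvable} one would have $G=Q\langle x\rangle$ with $Q$ a $q$-group, $q\neq p$, on which $x$ acts nontrivially, so by coprime action $[Q,{}_kx]=[Q,x]\neq1$ and $\Gamma_k(x)$ would contain a nontrivial $q$-element, contradicting that $\Gamma_k(x)$ consists of $p$-elements. Hence $G$ is not $p$-solvable, $O_{p'}(G)=1$, and $\langle x\rangle$ is a cyclic weakly subnormal $p$-subgroup to which Theorem~\ref{t:cyclic} applies.

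I would then match the cases of Theorem~\ref{t:cyclic} to the three conclusions. Since $O_p(G)=O_{p'}(G)=1$, any quasisimple possibility is in fact simple with $Z(G)=1$. The sporadic entries of Table~\ref{tab:thm3}, together with the exceptional almost simple groups ${}^2{\rm B}_2(32).5$ and $\LL_2(8).3$, the group ${\rm M}_{10}$ with $|R|=8$, and the central-product configuration of triple covers of $\Alt_6$ with $N_G(T)/C_G(T)={\rm M}_{10}$, are each bounded in order (in the component case the analysis reduces to the almost simple constituent), so for each I would verify directly by machine that some $a,b\in\Gamma_k(x)$ have $ab$ not a $p$-element. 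This leaves exactly: for $p$ odd, the non-sporadic simple groups with cyclic Sylow $p$-subgroup of Table~\ref{tab:thm3}, which is conclusion~(i) (note a non-abelian simple group has no cyclic Sylow $2$-subgroup, so this never occurs for $p=2$); for $p=2$, the family $\LL_2(q),\PGL_2(q)$ with $q$ prime and $q\equiv-1\pmod 8$, $|R|\geq8$, which is conclusion~(ii); and the product case $E(G)=T_1\times\cdots\times T_t$ of Theorem~\ref{t:cyclic}(vi), which is conclusion~(iii).

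It remains to exclude $|x|\in\{2,3,4\}$, for which I would use Corollary~\ref{c:cyclic34}. For $|x|=2$, part~(i) forces $G\cong D_{2q}$, and for a reflection $x$ and a rotation $g$ the commutator $[g,x]$ is a nontrivial $q$-element, so $\Gamma_k(x)$ contains a $q$-element, a contradiction (this also follows from the Baer--Suzuki theorem). For $|x|=4$, part~(iii) forces $G$ solvable, contradicting that $G$ is not $p$-solvable. For $|x|=3$, part~(ii) forces $G/O_3(G)\cong\LL_2(2^e)$ with $e$ an odd prime, hence $G\cong\LL_2(2^e)$. The borderline case $e=3$, where $G=\LL_2(8)$ has a Sylow $3$-subgroup of order $9$, I would settle computationally. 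For $e>3$ the Sylow $3$-subgroup has order $3$; here, whenever $[y,x]=z$ with $y,z$ of order $3$ one has $x^{-y}xz^{-1}=1$, so $\langle x,x^y,z\rangle$ is a $(3,3,3)$-generated group, which by Magnus~\cite{Magnus} has an abelian normal subgroup of index $3$. Since Sylow $3$-subgroups have order $3$, this forces the commutator of any order-$3$ element with $x$ to be a $3'$-element, so $\Gamma_k(x)$ ($k>1$) contains no element of order $3$; as $\Gamma_k(x)$ consists of $3$-elements it must then be trivial, whence by \cite{GTr} $\langle x\rangle$ is subnormal and $x\in O_3(G)=1$, a contradiction.

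The main obstacle is this last step. The structural reductions of Theorem~\ref{t:cyclic} and Corollary~\ref{c:cyclic34} do not by themselves exclude elements of order $3$: one genuinely needs Magnus's classification of $(3,3,3)$-groups together with the fact that $\LL_2(2^e)$ with $e>3$ has Sylow $3$-subgroups of order exactly $3$ in order to force the contradiction, while the single group $\LL_2(8)$, whose Sylow $3$-subgroup is larger, escapes this argument and must be treated on its own. The remaining labour --- confirming that the finitely many sporadic and exceptional configurations surviving from Theorem~\ref{t:cyclic} fail the hypothesis --- is routine but must be carried out case by case on a computer.
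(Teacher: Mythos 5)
Your proposal follows the paper's proof essentially step for step: the same reduction to $O_p(G)=O_{p'}(G)=1$ via the Baer--Suzuki/weakly-subnormal framework of Section~\ref{s:reduction}, the same elimination of the $p$-solvable case by coprime action, the same matching of the surviving cases of Theorem~\ref{t:cyclic} with conclusions (i)--(iii), the same appeal to Corollary~\ref{c:cyclic34} for $|x|\in\{2,4\}$, GAP for the small cases, and the same Magnus $(3,3,3)$-argument (with $\LL_2(8)$, i.e. $e=3$, treated computationally) to exclude $|x|=3$; your explicit closing appeal to \cite{GTr} merely makes precise a step the paper leaves implicit.

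The one slip is your treatment of the central product of triple covers of $\Alt_6$ (case (vii) of Theorem~\ref{t:cyclic}): this is not a family of bounded order, since the number of components $t$ is arbitrary, so it cannot literally be ``verified directly by machine,'' and your parenthetical reduction to ``the almost simple constituent'' is not justified for iterated commutators. But no computation is needed there at all: in that case $Z(E(G))=Z(G)$ has order $3$, which contradicts the equality $O_{p'}(G)=1$ that you had already established and already used to pass from quasisimple to simple groups. This is exactly how that case disappears in the paper's argument, so the gap is cosmetic and immediately repairable from facts in your own write-up.
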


To see the conjecture holds for a given $G$, it is sufficient to find a $g$ that is not a $p$-element with 
$[g,{}_kx]: = g$. 



\section{Nonlinear multiplicative irreducible characters}\label{s:characters}

In this final section, we present an application of Theorem \ref{t:thmC} to the character theory of finite groups.
Let $G$ be a finite group and let $\chi$ be an irreducible complex character of $G$. Motivated by the concept of multiplicative functions in analytic number theory, Guralnick and Moret\'{o}  \cite{GM} call $\chi$ a  multiplicative character if $\chi(xy)=\chi(x)\chi(y)$ for every nontrivial elements $x,y\in G$ with $(o(x),o(y))=1$. Clearly, every linear character of $G$ is multiplicative. To obtain further examples of multiplicative characters, we need the following notation and concepts from character theory.

We write $\Irr(G)$ for the set of all complex irreducible characters of $G$ and let $\chi\in\Irr(G)$.  We say that $\chi$ vanishes at $g\in G$ if $\chi(g)=0$. If $N$ is a normal subgroup of $G$, we say that $\chi$ vanishes off $N$ if $\chi(g)=0$ for every element $g\in G-N.$ If $g\in G$, then we can write $g=g_pg_{p'}=g_{p'}g_p$, where $g_p$ is a $p$-element, and $g_{p'}$ is a $p'$-element of $G$.
Note that if $\chi$ vanishes off a normal $p$-subgroup of $G$, then $\chi$ is multiplicative.


 Below are some examples of groups with a nonlinear multiplicative character.
\begin{ex} Let $G$ be a finite group and let $p$ be a prime.
\begin{enumerate}[\rm (i)]
\item Recall that a finite group $G$ with $|G|>2$ is called a Gagola group if $G$ has an irreducible character $\chi$
that vanishes on all but two conjugacy classes of $G$. The character $\chi$ above is called a Gagola character. In \cite{Gagola}, Gagola shows that every Gagola group with a Gagola character $\chi$ has a unique minimal normal subgroup $N$ which is an elementary abelian $p$-group for some prime $p$ and that $\chi$ vanishes off $N$. Thus, $\chi$ is multiplicative. 

\item If $G$ is a Frobenius group with Frobenius kernel a $p$-group for some prime $p$, then any nonlinear faithful irreducible character of $G$ is multiplicative. 

\item Trivially, every nonlinear irreducible character of a finite $p$-group is multiplicative.

\item Let $K$ be a proper nontrivial normal subgroup of $G$. The pair $(G,K)$ is called a Camina pair if for every element $g\in G-K$, then $g$ is conjugate to every element in the coset $gK$. Equivalently, $G$ is a Camina pair if and only if every irreducible character $\chi$ of $G$ that does not contain $K$ in its kernel vanishes off $K$ (see \cite[Lemma 4.1]{Lewis}). A result of Camina, (see \cite[Theorem 4.4]{Lewis}) states that if $(G,K)$ is a Camina pair, then either $G$ is a Frobenius group with Frobenius kernel $K$ or one of $G/K$ or $K$ is a $p$-group for some prime $p.$ Thus if $(G,K)$ is a Camina group and $K$ is a $p$-group for some prime $p$, then every nonlinear irreducible character of $G$ that does not contain $K$ in its kernel is multiplicative.
\end{enumerate}
\end{ex}

An irreducible character $\chi\in\Irr(G)$ is said to have $p$-defect zero (or $\chi$ lies in a block of $p$-defect $0$) if $\chi(1)_p=|G|_p,$ where $n_p$ denotes the  $p$-part of the integer $n\ge 1$. The following result due to Kn\"{o}rr characterizes $p$-defect zero irreducible characters.

\begin{lem}\label{lem: defect 0}
Let $G$ be a finite group, $p$ be a prime and $\chi\in\Irr(G)$.  Then the following are equivalent.
\begin{enumerate}[\rm (i)]
\item $\chi$ has $p$-defect zero.
\item  $\chi$ vanishes on every element of order $p$ of $G$.
\item $\chi$ vanishes on all $p$-singular element of $G$.
\end{enumerate}
\end{lem}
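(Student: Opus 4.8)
The plan is to prove the implications (i) $\Rightarrow$ (iii), (iii) $\Rightarrow$ (i), and (ii) $\Rightarrow$ (iii); together with the trivial observation that (iii) $\Rightarrow$ (ii) (an element of order $p$ is in particular $p$-singular), these give all the equivalences. Throughout I would fix a $p$-modular system $(\mathcal{O},k)$, with $\mathcal{O}$ a complete discrete valuation ring whose fraction field and residue field $k$ (of characteristic $p$) are large enough for $G$, and write $v_p$ for the valuation and $n_p$ for the $p$-part of an integer $n$.

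To handle (i) $\Leftrightarrow$ (iii) I would use the interplay between defect and projectivity. For (i) $\Rightarrow$ (iii), the hypothesis $\chi(1)_p = |G|_p$ says $v_p(\chi(1)/|G|)=0$, so the coefficients of the central idempotent $e_\chi = \frac{\chi(1)}{|G|}\sum_{g\in G}\chi(g^{-1})g$ are $p$-integral (each $\chi(g^{-1})$ is an algebraic integer); hence $e_\chi\in\mathcal{O}G$, and since an idempotent cannot lie in the radical, $\chi$ lies in a block by itself and the $\mathcal{O}G$-lattice $L$ affording $\chi$ is projective. The ordinary character of a projective module vanishes on every $p$-singular element (the restriction of $L$ to the cyclic group generated by such an element is free over its Sylow $p$-part, forcing the trace to vanish), so $\chi(g)=0$ for all $p$-singular $g$. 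Conversely, for (iii) $\Rightarrow$ (i), a character vanishing on all $p$-singular classes is a $\mathbb{Z}$-linear combination of the projective indecomposable characters $\Phi_\varphi$ ($\varphi\in\mathrm{IBr}(G)$); since each $\Phi_\varphi(1)$ is divisible by $|G|_p$ (a projective module is free over a Sylow $p$-subgroup), we get $|G|_p\mid\chi(1)$, and as $\chi(1)\mid|G|$ this forces $\chi(1)_p=|G|_p$.

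The genuine obstacle is (ii) $\Rightarrow$ (iii): the refinement, due to Kn\"orr, that vanishing merely on elements of order $p$ already forces vanishing on all $p$-singular elements. A purely arithmetic attempt fails here: the central character value $\omega_\chi(g^G)=|g^G|\chi(g)/\chi(1)$ being an algebraic integer only yields $v_p(\chi(g))\geq v_p(|C_G(g)|)-d$ (with $d$ the defect), which for $p$-singular $g$ at best shows $p\mid\chi(g)$, never $\chi(g)=0$. Instead I would analyze $\chi$ on each $p$-section by Brauer's second main theorem, writing $\chi(us)=\sum_{\varphi\in\mathrm{IBr}(C_G(u))}d^u_{\chi\varphi}\,\varphi(s)$ for a nontrivial $p$-element $u$ and $p$-regular $s\in C_G(u)$, and induct on $o(u)$ to show that the vanishing of $\chi$ on order-$p$ elements propagates through the generalized decomposition numbers $d^u_{\chi\varphi}$ to force them all to vanish, that is, to give (iii). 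Controlling these generalized decomposition numbers inductively is the crux of the matter; in the write-up I would either reproduce Kn\"orr's argument or simply cite his theorem.
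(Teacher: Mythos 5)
Your proposal is correct, and in the end it rests on the same external input as the paper: the paper's entire proof of this lemma is a one-line citation to Corollary 2.1 of Kn\"orr's paper, whereas you prove the classical equivalences yourself and invoke Kn\"orr only for the single implication that genuinely needs him. Your treatment of (i) $\Leftrightarrow$ (iii) is the standard block-theoretic argument and is sound: defect zero makes the central idempotent $e_\chi$ lie in $\mathcal{O}G$, so the lattice affording $\chi$ is projective and its character vanishes on $p$-singular elements; conversely, a character vanishing on all $p$-singular elements lies in the $\mathbb{Z}$-span of the projective indecomposable characters, whose degrees are divisible by $|G|_p$, and together with $\chi(1)\mid |G|$ this forces defect zero. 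You also correctly diagnose that (ii) $\Rightarrow$ (iii) is the deep step and that central-character arithmetic cannot yield it. One caution: your sketched route for that step, via Brauer's second main theorem and an induction on the order of the $p$-part through generalized decomposition numbers, is not Kn\"orr's argument (he works with virtually irreducible lattices and trace maps on endomorphism rings), and as sketched it does not obviously close; so your write-up is complete only under the fallback you yourself name, namely citing Kn\"orr's theorem for (ii) $\Rightarrow$ (iii) --- which is precisely what the paper does for the whole lemma. The net gain of your version is that it isolates exactly which parts of the lemma are elementary block theory and which part carries the real content.
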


\begin{proof}
This is part of Corollary 2.1 in \cite{Knorr}
\end{proof}

We also need to the following result which is a special case of Lemma 2.2 in \cite{GMT}.

\begin{lem}\label{lem:product}
Let $G$ be a finite group and let $a,b\in G$. Let $A=a^G$ and $B=b^G.$ If $\chi\in\Irr(G)$ is constant on $AB$, then $\chi(a)\chi(b)=\chi(ab)\chi(1)$.
\end{lem}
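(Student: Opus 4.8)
The plan is to evaluate the sum $\sum_{x\in A,\, y\in B}\chi(xy)$ in two different ways and compare. For the first evaluation, I would simply exploit the hypothesis: since $a\in A$ and $b\in B$, we have $ab\in AB$, so the constant value taken by $\chi$ on $AB$ is exactly $\chi(ab)$. Hence
\[
\sum_{x\in A,\, y\in B}\chi(xy)=|A|\,|B|\,\chi(ab).
\]
This is the step that ties the hypothesis to the conclusion, and it is the one point that must not be overlooked.

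For the second evaluation I would pass to the group algebra $\mathbb{C}[G]$ and use central characters. Let $\rho$ be a representation affording $\chi$, extended linearly to $\mathbb{C}[G]$, and set $\hat{A}=\sum_{x\in A}x$ and $\hat{B}=\sum_{y\in B}y$. Each class sum lies in the centre of $\mathbb{C}[G]$, so by Schur's lemma $\rho(\hat{A})$ and $\rho(\hat{B})$ are scalar matrices. Computing the scalar by taking a trace gives $\rho(\hat{A})=\frac{|A|\chi(a)}{\chi(1)}I$ (this is the standard formula for the central character $\omega_\chi$), and likewise for $\hat{B}$. Therefore $\rho(\hat{A}\hat{B})=\frac{|A||B|\chi(a)\chi(b)}{\chi(1)^2}I$, and taking traces yields
\[
\sum_{x\in A,\, y\in B}\chi(xy)=\operatorname{tr}\rho(\hat{A}\hat{B})=\frac{|A||B|\chi(a)\chi(b)}{\chi(1)}.
\]

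Equating the two expressions gives $|A||B|\chi(ab)=\frac{|A||B|\chi(a)\chi(b)}{\chi(1)}$, and dividing through by $|A||B|$ and multiplying by $\chi(1)$ produces the desired identity $\chi(a)\chi(b)=\chi(ab)\chi(1)$. I do not expect any genuine obstacle here: the only input beyond elementary manipulation is the fact that class sums act as scalars on an irreducible module (equivalently, the multiplicativity of $\omega_\chi$ on the centre of $\mathbb{C}[G]$), which is entirely standard. The argument is essentially a two-line double count once the observation $ab\in AB$ is made.
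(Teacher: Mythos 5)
Your proof is correct. Note, however, that the paper does not actually prove this lemma: it simply cites it as a special case of Lemma 2.2 in \cite{GMT}, where the result is stated (and proved) for $G$-stable subsets, i.e.\ unions of conjugacy classes, rather than single classes. So what you have done is supply the missing self-contained argument, and it is the standard one: the double count of $\sum_{x\in A,\,y\in B}\chi(xy)$, evaluated once via the hypothesis (every product $xy$ lies in $AB$, and $ab\in AB$, so each term equals $\chi(ab)$, giving $|A||B|\chi(ab)$) and once via Schur's lemma applied to the class sums $\hat{A},\hat{B}\in Z(\mathbb{C}[G])$, whose scalars are the central character values $\omega_\chi(\hat{A})=|A|\chi(a)/\chi(1)$ and $\omega_\chi(\hat{B})=|B|\chi(b)/\chi(1)$, giving $|A||B|\chi(a)\chi(b)/\chi(1)$. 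Equating and cancelling $|A||B|$ yields the identity. Both evaluations are carried out correctly, and the one genuinely content-bearing step --- that constancy of $\chi$ on $AB$ pins the common value to $\chi(ab)$ --- is explicitly flagged. The argument also generalizes verbatim to normal subsets $A$, $B$ on which $\chi(x)=\chi(a)$ for $x\in A$ and $\chi(y)=\chi(b)$ for $y\in B$ (replacing the class-sum scalars by sums over the constituent classes), which is exactly the setting of the cited lemma in \cite{GMT}; so your proof in fact recovers the stronger statement the paper leans on.
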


We first prove the following.

\begin{thm}\label{th:p-core}
Let $G$ be a  finite group. Suppose that $\chi\in\Irr(G)$ is a nonlinear  multiplicative character. Then 
\begin{enumerate}[\rm (i)]
\item If $a,b\in G$ are nontrivial and $(o(a),o(b))=1$, then $\chi(a)=0$ or $\chi(b)=0$. In particular, $\chi(ab)=0$.
\item There exists a prime $p$ and  an element $w\in G$ of order $p$ such that $\chi(w)\neq 0.$
\item Let $p$ be a prime such that $\chi(w)\neq 0$ for some $w\in g$ of order $p$. Then:
\begin{enumerate}
    \item[\rm(a)] $\chi(g)=0$ if $g\in G$ is not a $p$-element.
\item[\rm(b)] $\chi$ vanishes off $O_p(G)$.
\item[\rm(c)] $|G|/\chi(1)$ is a power of $p$, $\chi=\lambda^G$ for some $\lambda\in\Irr(P)$,  and $F^*(G)= O_p(G)$, where $P\in\Syl_p(G)$ and $  F^*(G)$ is the generalized Fitting subgroup of $G$.
\end{enumerate}

\end{enumerate}
\end{thm}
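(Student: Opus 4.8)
The plan is to prove the items in the order (i), (ii), (iii)(a), (iii)(b), and then the three assertions of (iii)(c), invoking at each stage only the previously established items together with Lemmas \ref{lem: defect 0} and \ref{lem:product} and Theorem \ref{t:thmC}. For (i), I would first note that multiplicativity makes $\chi$ constant on the product $a^Gb^G$: every element of $a^Gb^G$ has the form $a^g b^h$ with $a^g,b^h$ nontrivial of coprime orders, so $\chi(a^gb^h)=\chi(a^g)\chi(b^h)=\chi(a)\chi(b)$. Being constant on $a^Gb^G$, Lemma \ref{lem:product} gives $\chi(a)\chi(b)=\chi(ab)\chi(1)$, while multiplicativity gives $\chi(ab)=\chi(a)\chi(b)$; combining and using $\chi(1)>1$ (nonlinearity) forces $\chi(a)\chi(b)=0$, and hence $\chi(ab)=0$. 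For (ii) I argue by contradiction: if $\chi(w)=0$ for every $w$ of prime order, then for each prime $p\mid|G|$ the character $\chi$ vanishes on all elements of order $p$, so by Lemma \ref{lem: defect 0} it has $p$-defect zero and $\chi(1)_p=|G|_p$; as this holds for all primes, $\chi(1)=|G|$, contradicting $\chi(1)^2\leq\sum_{\psi\in\Irr(G)}\psi(1)^2=|G|$ with $|G|>1$.

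Now fix $p$ as in (iii). For (a): if $g$ is a nontrivial $p'$-element then $w,g$ have coprime orders and (i) forces $\chi(g)=0$ since $\chi(w)\neq0$; for a general non-$p$-element $g$, write $g=g_pg_{p'}$, and if $g_p=1$ this is the previous case, while if $g_p\neq1$ both factors are nontrivial of coprime orders, so multiplicativity gives $\chi(g)=\chi(g_p)\chi(g_{p'})=0$. Thus $\chi$ vanishes on all non-$p$-elements. For (b) I would feed this into Theorem \ref{t:thmC}: let $x$ be a $p$-element with $\chi(x)\neq0$ and suppose some $p$-element $y$ makes $z:=xy$ a nontrivial $p'$-element. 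Then $y\neq1$, and $x=zy^{-1}$ is a product of the nontrivial $p'$-element $z$ and the nontrivial $p$-element $y^{-1}$, of coprime orders, so multiplicativity and (a) give $\chi(x)=\chi(z)\chi(y^{-1})=0$, a contradiction. Hence $xy$ is $1$ or $p$-singular for every $p$-element $y$, so Theorem \ref{t:thmC} gives $x\in O_p(G)$; together with (a) this proves $\chi$ vanishes off $O_p(G)$.

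For (iii)(c), the $p$-power claim is immediate: by (a), $\chi$ vanishes on all elements of order $q$ for each prime $q\neq p$, so Lemma \ref{lem: defect 0} gives $\chi(1)_q=|G|_q$, whence $|G|/\chi(1)$ is a power of $p$ and $\chi(1)_{p'}=|G|_{p'}$. To prove $\chi=\lambda^G$, set $V=O_p(G)$; vanishing off $V$ gives $[\chi_V,\chi_V]_V=\frac{1}{|V|}\sum_{v\in V}|\chi(v)|^2=|G:V|$. With the Clifford decomposition $\chi_V=e\sum_{i=1}^{s}\theta_i$, where $s=|G:T|$ and $T=I_G(\theta_1)$, this yields $e^2s=|G:V|$, i.e. $e^2=|T:V|$. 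Comparing $p'$-parts in $\chi(1)=es\,\theta(1)$ (with $\theta(1)$ a $p$-power) against $\chi(1)_{p'}=|G|_{p'}$ forces $e_{p'}=1$, so $|T:V|=e^2$ is a $p$-power and $T$ is a $p$-group contained in some $P\in\Syl_p(G)$. If $\psi\in\Irr(T\mid\theta_1)$ is the Clifford correspondent with $\psi^G=\chi$, then a Mackey and degree computation shows $\psi^P$ is irreducible and $\chi=(\psi^P)^G$, so $\lambda:=\psi^P\in\Irr(P)$ works.

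The remaining and hardest assertion is $F^*(G)=O_p(G)$, equivalently $O_{p'}(G)=1$ and $E(G)=1$. The first is clean: $\chi$ vanishes on $O_{p'}(G)\setminus\{1\}$ while $[\chi_{O_{p'}(G)},1]=\chi(1)/|O_{p'}(G)|>0$, so Clifford theory forces $O_{p'}(G)\leq\ker\chi$, and then the vanishing would give $\chi(1)=0$ unless $O_{p'}(G)=1$. The genuine obstacle is ruling out components. Here I would restrict to $A:=E(G)$; since $A$ centralises $V$ one has $A\cap V\leq Z(A)$, so $\chi_A$ vanishes off $Z(A)$. Decomposing $\chi_A$ by central characters $\mu\in\Irr(Z(A))$, each $\mu$-part again vanishes off $Z(A)$ (by linear independence of the $\mu$ on $Z(A)$), and a Cauchy--Schwarz computation of $[\chi_A,\chi_A]$ from the vanishing forces, for every occurring $\mu$, all of $\Irr(A\mid\mu)$ to share a single common degree. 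Since $A$ is a perfect central product of quasisimple groups, this is impossible: it makes $A/Z(A)$, a product of nonabelian simple groups, of central type, contradicting the solvability of groups of central type (the CFSG-dependent theorem of Howlett--Isaacs). Hence $E(G)=1$ and $F^*(G)=O_p(G)$. I expect the delicate point to be treating nontrivial central characters $\mu$ — the genuinely projective case — cleanly, which is exactly where the central-type input is indispensable.
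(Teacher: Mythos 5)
Your items (i), (ii), (iii)(a), (iii)(b) and the claim that $|G|/\chi(1)$ is a $p$-power are proved exactly as in the paper (constancy on $a^Gb^G$ plus Lemma \ref{lem:product}; Kn\"{o}rr's Lemma \ref{lem: defect 0} for (ii); the decomposition $g=g_pg_{p'}$ for (iii)(a); and Theorem \ref{t:thmC} for (iii)(b)). For the remaining two claims of (iii)(c) the paper simply cites Lemma 1 and Theorem B of Riese--Schmid \cite{RS}, so there you are attempting a self-contained proof. Your argument for $\chi=\lambda^G$ is correct: $[\chi_V,\chi_V]=|G:V|$ gives $e^2=|T:V|$, comparing $p'$-parts in $\chi(1)=es\theta(1)$ forces $e_{p'}=1$, hence $T:=I_G(\theta_1)$ is a $p$-group, and the Clifford correspondent induces irreducibly through any $P\in\Syl_p(G)$ containing $T$. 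Your proof that $O_{p'}(G)=1$ is also correct.

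The gap is in the final step, ruling out $E(G)\neq 1$. Your Cauchy--Schwarz computation (combined with Clifford's theorem, which is what actually gives equal degrees --- Cauchy--Schwarz alone only yields $e_\theta$ proportional to $\theta(1)$ on $\Irr(A\mid\mu)$) shows that all members of $\Irr(A\mid\mu)$ occur in $\chi_A$ and share a common degree $d$. But this does \emph{not} make $A/Z(A)$ a group of central type. Central type requires a \emph{fully ramified} character, i.e.\ $|\Irr(A\mid\mu)|=1$ and $d^2=|A:Z(A)|$, whereas your conclusion only gives $nd^2=|A:Z(A)|$ with $n=|\Irr(A\mid\mu)|$ possibly larger than $1$ (any abelian group over a proper central subgroup already has all degrees over $\mu$ equal without being of central type). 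So Howlett--Isaacs does not apply as you invoke it; what you would actually need is the strictly stronger statement that a quasisimple group never has all its irreducible character degrees over a fixed central character equal --- a true but nontrivial fact that is not a consequence of the solvability of central type groups, and which you neither prove nor cite.

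Fortunately, your own intermediate work closes the gap with no projective character theory at all. You have already shown that $T=I_G(\theta_1)$ is a $p$-group, where $\theta_1$ is a constituent of $\chi_V$, $V=O_p(G)$. Since $[E(G),F(G)]=1$, every element of $E(G)$ centralizes $V$ and therefore fixes every irreducible character of $V$; thus $E(G)\leq C_G(V)\leq T$. Hence $E(G)$ is a perfect $p$-group, so $E(G)=1$ (the same containment $O_{p'}(G)\leq C_G(V)\leq T$ also gives $O_{p'}(G)=1$ at once, replacing your kernel argument). Consequently $C_G(O_p(G))\leq O_p(G)$ and $F^*(G)=F(G)E(G)=O_p(G)$, as required.
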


\begin{proof}

Recall that $\chi(xy)=\chi(x)\chi(y)$ for all $1\neq x,y\in G$ with $(o(x),o(y))=1.$

(i) Let $a,b\in G$ be nontrivial with $(o(a),o(b))=1$. Let $A=a^G$ and $B=b^G.$ Then for any $c\in AB$, $c=a^ub^v$ for some $u,v\in G.$ As $o(a^u)=o(a)$ and $o(b^v)=o(b)$ and $\chi\in\Irr(G)$ is a class function on $G$, we see that $$\chi(c)=\chi(a^ub^v)=\chi(a^u)\chi(b^v)=\chi(a)\chi(b).$$  Thus $\chi$ is constant on $AB$ and so by Lemma \ref{lem:product}, $\chi(ab)\chi(1)=\chi(a)\chi(b)$ which implies that $\chi(ab)\chi(1)=\chi(ab)$. As $\chi(1)>1$ ($\chi$ is nonlinear),  $\chi(ab)=0$, hence $\chi(a)\chi(b)=\chi(ab)=0$ and (1) follows.

(ii) Assume by contradiction that $\chi$ vanishes on every element of prime order in $G$. Then by Lemma \ref{lem: defect 0}, $\chi$ has $r$-defect zero, that is, $\chi(1)_r=|G|_r$,  for every prime divisor $r$ of $|G|$, the order of $G$. However, this would imply that $\chi(1)=|G|$, which is impossible as $\chi(1)^2<|G|$. Therefore, $\chi$ does not vanish on some element, say $w$, of order $p$, for some prime $p.$ 

(iii)(a) Suppose that $1\neq g\in G$ is not a $p$-element. Then $g$ is either a $p'$-element or $g$ is $p$-singular but not a $p$-element. Assume first that $g$ is a $p'$-element. Then by part (i), we have $\chi(wg)=\chi(w)\chi(g)=0$. As $\chi(w)\neq 0$, $\chi(g)=0.$ Next, assume that $g$ is $p$-singular but not a $p$-element. Then $g=uv$, where both $u=g_p,v=g_{p'}$ are nontrivial elements of $G$ with $(o(u),o(v))=1$. Part (i) now implies that $\chi(g)=0.$ Thus $\chi(g)=0$ if $g\in G$ is not a $p$-element.

(iii)(b) Let $1\neq x\in G$ with $\chi(x)\neq 0$. It follows from part (iii)(a) that $x$ is a nontrivial $p$-element. Let $y\in G$ be a nontrivial $p$-element of $G$. If $xy=s$ is a nontrivial $p'$-element, then $x=sy^{-1}$ with both $s,y^{-1}$ nontrivial and $(o(s),o(y^{-1}))=1$ so that by part (i), we have $\chi(x)=\chi(sy^{-1})=0$, which is a contradiction. Therefore, $xy$ is $1$ or $p$-singular for every $p$-element $y\in G.$ Now by Theorem \ref{t:thmC}, $x\in  O_p(G)$ and the results follows.

(iii)(c) For each prime divisor $r$ of $|G|$ with $r\neq p$, $\chi$ vanishes on every element of order $r$ and so by Lemma \ref{lem: defect 0}, $\chi(1)_r=|G|_r$ and thus $\chi(1)_{p'}=|G|_{p'}$ or equivalently $|G|/\chi(1)$ is a  power of $p.$ The remaining claims follow from  Lemma 1 and Theorem B in \cite{RS}.
\end{proof}

We now prove the main result of this section, answering a question raised in \cite{GM}.

\begin{thm}\label{th:app} Let $G$ be a finite group. Let $\chi$ be a nonlinear  irreducible character of $G$. Then $\chi$ is multiplicative if and only if there is a prime $p$ such that $\chi$ vanishes off  $  O_p(G)$.
\end{thm}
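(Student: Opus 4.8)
The plan is to prove the two implications separately, noting up front that the substantive direction has essentially already been established in Theorem \ref{th:p-core}, so the work here is mostly packaging plus one elementary verification.

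First I would handle the reverse implication. Suppose $\chi$ vanishes off $N := O_p(G)$ for some prime $p$; I want to verify the defining identity $\chi(xy)=\chi(x)\chi(y)$ for all nontrivial $x,y\in G$ with $(o(x),o(y))=1$. Since the two orders are coprime and both elements are nontrivial, at most one of $x,y$ can be a $p$-element, so at least one of them, say $x$, is not a $p$-element and hence lies outside the $p$-group $N$. Thus $\chi(x)=0$ and the right-hand side vanishes. It then remains to show $\chi(xy)=0$, for which it suffices to prove $xy\notin N$. Observe first that $xy\neq 1$, since $xy=1$ would force $o(x)=o(y)$, contradicting coprimality. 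Passing to $\bar G:=G/N$, if $xy\in N$ then $\bar x=\bar y^{-1}$, so $o(\bar x)=o(\bar y)$; but these orders divide the coprime integers $o(x),o(y)$, forcing $\bar x=1$ and hence $x\in N$, a contradiction. Therefore $xy\notin N$, so $\chi(xy)=0=\chi(x)\chi(y)$, and $\chi$ is multiplicative.

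For the forward implication, suppose $\chi$ is nonlinear and multiplicative. By Theorem \ref{th:p-core}(ii) there is a prime $p$ and an element $w\in G$ of order $p$ with $\chi(w)\neq 0$. For this prime $p$, Theorem \ref{th:p-core}(iii)(b) states precisely that $\chi$ vanishes off $O_p(G)$, which is the desired conclusion.

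The main obstacle is therefore not located in this short argument but in Theorem \ref{th:p-core}, whose proof of part (iii)(b) rests on the Baer--Suzuki type result Theorem \ref{t:thmC}: one shows that any $x$ with $\chi(x)\neq 0$ must be a $p$-element with the property that $xy$ is trivial or $p$-singular for every $p$-element $y$, and then concludes $x\in O_p(G)$. Modulo the earlier structural theorems, then, the present statement is an immediate consequence of Theorem \ref{th:p-core}, and the only genuinely new (though entirely routine) verification is the elementary quotient argument establishing the reverse direction above.
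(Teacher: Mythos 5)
Your proof is correct and follows essentially the same route as the paper: the forward direction is quoted from Theorem \ref{th:p-core}(ii) and (iii)(b) (your explicit appeal to (ii) to produce the prime $p$ is a welcome clarification of the paper's terser citation), and the reverse direction is the same elementary argument, passing to $G/O_p(G)$ and using coprimality of orders to show $xy\notin O_p(G)$, hence $\chi(xy)=0=\chi(x)\chi(y)$. The only cosmetic difference is that the paper splits into cases according to whether $\chi(xy)=0$, while you prove $xy\notin O_p(G)$ directly; these are logically equivalent.
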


\begin{proof}Assume first that $\chi\in \Irr(G)$ is nonlinear multiplicative. By Theorem \ref{th:p-core}(iii)(b), $\chi$ vanishes off $  O_p(G)$. 
Conversely, assume that $\chi$ vanishes off $  O_p(G)$. We claim that $\chi$ is multiplicative. Let $x,y\in G$ be nontrivial elements with $(o(x),o(y))=1.$ Let $z=xy.$ Since $x$ and $y$ have coprime orders, we may assume that $p\nmid o(x)$. It follows that $x\not\in  O_p(G)$ and thus $\chi(x)=0$. Now, if $\chi(z)=0$, then $\chi(xy)=\chi(z)=0=\chi(x)\chi(y)$. Thus we may assume that $\chi(z)\neq 0$, hence $z\in  O_p(G).$ In the quotient group $\overline{G}=G/  O_p(G)$, we see that $\overline{x}\overline{y}=\overline{1}$ and hence $\overline{y}=\overline{x}^{-1}$. So $o(\overline{y})=o(\overline{x})>1$, which is impossible.
\end{proof}


\end{document}